\documentclass[11pt]{amsart}
\usepackage[letterpaper,margin=1in]{geometry}
\usepackage[T1]{fontenc}
\usepackage{lmodern}
\usepackage{amsmath,amssymb,amsthm,mathtools}

\usepackage{booktabs,array}
\usepackage{microtype}
\usepackage{needspace,etoolbox}
\usepackage[colorlinks=true,linkcolor=black,citecolor=black,urlcolor=blue]{hyperref}

\hypersetup{
  pdftitle={Complete self-shrinkers with constant norm of the second fundamental form in R4},
  pdfauthor={Yu Fu and Dan Yang},
  pdfsubject={Three-dimensional self-shrinkers and the Cheng--Wei conjecture},
  pdfkeywords={self-shrinker, second fundamental form, rigidity, Bakry-Emery Ricci tensor}
}
\allowdisplaybreaks[1]
\AtBeginDocument{%
  \setlength{\abovedisplayskip}{8pt plus 2pt minus 2pt}%
  \setlength{\belowdisplayskip}{8pt plus 2pt minus 2pt}%
  \setlength{\abovedisplayshortskip}{3pt plus 2pt minus 1pt}%
  \setlength{\belowdisplayshortskip}{6pt plus 2pt minus 1pt}%
  \clubpenalty=10000
  \widowpenalty=10000
  \displaywidowpenalty=10000
}
\BeforeBeginEnvironment{lemma}{\Needspace{4\baselineskip}}
\BeforeBeginEnvironment{proposition}{\Needspace{5\baselineskip}}
\BeforeBeginEnvironment{theorem}{\Needspace{5\baselineskip}}
\BeforeBeginEnvironment{conjecture}{\Needspace{5\baselineskip}}
\pretocmd{\subsection}{\Needspace{5\baselineskip}}{}{}
\numberwithin{equation}{section}

\newtheorem{theorem}{Theorem}[section]
\newtheorem{proposition}{Proposition}[section]
\newtheorem{lemma}{Lemma}[section]

\newtheorem{remark}{Remark}[section]
\newtheorem{conjecture}{Conjecture}[section]

\newcommand{\RR}{\mathbb R}
\newcommand{\SSS}{\mathbb S}

\newcommand{\tr}{\operatorname{tr}}

\title[Complete self-shrinkers in \(\RR^4\)]{Complete self-shrinkers with constant norm\\ of the second fundamental form in \(\RR^4\)}
\author[Y. Fu and D. Yang]{Yu Fu and Dan Yang}
\subjclass[2020]{Primary 53E10; Secondary 53C42}
\keywords{Self-shrinker, Mean curvature flow, Gap results,
     Rigidity}
\date{}

\begin{document}

    \begin{abstract}
        Let \(X:M^3\to\RR^4\) be a complete self-shrinker with constant
        squared norm \(S\) of the second fundamental form. We prove that
        \(S=0\) or \(S=1\), and that \(X(M)\) is isometric to
        a hyperplane or a standard product
        \(\SSS^k(\sqrt{k})\times\RR^{3-k}\), \(1\leq k\leq3\).
        This proves a conjecture proposed by Cheng and Wei for three-dimensional hypersurfaces.
    \end{abstract}
    \maketitle

\section{Introduction}\label{sec:introduction}

Let \(X_t:M^n\to\RR^{n+1}\) be a one-parameter family of smooth
immersions. The family satisfies the mean curvature flow if
$\partial X_t/\partial t=\vec H_t$, where $\vec H_t$ denotes the
mean curvature vector of $X_t$.
Self-shrinkers play a fundamental role in the study of singularity formation in mean curvature flow, as they arise as models of singularities and provide key insights into the geometric behavior of the flow \cite{Huisken,CM}.

An immersed hypersurface \(X:M^n\to\RR^{n+1}\) is called a
self-shrinker if
\begin{equation}\label{eq:vector-shrinker}
    \vec H+X^\perp=0,
\end{equation}
where \(X^\perp\) denotes the normal component of the position vector.
For a local unit normal \(\xi\), we use \(\bar\nabla_v\xi=-Av\), where
\(\bar\nabla\) is the Euclidean connection and \(A\) is the shape operator.
Since \(H=\tr A\) and \(\vec H=H\xi\), equation
\eqref{eq:vector-shrinker} becomes
\begin{equation*}
    H+\langle X,\xi\rangle=0.
\end{equation*}
We denote by \(S=|A|^2\) the squared norm of the second fundamental
form and by \(R=H^2-S\) the scalar curvature.

The hyperplane through the origin, the round sphere
\(\SSS^n(\sqrt n)\), and the cylinders \(\SSS^k(\sqrt k)\times\RR^{n-k}\), \(1\leq k\leq n-1\),
are self-shrinkers with constant \(S\). We have \(S=0\) for the
hyperplane and \(S=1\) for the sphere and cylinders.
Abresch--Langer \cite{AL} classified closed self-shrinking curves
in \(\RR^2\).
Brendle \cite{Brendle} proved that a compact embedded self-shrinker
in \(\RR^3\) of genus zero is a round sphere.
Drugan \cite{Drugan} constructed an immersed, nonembedded
\(\SSS^2\) self-shrinker in \(\RR^3\).

Some important gap and rigidity results for self-shrinkers were obtained by Cao--Li
\cite{CaoLi}, Cheng--Peng \cite{CP}, Ding--Xin \cite{DX}, and
Cheng--Wei \cite{CWgap}.
Li--Wei \cite{LWrigidity} proved classification and rigidity theorems
in higher codimension.
Ding \cite{DingRigidity} obtained rigidity under an integral
curvature condition. These results establish conditions under which self-shrinkers must have the standard geometric models, providing important criteria for their classification.
Cheng--Wei proposed the following conjecture, which is stated in
\cite{CWY}.

\Needspace{8\baselineskip}
\begin{conjecture}\label{conj:Cheng-Wei}
    Let \(X:M^n\to\RR^{n+1}\) be a complete self-shrinker with
    constant squared norm of the second fundamental form. Then the
    self-shrinker $X(M^n)$ is isometric to a hyperplane \(\RR^n\) through the
    origin, the round sphere \(\SSS^n(\sqrt n)\) centered at the
    origin, or a cylinder
\[
  \SSS^k(\sqrt k)\times\RR^{n-k},\qquad 1\leq k\leq n-1.
\]
In particular, \(S=0\) or \(S=1\).
\end{conjecture}

For two-dimensional complete self-shrinkers with constant \(S\),
Ding--Xin \cite{DX} obtained the classification under polynomial
volume growth. Cheng--Ogata \cite{CO} removed this assumption.
Thus the conjecture holds for \(n=2\).
For \(n=3\), Cheng--Li--Wei \cite{CLWf4} proved the
conjecture under the additional assumption that \(f_4=\tr A^4\) is constant. The same authors \cite{CLWf3} obtained the classification under the assumption that \(f_3=\tr A^3\) is constant.

In arbitrary dimension, Cheng--Wei--Yano \cite{CWY} obtained a
second-gap theorem for complete self-shrinkers with constant \(S\) and \(f_3\).
Cheng--Li--Wei \cite{CLWscalar} proved that \(S=0\) or \(S=1\)
when the constant \(S\) satisfies \(S\leq7/5\).
For other rigidity and gap results, see \cite{DXvolume, CL,FY,Rimoldi, LWvolume, Halldorsson}.

Guo \cite{Guo} proved a scalar curvature gap theorem for compact
self-shrinkers.
Luo--Sun--Yin \cite{LSY} obtained classification results for complete
self-shrinkers with nonnegative constant scalar curvature, assuming
polynomial volume growth in the positive case.
Cheng--Li--Wei \cite{CLW} also classified complete self-shrinkers
with positive constant scalar curvature. Since
\(R=H^2-S\), the constancy of \(R\) differs from the constancy
of \(S\). In fact, when \(S>1\) is constant, the
\(\mathcal L\)-maximum principle gives \(\inf_M R=-S\), as
shown in Lemma~\ref{lem:weighted-geometry}. Hence a positive lower
bound for \(R\) cannot be assumed in this case.

In this paper, we study complete three-dimensional self-shrinkers
with constant norm of the second fundamental form. We remove the assumption
that either \(f_3\) or \(f_4\) is constant and obtain the
following theorem.

\begin{theorem}\label{thm:main}
    Let \(X:M^3\to\RR^4\) be a connected complete immersed
    self-shrinking hypersurface without boundary, with constant
    squared norm \(S\) of the second fundamental form. Then
    either \(S\equiv0\) and \(X(M^3)\) is isometric to
    a hyperplane through the origin, or \(S\equiv1\) and
    \(X(M)\) is isometric to
    \(\SSS^k(\sqrt{k})\times\RR^{3-k}\) for some
    \(k\in\{1,2,3\}\).
\end{theorem}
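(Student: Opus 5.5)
We will follow the approach of Cheng--Li--Wei, working in the weighted setting of the operator \(\mathcal L=\Delta-\langle X,\nabla\cdot\rangle\) and using the generalized maximum principle. The key point is that it holds on any complete self-shrinker with bounded \(S\), without polynomial volume growth, because the Bakry--Émery Ricci tensor \(\mathrm{Ric}_f=\mathrm{Ric}+\tfrac12\mathrm{Hess}|X|^2\) is bounded below when \(|A|^2\) is bounded.

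\textbf{Basic identities.} We start from
\[
\tfrac12\mathcal L S=|\nabla A|^2+S(1-S),\qquad \mathcal L H=H(1-S),
\]
\[
\tfrac12\mathcal L|\nabla A|^2=|\nabla^2A|^2+(2-S)|\nabla A|^2+\text{(cubic terms in }A,\nabla A),
\]
together with the corresponding equations for \(f_3=\tr A^3\) and \(f_4=\tr A^4\). Since \(S\) is constant, the first identity gives \(|\nabla A|^2=S(S-1)\). Hence either \(S=0\); or \(S=1\), \(\nabla A\equiv0\), and the classical Lawson-type classification of isoparametric self-shrinkers gives the cylinders; or \(S>1\) and \(|\nabla A|^2\equiv S(S-1)>0\). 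We assume \(S>1\) and seek a contradiction.

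\textbf{Maximum principle at extremal points of \(H\).} Apply the \(\mathcal L\)-maximum principle to \(H\) (and to \(H^2\), \(f_3\)) along sequences \(p_m\) with \(H(p_m)\to\sup H\) or \(\inf H\). At such points \(\nabla H\to0\), \(\limsup \mathcal L H\le 0\), and all quantities \(A,\nabla A,\nabla^2A\) converge after passing to subsequences. As in Lemma~\ref{lem:weighted-geometry}, one obtains \(\sup H^2=S\) or the limit point has \(H=0\); this is where \(\inf R=-S\) comes from. At the limit we then have three pieces of information: the algebraic constraints from \(\nabla_i S=0\), namely \(\sum_{j,k} h_{jk}h_{jki}=0\); the vanishing of \(\nabla H\); and the differentiated identity \(\sum h_{ijk}^2=S(S-1)\) together with its Laplacian \(\sum h_{ijk}h_{ijkl}=0\) and \(\sum h_{ijkl}^2+\sum h_{ijk}\Delta h_{ijk}=0\). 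Diagonalize \(A=\mathrm{diag}(\lambda_1,\lambda_2,\lambda_3)\) at the limit. In dimension 3 the tensor \(h_{ijk}\) is totally symmetric with 10 components, and there are only a few symmetric polynomial relations, so the system is finite-dimensional.

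\textbf{Main obstacle: excluding \(S>1\) in the limit system.} We must show the limiting algebraic system has no solution with \(S>1\), and also handle \(f_3\) and \(f_4\) without assuming they are constant. Our plan is to combine two inputs. First, the Ricci identities give \(h_{ijkl}-h_{ijlk}\) in terms of \(\lambda\)'s. Second, the Simons-type identity \(\tfrac12\mathcal L|\nabla A|^2=0\) yields
\[
|\nabla^2A|^2=(S-2)S(S-1)+3\,(\text{cubic terms}).
\]
We will estimate the cubic terms by \(C\sqrt S\,|\nabla A|^2\), using a sharp Okumura-type inequality in dimension 3, and bound \(|\nabla^2A|^2\) from below by the part forced by \(\nabla_k(\sum h_{ij}h_{ijl})=0\), i.e. \(\sum h_{ij}h_{ijkl}=-\sum h_{ijk}h_{ijl}\). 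This forces \(|\nabla^2 A|^2\ge |\nabla A|^4/S\)-type bounds. Carried out at the points where \(f_3\) attains its sup and inf (applying the \(\mathcal L\)-maximum principle to \(f_3\), whose \(\mathcal L\)-equation involves \(f_4\) and \(\nabla A\)), we expect to obtain incompatible inequalities unless \(S\le 1\). If the direct estimate is not sharp, the fallback is a case analysis on the multiplicity of eigenvalues at the limit point (three distinct, two equal, \(H=0\)). This reduces to one-variable polynomial inequalities in \(S\), which we would verify have no root in \((1,\infty)\), extending the range \(S\le 7/5\) of \cite{CLWscalar}.
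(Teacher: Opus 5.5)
\textbf{There is a genuine gap.} Your treatment of $S=0$ and $S=1$ is fine, but the only hard case, the exclusion of $S>1$, is announced (``we expect to obtain incompatible inequalities'') rather than proved, and the tools you list cannot deliver it.

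\emph{Your limit points carry too little information.} At the extrema of $H$ you do get $\nabla H\to0$. But the $\mathcal L$-condition there reads $H(1-S)\le0$ (resp.\ $\ge0$), which is vacuous once $H$ changes sign. So the alternative ``$\sup H^2=S$'' has no basis. Along a sequence realizing $\inf H^2=0$, which is what Lemma~\ref{lem:weighted-geometry} actually gives, the limit has $H=0$, so the principal curvatures have mixed signs. There you do not even get $\nabla H\to0$, since $\nabla(H^2)=2H\nabla H\to0$ automatically. Nor do you get any second-order trace data. The formula for $H_{kl}$ contains $\sum_j h_{klj}\langle X,e_j\rangle$, which is uncontrolled unless $X^\top\to0$.

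\emph{The inequality chain does not close.} The Bochner term $3\sum(\lambda_i^2-2\lambda_i\lambda_j)h_{ijk}^2$ is quadratic in the $\lambda_i$, so the natural bound is $cS|\nabla A|^2$, not $c\sqrt S|\nabla A|^2$. More importantly, a contradiction needs this term to be sufficiently \emph{negative}, not merely bounded. The Cauchy--Schwarz bound from $\sum_i\lambda_ih_{iikl}=-Q_{kl}$ gives only $|\nabla^2A|^2\ge S(S-1)^2/3$. This is already below $S(S-1)(S-2)$ for $S>5/2$. With eigenvalues of both signs the Bochner term can be positive, so nothing is contradicted. This is why estimates of this type stop at thresholds like $S\le7/5$. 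Finally, the limiting system does not reduce to one-variable inequalities in $S$: the eigenvalues and the components of $\nabla A,\nabla^2A$ remain free.

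\textbf{What the paper does instead.} You need a limit point that fixes the signs of the eigenvalues and supplies second-order trace data, plus an identity sharp enough to exploit them.
\begin{itemize}
\item The paper first proves $|X|$ is bounded (Proposition~\ref{prop:bounded}). It then applies Omori's Hessian maximum principle to $|X|^2/2$.
\item At the resulting limit $X^\top=0$. This yields $\sum_ih_{iik}=\sum_i\lambda_ih_{iik}=0$ and, crucially, $\sum_ih_{iikl}=(\lambda_k-H\lambda_k^2)\delta_{kl}$.
\item The bound $\nabla^2(|X|^2/2)\le\varepsilon g$ gives $H\lambda_i\ge1$, so all limiting principal curvatures are positive.
\item The paper adds $H/2$ times the Bochner identity to the further identity \eqref{17}. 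That identity comes from a third derivative of the constancy of $S$ together with $\sum_l\lambda_l\nabla_l\nabla_l|\nabla A|^2=0$. The combination is $J=0$, whose fourth-order part carries the positive weights $\lambda_l+H/2$.
\item It splits $h_{ijkl}$ into the explicit Ricci-commutator correction plus a fully symmetric tensor. It then minimizes $J$ exactly under the trace constraints and proves the minimum is positive for nonscalar $A$; the scalar case is immediate from Bochner.
\end{itemize}
Without positivity of the eigenvalues and the second-order trace identity, there is no reason to expect your limiting system to be inconsistent.
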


\begin{remark}
	Theorem~\ref{thm:main} gives the classification in
Conjecture~\ref{conj:Cheng-Wei} for \(n=3\).
We do not require properness or polynomial volume growth in
Theorem~\ref{thm:main}. For complete immersed self-shrinkers,
properness, polynomial volume growth, and finiteness of Gaussian
volume are equivalent \cite{CZ}.
\end{remark}
To prove Theorem~\ref{thm:main}, we first observe that the Simons
identity gives $\sum_{i,j,k}h_{ijk}^2=S(S-1)$. Since the left-hand
side is nonnegative, it follows that $S=0$ or $S\geq1$. We therefore
need only exclude $S>1$. Using the derivative identities in
Section~\ref{sec:identities}, we first obtain uniform bounds for
\(h_{ijkl}\). We then prove that \(|X|\) is bounded. Applying
the Hessian maximum principle to \(f=|X|^2/2\), we obtain limiting
components satisfying \(H\lambda_i\geq1\) for every \(i\).
After changing the normal if necessary, the limiting principal
curvatures are all positive.

We solve the trace equations and express the remaining components of
$h_{ijkl}$ in terms of four free variables. Substitution into a linear
combination of the two Bochner identities yields a quadratic expression.
We use the trace equations to eliminate the constrained components and minimize the resulting quadratic expression. The remaining positivity estimates are established separately for distinct and repeated principal curvatures.
For distinct principal curvatures, a concavity argument
gives the required boundary estimate. Eight remaining coefficient
inequalities are verified by exact rational arithmetic. The
polynomial formulas are given in Appendix~\ref{app:polynomials}.

The paper is organized as follows.
In Section~\ref{sec:identities}, we establish the identities
for the second fundamental form and its covariant derivatives.
In Section~\ref{sec:global}, we obtain the derivative bounds
and the limiting relations used in the proof.
Section~\ref{sec:algebra} is devoted to the algebraic inequality.
In Section~\ref{sec:classification}, we complete the proof
of Theorem~\ref{thm:main}.
The polynomial coefficient formulas and a rational calculation
are given in Appendix~\ref{app:polynomials}.

\section{Preliminaries}\label{sec:identities}
Let $X:M^3\to\mathbb R^4$ be a connected complete immersed self-shrinker
with constant norm of the second fundamental form. We denote by
$\bar\nabla$ the Euclidean connection on $\mathbb R^4$ and by $\nabla$ the
Levi-Civita connection on $M$. If $M$ is not orientable, we pass to its
oriented double cover and lift the immersion. The lifted metric is
complete, the self-shrinker equation and the value of $S$ are unchanged,
and the immersed image is the same. We may therefore assume that $M$
is orientable and fix a global unit normal vector field $\xi$.
For tangent vector fields $Y,Z\in TM$, the shape operator $A$ is defined by
$
\bar\nabla_Y\xi=-AY,
$
and the second fundamental form is given by
$
h(Y,Z)=\langle AY,Z\rangle.$
Let $\{e_1,e_2,e_3\}$ be a local orthonormal frame on $M$. We write
\[
h_{ij}=h(e_i,e_j),\qquad
H=\sum_i h_{ii},\qquad
S=|A|^2=\sum_{i,j}h_{ij}^2.
\]
We denote the components of $\nabla h$ and $\nabla^2h$ by $h_{ijk}$ and
$h_{ijkl}$, respectively. The Gauss and Codazzi equations are
\[
  R_{ijkl}=h_{il}h_{jk}-h_{ik}h_{jl}, \qquad h_{ijk}=h_{ikj}.
\]
In particular, $h_{ijk}$ is symmetric in all three indices, and
$h_{ijkl}$ is symmetric in the first three indices. We use
$h_{ijkl}=(\nabla_l\nabla h)_{ijk}$ and the same order for higher
covariant derivatives. With the Gauss convention above, the Ricci identities are
\begin{equation}\label{eq:ricci-general}
\begin{aligned}
h_{ijkl}-h_{ijlk}
 &=-\sum_m\bigl(h_{mj}R_{mikl}+h_{im}R_{mjkl}\bigr),\\
h_{ijklm}-h_{ijkml}
 &=-\sum_r\bigl(h_{rjk}R_{rilm}+h_{irk}R_{rjlm}+h_{ijr}R_{rklm}\bigr).
\end{aligned}
\end{equation}

We use the drift Laplacian
$
\mathcal L=\Delta-\langle X,\nabla\cdot\rangle,
$
where $\Delta$ denotes the Laplacian.
Differentiating the self-shrinker equation yields
\begin{equation}\label{eq:Hderivatives}
\begin{aligned}
  H_i&=\sum_j h_{ij}\langle X,e_j\rangle,\\
  H_{ij}&=\sum_k h_{ijk}\langle X,e_k\rangle
           +h_{ij}-H\sum_k h_{ik}h_{kj}.
\end{aligned}
\end{equation}
The Simons identities for a self-shrinker are given by
\begin{equation}
\label{eq:Simons}
\mathcal L h_{ij}=(1-S)h_{ij},\qquad
\frac12\mathcal L S
=\sum_{i,j,k}h_{ijk}^2+S(1-S).
\end{equation}
These formulas are given in \cite{CP}. Since $S$ is constant, it follows that
\begin{equation}
\label{eq:nablah}
\sum_{i,j,k}h_{ijk}^2=S(S-1).
\end{equation}
In particular, either $S=0$ or $S\geq1$.

Let \(f=\frac{|X|^2}{2}\).
Then
$
f_i=\langle X,e_i\rangle.
$
Using the Gauss formula and the self-shrinker equation, we obtain
\[
  f_{ij} = \delta_{ij}+\langle X,\xi\rangle h_{ij} = \delta_{ij}-Hh_{ij}.
\]
By the Gauss equation,
\[
  R_{ij} = Hh_{ij}-\sum_k h_{ik}h_{kj}, \qquad R=H^2-S.
\]
Hence
\begin{equation}
\label{eq:Ricf}
(\operatorname{Ric}_f)_{ij}
:=R_{ij}+f_{ij}
=
\delta_{ij}-\sum_k h_{ik}h_{kj}.
\end{equation}
At a fixed point, choose an orthonormal frame diagonalizing $A$, so that
$h_{ij}=\lambda_i\delta_{ij}$, and
\[
  (\operatorname{Ric}_f)_{ij} = (1-\lambda_i^2)\delta_{ij}.
\]

\begin{lemma}\label{lem:weighted-geometry}
Let $X:M^3\to\mathbb R^4$ be a complete self-shrinker with constant
$S>1$. Then
\[
  \inf_M H^2=0,\qquad \inf_M R=-S.
\]
\end{lemma}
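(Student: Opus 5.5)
Since \(R=H^2-S\) and \(S\) is constant, the identity \(\inf_M R=-S\) is equivalent to \(\inf_M H^2=0\). So the plan is to prove only \(\inf_M H^2=0\), using a generalized (Omori--Yau type) maximum principle for the drift Laplacian \(\mathcal L\) applied to \(H^2\). The key input is the trace of the Simons identity \eqref{eq:Simons}. Summing \(\mathcal L h_{ii}=(1-S)h_{ii}\) over \(i\) gives \(\mathcal L H=(1-S)H\), and hence
\[
  \tfrac12\mathcal L H^2=|\nabla H|^2+(1-S)H^2 .
\]

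\textbf{Step 1: the maximum principle applies.} By \eqref{eq:Ricf}, \(\operatorname{Ric}_f=\delta_{ij}-\sum_k h_{ik}h_{kj}\ge (1-S)\,g\), so the Bakry--\'Emery Ricci tensor is bounded below by a constant. For complete self-shrinkers this is exactly the hypothesis under which the \(\mathcal L\)-version of the Omori--Yau maximum principle holds, as in Cheng--Peng \cite{CP}. It applies to any smooth function bounded from below.

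\textbf{Step 2: apply it to \(u=H^2\).} The function \(u\) is bounded, since \(0\le H^2\le 3S\). The principle yields points \(p_k\) with
\[
  u(p_k)\to\inf_M u,\qquad |\nabla u|(p_k)\to0,\qquad \mathcal L u(p_k)\ge -\tfrac1k .
\]

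\textbf{Step 3: argue by contradiction.} Suppose \(c:=\inf_M H^2>0\). Then \(|H|\ge\sqrt c\) everywhere. From \(|\nabla u|=2|H||\nabla H|\) we get \(|\nabla H|(p_k)\le |\nabla u|(p_k)/(2\sqrt c)\to0\). Evaluating the displayed identity at \(p_k\) gives
\[
  (S-1)H^2(p_k)\le |\nabla H|^2(p_k)+\tfrac1{2k}.
\]
Letting \(k\to\infty\) yields \((S-1)c\le0\), which contradicts \(S>1\) and \(c>0\). Hence \(\inf_M H^2=0\), and therefore \(\inf_M R=\inf_M H^2-S=-S\).

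\textbf{Main obstacle.} The delicate point is justifying the \(\mathcal L\)-Omori--Yau principle, because the drift term \(\langle X,\nabla\cdot\rangle\) is unbounded. I would invoke the known result for self-shrinkers whose Bakry--\'Emery Ricci curvature is bounded below. If needed, I would instead rederive it by the standard Omori--Yau argument with exhaustion function \(f=|X|^2/2\): on a self-shrinker, \(\mathcal L f=n-|X|^2=3-2f\), so \(f\) is an admissible exhaustion (assuming \(f\) is proper), and one then perturbs \(u\) by \(\varepsilon f\) and passes to the limit. A second, minor point is that \(|\nabla H|\to0\) must be extracted from \(|\nabla H^2|\to0\). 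This is exactly why the contradiction hypothesis \(c>0\) is used: it keeps \(|H|\) away from zero.
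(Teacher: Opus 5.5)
Your proof is correct and is essentially the paper's argument. Both combine the Cheng--Peng $\mathcal L$-maximum principle with the traced Simons identity $\mathcal L H=(1-S)H$. The paper first uses $\inf_M H^2>0$ and connectedness to normalize $H>0$, then applies the principle to $-H$, which needs only the Laplacian condition. You apply it to $H^2$ instead, which avoids the sign normalization but uses the gradient condition together with $|H|\geq\sqrt c$.

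One small correction: Cheng--Peng's hypothesis is a lower bound on the ordinary Ricci curvature. That bound holds here because $R_{ii}=H\lambda_i-\lambda_i^2$ is bounded when $S$ is constant. So your fallback via the exhaustion $|X|^2/2$ is not needed, and it would in any case require properness, which is not assumed.
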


\begin{proof}
Suppose that $\inf_M H^2>0$. After reversing the unit normal if necessary,
we may assume that $H>0$. Since $H^2\leq 3S$, we have \(0<\inf_M H\leq H\leq \sqrt{3S}\).
In a principal frame, the Gauss equation gives
$
R_{ii}=H\lambda_i-\lambda_i^2.
$
Since $|H|\leq\sqrt{3S}$ and $|\lambda_i|\leq\sqrt S$, the Ricci curvature of
$M$ is bounded from below. Applying the $\mathcal L$-maximum principle of
Cheng--Peng \cite{CP} to $-H$, we
obtain a sequence $\{p_m\}\subset M$ such that
\[
  H(p_m)\longrightarrow\inf_M H,\qquad \limsup_{m\to\infty}\mathcal L(-H)(p_m)\leq0.
\]

On the other hand, taking the trace of \eqref{eq:Simons} gives
\[
  \mathcal L(-H)=(S-1)H \geq(S-1)\inf_M H>0,
\]
a contradiction. Hence $\inf_M H^2=0$. Since $R=H^2-S$, we also obtain \(\inf_M R=-S\).
\end{proof}

\begin{remark}
By \eqref{eq:Ricf} and $\lambda_i^2\leq S$, we have \(\operatorname{Ric}_f\geq(1-S)g\).
When $S>1$, this does not provide a positive lower bound for $\operatorname{Ric}_f$.
Moreover, Lemma~\ref{lem:weighted-geometry} gives $\inf_M R=-S$, so the scalar curvature cannot have a
positive lower bound in the present case. We shall use the Hessian
maximum principle instead to obtain the inequalities $H\lambda_i\geq1$.
For comparison results for the Bakry--\'Emery Ricci tensor, see \cite{WeiWylie}.
\end{remark}

\begin{lemma}
\label{lem:bochner1}
Let $X:M^3\to\mathbb R^4$ be a complete self-shrinker with constant
$S$. Then
\begin{equation}
\label{eq:bochner1}
\sum_{i,j,k,l}h_{ijkl}^2
=
S(S-1)(S-2)
+
3\sum_{i,j,k}
(\lambda_i^2-2\lambda_i\lambda_j)h_{ijk}^2.
\end{equation}
\end{lemma}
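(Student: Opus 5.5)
Since $S$ is constant, \eqref{eq:nablah} says $\frac12|\nabla h|^2=\frac12 S(S-1)$ is constant. The plan is to compute $\frac12\mathcal L|\nabla h|^2$ in two ways and set it to zero:
\[
0=\tfrac12\mathcal L\sum h_{ijk}^2=\sum h_{ijkl}^2+\sum h_{ijk}\,\mathcal L h_{ijk}.
\]
This reduces the claim to computing $\sum_{i,j,k} h_{ijk}\mathcal L h_{ijk}$ at a point, in a frame diagonalizing $A$. Concretely, it suffices to show
\[
\sum h_{ijk}\mathcal L h_{ijk}=-S(S-1)(S-2)-3\sum(\lambda_i^2-2\lambda_i\lambda_j)h_{ijk}^2 .
\]

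To get $\mathcal L h_{ijk}$, differentiate the Simons identity $\mathcal L h_{ij}=(1-S)h_{ij}$ in the direction $e_k$. Since $S$ is constant this gives $(\mathcal L h_{ij})_{,k}=(1-S)h_{ijk}$. Then commute derivatives:
\[
\Delta h_{ijk}=\sum_l h_{ijlkl}=\sum_l h_{ijllk}+\text{(curvature terms)},
\]
using both lines of \eqref{eq:ricci-general} (the second for $h_{ijlkl}-h_{ijllk}$ after first using Codazzi symmetry, the first differentiated for the $k$-derivative of the $h_{ijl l}$ swap). For the drift term, use $\nabla_k\langle X,\nabla h_{ij}\rangle=\sum_l h_{ijlk}\langle X,e_l\rangle+\sum_l h_{ijl}(\delta_{lk}-Hh_{lk})$, which comes from $f_{lk}=\delta_{lk}-Hh_{lk}$. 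Combining these gives
\[
\mathcal L h_{ijk}=(1-S)h_{ijk}+h_{ijk}-H\sum_l h_{ijl}h_{lk}+\mathcal R_{ijk},
\]
where $\mathcal R_{ijk}$ collects the Ricci-identity terms. These are cubic in $h$ times $\nabla h$, plus terms of the form $h\cdot\nabla R$ with $\nabla R$ expressed through the Gauss equation.

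Next, contract with $h_{ijk}$ in the principal frame, using $R_{ijkl}=\lambda_i\lambda_j(\delta_{il}\delta_{jk}-\delta_{ik}\delta_{jl})$ and full symmetry of $h_{ijk}$. The $(2-S)\sum h_{ijk}^2$ term yields $-S(S-1)(S-2)$ after \eqref{eq:nablah}. The $-H\lambda_k h_{ijk}^2$ term has to cancel against the Ricci terms, whose $\sum_m R_{mk}$-type pieces contribute $H\lambda_k-\lambda_k^2$. What should remain is a combination $\sum(a\lambda_i^2+b\lambda_i\lambda_j)h_{ijk}^2$ after symmetrization over $i,j,k$.

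The derivative-of-curvature terms $h_{ij}\nabla_l R_{\cdot\cdot kl}$ produce expressions like $\sum \lambda_i h_{iik}\,\lambda\, h_{\cdot\cdot k}$. These must be rewritten using $\nabla_k S=2\sum_i\lambda_i h_{iik}=0$, which kills the terms involving $\sum_i\lambda_i h_{iik}$. The target coefficients are $-3$ and $+6$, so that moving to the left side gives \eqref{eq:bochner1}.

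I expect the main obstacle to be the bookkeeping of these third-order commutator terms: getting signs right with the stated convention $h_{ijkl}=(\nabla_l\nabla h)_{ijk}$, and symmetrizing so that everything collapses to the form $\lambda_i^2-2\lambda_i\lambda_j$. To keep this under control, I would first run the computation at the level of $\Delta|\nabla h|^2$ on a general hypersurface, then check the result against the cylinder, where $\nabla h=0$ and $S=1$ make both sides vanish. As a second sanity check, I would verify the constant term via the trace $\sum_i h_{iik}=H_k$.
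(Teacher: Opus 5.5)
Your route is the paper's. The paper simply quotes the standard self-shrinker Bochner formula, $|\nabla^2h|^2=\frac12\mathcal L|\nabla h|^2+(S-2)|\nabla h|^2+3\sum(\lambda_i^2-2\lambda_i\lambda_j)h_{ijk}^2+\frac32|\nabla S|^2$, and inserts $\nabla S=0$ and $|\nabla h|^2=S(S-1)$. You instead propose to rederive that formula from $0=|\nabla^2h|^2+\sum h_{ijk}\mathcal Lh_{ijk}$. That is legitimate, but your sketch of $\sum h_{ijk}\mathcal Lh_{ijk}$ has a genuine hole, exactly where the self-shrinker equation enters beyond the Simons identity.

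The first problem is the drift term. $\nabla_k\langle X,\nabla h_{ij}\rangle$ produces $\sum_l\langle X,e_l\rangle h_{ijlk}$, whereas $\mathcal Lh_{ijk}$ contains $\sum_l\langle X,e_l\rangle h_{ijkl}$. So $\mathcal Lh_{ijk}$ carries an extra term $\sum_l\langle X,e_l\rangle(h_{ijlk}-h_{ijkl})$. By the first line of \eqref{eq:ricci-general}, this is $X^\top$ times a cubic expression in $h$. Your displayed formula for $\mathcal Lh_{ijk}$ omits it, and so does your description of $\mathcal R_{ijk}$ (``$\nabla h$ terms plus $h\cdot\nabla R$''). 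In a principal frame, use $\sum_i\lambda_ih_{iij}=0$ and $H_i=\lambda_i\langle X,e_i\rangle$ from \eqref{eq:Hderivatives}. Then its contraction with $h_{ijk}$ equals $-2\sum_{i,j}H_i\lambda_j^2h_{jji}$.

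The second problem is your claim that $\nabla S=0$ disposes of the derivative-of-curvature terms; that claim is false. Note that $\Delta h_{ijk}=\sum_lh_{ijkll}$, not $\sum_lh_{ijlkl}$. The difference $\sum_l\nabla_l(h_{ijkl}-h_{ijlk})$ involves the divergence $\sum_l\nabla_lR_{mikl}$. By Codazzi, this divergence contains the traces $\sum_lh_{mll}=H_m$ and $\sum_lh_{ill}=H_i$. After contraction with $h_{ijk}$, these trace terms give $2\sum_{i,j}H_i\lambda_j^2h_{jji}-2\sum_{i,j}\lambda_i\lambda_jH_jh_{iij}$.

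The second sum vanishes because $\sum_i\lambda_ih_{iij}=\frac12S_j=0$. The first sum equals $\frac23\langle\nabla H,\nabla\operatorname{tr}A^3\rangle$, and $\nabla S=0$ does not kill it. It is cancelled exactly by the omitted drift commutator above. Only after that cancellation does the rest reduce to $(2-S)|\nabla h|^2-3\sum(\lambda_i^2-2\lambda_i\lambda_j)h_{ijk}^2$.

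Following your sketch literally, you would end with a spurious multiple of $\langle\nabla H,\nabla\operatorname{tr}A^3\rangle$ in \eqref{eq:bochner1}. Your cylinder check cannot detect this, since $\nabla h\equiv0$ there. The fix is to keep the drift commutator and rewrite it using $H_i=\sum_jh_{ij}\langle X,e_j\rangle$. The rest of your bookkeeping then goes through as you describe: the Ricci-tensor terms absorb $-H\lambda_kh_{ijk}^2$, the $\nabla S=0$ cancellations hold, and the final coefficients are $-3$ and $6$.
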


\begin{proof}
By the standard Bochner identity for self-shrinkers,
\[
|\nabla^2 h|^2
=
\frac12\mathcal L|\nabla h|^2
+(S-2)|\nabla h|^2
+3\sum_{i,j,k}
(\lambda_i^2-2\lambda_i\lambda_j)h_{ijk}^2
+\frac32|\nabla S|^2.
\]
See also \cite{LeiXuXu}.
Since $S$ is constant, $\nabla S=0$, and by \eqref{eq:nablah}, \(|\nabla h|^2=S(S-1)\),
which is also constant. Hence \(\mathcal L|\nabla h|^2=0\).
Substituting these identities into the preceding formula gives
\[
  |\nabla^2 h|^2 = S(S-1)(S-2) + 3\sum_{i,j,k} (\lambda_i^2-2\lambda_i\lambda_j)h_{ijk}^2,
\]
which yields \eqref{eq:bochner1}.
\end{proof}

At a fixed point, we work in a principal frame
$h_{ij}=\lambda_i\delta_{ij}$ and set
\[
f_3=\sum_i\lambda_i^3.
\]

\begin{lemma}
Let $X:M^3\to\mathbb R^4$ be a complete self-shrinker such that
\[
S=\sum_{i,j}h_{ij}^2
\qquad\text{and}\qquad
\sum_{i,j,k}h_{ijk}^2
\]
are constant. Then
\begin{equation}\label{17}
\sum_{i,j,k,l}\lambda_l h_{ijkl}^2
-3\sum_{i,j,k,a,b}h_{ijk}h_{abk}h_{ijab}
+\sum_{i,j,k}
\left(
3S\lambda_i+6\lambda_i^2\lambda_j-3\lambda_i^3-f_3
\right)h_{ijk}^2=0.
\end{equation}
\end{lemma}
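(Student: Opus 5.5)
This identity is the second Bochner-type identity, coming from the cubic invariant \(\sum_{i,j,k}\lambda_i h_{ijk}^2\), or more invariantly \(\sum h_{ij}h_{ikl}h_{jkl}\). The plan is to differentiate the constancy of \(|\nabla h|^2\) against the drift Laplacian twice over, weighted by \(A\).

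The first step is to compute \(\tfrac12\mathcal L\) of the tensor contraction \(F=\sum_{i,j,k,l}h_{ij}h_{ikl}h_{jkl}\) in two ways. By the product rule,
\[
\mathcal L F=\sum (\mathcal L h_{ij})h_{ikl}h_{jkl}+2\sum h_{ij}h_{ikl}\,\mathcal L h_{jkl}+2\sum h_{ijm}h_{iklm}h_{jkl}\cdot 2+2\sum h_{ij}h_{iklm}h_{jklm}.
\]
In a principal frame, the last term is \(2\sum\lambda_l h_{ijkl}^2\) after relabeling. The mixed term is \(4\sum h_{ijm}h_{ikl}h_{jklm}\), which by symmetry of \(h_{ijkl}\) in its first three indices becomes \(-\)(something)\(\times\sum h_{ijk}h_{abk}h_{ijab}\) up to relabeling; it will produce the \(-3\sum h_{ijk}h_{abk}h_{ijab}\) term once the normalization is fixed.

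The second step is to compute \(\mathcal L h_{jkl}\). Differentiate \(\mathcal L h_{ij}=(1-S)h_{ij}\) and commute derivatives using \eqref{eq:ricci-general} and the Gauss equation. Because of the drift term, one also needs \(\nabla_k\langle X,e_m\rangle=\delta_{km}-Hh_{km}\). This gives
\[
\mathcal L h_{ijk}=(2-S)h_{ijk}+(\text{cubic terms in }h\text{ times }h_{\cdot\cdot\cdot}),
\]
which is exactly the formula underlying Lemma~\ref{lem:bochner1}. Substituting it, the zeroth-order pieces combine with \(\mathcal L h_{ij}=(1-S)h_{ij}\) into terms of the form \(\sum\lambda_i h_{ijk}^2\) times constants, while the curvature pieces give \(\lambda_i^2\lambda_j\), \(\lambda_i^3\), \(S\lambda_i\) and \(f_3\) coefficients.

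The third step removes the left-hand side. One would like \(\mathcal L F=0\), but \(F\) is not constant. Instead, pair \(F\) with the identities obtained from \(\nabla|\nabla h|^2=0\) and \(\nabla^2|\nabla h|^2=0\). Concretely, differentiating \(\sum h_{ijk}^2=\text{const}\) twice gives
\[
\sum h_{ijk}h_{ijkl}=0,\qquad \sum h_{ijkm}h_{ijkl}+\sum h_{ijk}h_{ijklm}=0.
\]
Contracting the latter with \(h_{lm}\) gives
\[
\sum\lambda_l h_{ijkl}^2=-\sum h_{ijk}h_{ijkll'}h_{ll'}.
\]
The right-hand side is then expanded by commuting indices via \eqref{eq:ricci-general}, using the trace identity for \(\sum_l h_{ijkll}\) from \(\mathcal L h_{ijk}\). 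Similarly, the contraction \(\sum_l h_{ijk}h_{ijkl}\langle X,e_l\rangle\) vanishes, which kills the drift term. This route directly yields an identity of the form in \eqref{17}, and avoids needing \(\mathcal L F\).

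The main obstacle is the bookkeeping of the Ricci-identity terms when moving \(h_{ll'}\)-contracted indices into trace position. This involves third-order commutation, which generates both \(h_{ijk}h_{abk}h_{ijab}\) (from \(\nabla\) of the curvature, i.e. \(\nabla h\cdot h\)) and the quartic coefficients. The coefficient \(-3\) and the exact combination \(3S\lambda_i+6\lambda_i^2\lambda_j-3\lambda_i^3-f_3\) must be tracked carefully. I would verify them by checking the symmetrization over \((i,j,k)\) and testing on the cylinder \(\SSS^1\times\RR^2\), where \(h_{ijk}=0\), and on a generic frame.
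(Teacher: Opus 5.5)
There is a genuine gap in your third step. The reduction \(\sum_{i,j,k,l}\lambda_l h_{ijkl}^2=-\sum_{i,j,k,l}\lambda_l h_{ijk}h_{ijkll}\) is correct, and it is also the last step of the paper's proof. But the right-hand side is an \(A\)-weighted double trace of \(\nabla^3 h\), and none of the tools you list evaluates it.

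\begin{itemize}
\item The formula for \(\mathcal L h_{ijk}\) only controls the unweighted trace \(\sum_l h_{ijkll}\). That is the input for \eqref{eq:bochner1}, not for this identity.
\item The Ricci identities only permute the indices of \(h_{ijkll'}\). Each reordering costs terms of type \(\nabla R*h+R*\nabla h\), which by the Gauss equation are of the form \(h*h*\nabla h\). The contraction against \(h_{ll'}\) survives every reordering.
\item For the same reason, the term \(-3\sum h_{ijk}h_{abk}h_{ijab}\) cannot come from ``\(\nabla\) of the curvature''. Commutators never produce \(\nabla^2h\). After contraction with \(\lambda_a h_{ijk}\), they give only terms cubic in the \(\lambda_i\) times quadratic in \(h_{ijk}\).
\end{itemize}

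What is missing are the second- and third-order consequences of \(\nabla S=0\). These are exactly what controls a weighted trace, and your plan never uses them.

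The missing idea is to obtain the weighted trace from the constancy of \(S\). Differentiating \(\sum_{a,b}h_{ab}h_{abi}=0\) gives \(\sum_{a,b}h_{ab}h_{abij}=-\sum_{a,b}h_{abi}h_{abj}\). Commuting \(h_{abij}\) into \(h_{ijab}\) by Codazzi, Ricci and Gauss then yields
\[
\sum_{a,b}h_{ab}h_{ijab}=-\sum_{a,b}h_{abi}h_{abj}+S\sum_l h_{il}h_{lj}-f_3h_{ij}.
\]
Differentiate this in \(e_k\) and contract with \(h_{ijk}\). This gives \(\sum_{i,j,k,a}\lambda_a h_{ijk}h_{ijaak}=-3\sum h_{ijk}h_{abk}h_{ijab}+\sum_{i,j,k}(2S\lambda_i-f_3)h_{ijk}^2\). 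The \(-3\) is \(-1\) from the product rule on the left plus \(-2\) from differentiating \(-\sum h_{abi}h_{abj}\). The terms \(2S\lambda_i\) and \(f_3\) come from the first commutation.

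Only now do the third-order Ricci identities enter. They convert \(h_{ijaak}\) into \(h_{ijkaa}\), contributing \(\sum(S\lambda_i+6\lambda_i^2\lambda_j-3\lambda_i^3)h_{ijk}^2\). Combined with your step-three identity, this gives \eqref{17}.

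Note that the drift Laplacian, \(\mathcal L h_{ijk}\), and the self-shrinker equation play no role here. The identity holds for any hypersurface in \(\RR^4\) with \(S\) and \(|\nabla h|^2\) constant. Your proposed check on \(\SSS^1\times\RR^2\) is vacuous, since every term of \eqref{17} vanishes when \(h_{ijk}\equiv0\).
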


\begin{proof}
Since $S$ is constant, it follows that
$
  0=\frac12 S_i=\sum_{a,b}h_{ab}h_{abi}.
$
Differentiating with respect to $e_j$ gives
\[
  \sum_{a,b}h_{ab}h_{abij} = -\sum_{a,b}h_{abi}h_{abj}.
\]
Using the Codazzi equation, the Ricci identity, and the Gauss equation, we obtain
\begin{equation}
\label{eq:hijab}
\sum_{a,b}h_{ab}h_{ijab}
=
-\sum_{a,b}h_{abi}h_{abj}
+
S\sum_l h_{il}h_{lj}
-f_3h_{ij}.
\end{equation}

We now differentiate \eqref{eq:hijab} in the direction $e_k$, multiply by
$h_{ijk}$, and sum over $i,j,k$. We compute the two sides separately.

For the left-hand side, the product rule gives
\begin{equation}\label{eq:left-derivative}
\begin{aligned}
 \sum_{i,j,k}h_{ijk}\nabla_k\left(\sum_{a,b}h_{ab}h_{ijab}\right)
  &=\sum_{i,j,k,a,b}h_{ijk}h_{abk}h_{ijab}
    +\sum_{i,j,k,a,b}h_{ijk}h_{ab}h_{ijabk}\\
  &=\sum_{i,j,k,a,b}h_{ijk}h_{abk}h_{ijab}
    +\sum_{i,j,k,a}\lambda_a h_{ijk}h_{ijaak}.
\end{aligned}
\end{equation}

For the right-hand side of \eqref{eq:hijab}, differentiating and contracting
with $h_{ijk}$ gives
\[
\begin{aligned}
 &-\sum_{i,j,k,a,b}h_{ijk}
     \left(h_{abik}h_{abj}+h_{abi}h_{abjk}\right)
  +S\sum_{i,j,k,l}h_{ijk}
     \left(h_{ilk}h_{lj}+h_{il}h_{ljk}\right)\\
 &-\sum_{i,j,k}(f_3)_k h_{ij}h_{ijk}
  -f_3\sum_{i,j,k}h_{ijk}^2.
\end{aligned}
\]
By the full symmetry of $h_{ijk}$ and the symmetry of $h_{ijkl}$ in the
first three indices, the first term becomes
$
  -2\sum_{i,j,k,a,b}h_{ijk}h_{abk}h_{ijab}.
$
At the chosen point,
\[
  \sum_l h_{ilk}h_{lj}=\lambda_jh_{ijk}, \qquad \sum_l h_{il}h_{ljk}=\lambda_i h_{ijk},
\]
and therefore, using again the symmetry of $h_{ijk}$,
\[
S\sum_{i,j,k,l}
h_{ijk}
\left(
h_{ilk}h_{lj}
+
h_{il}h_{ljk}
\right)
=
2S\sum_{i,j,k}\lambda_i h_{ijk}^2.
\]
Moreover,
\[
  \sum_{i,j}h_{ij}h_{ijk} = \frac12 S_k = 0,
\]
so the term involving $(f_3)_k$ vanishes. Hence
\begin{equation}\label{eq:right-derivative}
\begin{aligned}
 \sum_{i,j,k}h_{ijk}\nabla_k\left(\sum_{a,b}h_{ab}h_{ijab}\right)
  =-2\sum_{i,j,k,a,b}h_{ijk}h_{abk}h_{ijab}+\sum_{i,j,k}(2S\lambda_i-f_3)h_{ijk}^2.
\end{aligned}
\end{equation}

Comparing \eqref{eq:left-derivative} and \eqref{eq:right-derivative}, we obtain
\begin{equation}
\label{eq:ijaak}
\begin{aligned}
\sum_{i,j,k,a}\lambda_a h_{ijk}h_{ijaak}
={}&
-3\sum_{i,j,k,a,b}
h_{ijk}h_{abk}h_{ijab}+
\sum_{i,j,k}
(2S\lambda_i-f_3)h_{ijk}^2.
\end{aligned}
\end{equation}

We next replace $h_{ijaak}$ by $h_{ijkaa}$. For this purpose, we compute
the commutator \(h_{ijkaa}-h_{ijaak}\).
Using the Ricci identity \eqref{eq:ricci-general}, we obtain
\begin{equation}
\label{eq:commutator}
\begin{aligned}
\sum_a\lambda_a(h_{ijkaa}-h_{ijaak})
=
\sum_{a,b}\lambda_a\Big[
&(\nabla_aR_{kaib})h_{bj}
+(\nabla_aR_{kajb})h_{ib}\\
&+2R_{kaib}h_{bja}
+2R_{kajb}h_{iba}+R_{kaab}h_{ijb}
\Big].
\end{aligned}
\end{equation}
Since $S$ is constant, \(\sum_a\lambda_a h_{aaj}=0\).
Substituting these identities into \eqref{eq:commutator}, multiplying by
$h_{ijk}$, summing over $i,j,k$, using the Gauss equation and the full symmetry of $h_{ijk}$,
we obtain
\begin{equation}
\label{eq:commutator-contraction}
\begin{aligned}
\sum_{i,j,k,a}
\lambda_a h_{ijk}
(h_{ijkaa}-h_{ijaak})
=
\sum_{i,j,k}
\left(
S\lambda_i
+6\lambda_i^2\lambda_j
-3\lambda_i^3
\right)h_{ijk}^2.
\end{aligned}
\end{equation}
Adding \eqref{eq:commutator-contraction} to \eqref{eq:ijaak}, we obtain
\begin{equation}\label{eq:ijk-aa}
\begin{aligned}
 \sum_{i,j,k,a}\lambda_a h_{ijk}h_{ijkaa}
  &=-3\sum_{i,j,k,a,b}h_{ijk}h_{abk}h_{ijab}\\
  &\quad+\sum_{i,j,k}
      \left(3S\lambda_i+6\lambda_i^2\lambda_j-3\lambda_i^3-f_3\right)h_{ijk}^2.
\end{aligned}
\end{equation}

Finally, since $\sum_{i,j,k}h_{ijk}^2$ is constant, we have
\[
  0 = \frac12 \sum_l\lambda_l \nabla_l\nabla_l \left( \sum_{i,j,k}h_{ijk}^2 \right).
\]
Expanding the derivatives gives
\begin{equation}
\label{eq:weighted-nablah}
\sum_{i,j,k,l}\lambda_l h_{ijk}h_{ijkll}=-\sum_{i,j,k,l}\lambda_l h_{ijkl}^2.
\end{equation}
Combining \eqref{eq:ijk-aa} and  \eqref{eq:weighted-nablah} yields
\eqref{17}.
\end{proof}
\section{A limiting argument}\label{sec:global}

We now turn to the limiting argument that produces the algebraic constraints used in the next section. Throughout this section we assume \(S>1\).

\begin{lemma}\label{lem:bounds}
Let $X:M^3\to\mathbb R^4$ be a complete self-shrinker
with constant $S>1$. Then the absolute values of
\[
h_{ij},\quad
h_{ijk},\quad
h_{ijkl},\quad
H_{ij},\quad
\sum_k h_{ijk}\langle X,e_k\rangle
\]
are bounded by a constant depending only on $S$.
\end{lemma}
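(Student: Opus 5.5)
The bounds follow from the constancy of $S$ and of $|\nabla h|^2$ together with the first Bochner identity; no global or maximum-principle argument is needed at this stage.

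\textbf{Bounds on $h_{ij}$ and $h_{ijk}$.} By definition $\sum_{i,j}h_{ij}^2=S$, so $|h_{ij}|\le\sqrt S$. In a principal frame $|\lambda_i|\le\sqrt S$, and hence $|H|\le\sqrt{3S}$. By \eqref{eq:nablah}, $\sum h_{ijk}^2=S(S-1)$, so $|h_{ijk}|\le\sqrt{S(S-1)}$.

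\textbf{Bound on $h_{ijkl}$.} Insert the previous bounds into Lemma~\ref{lem:bochner1}. The right-hand side of \eqref{eq:bochner1} satisfies
\[
S(S-1)(S-2)+3\sum_{i,j,k}(\lambda_i^2-2\lambda_i\lambda_j)h_{ijk}^2
\le S(S-1)(S-2)+9S\cdot S(S-1),
\]
since $|\lambda_i^2-2\lambda_i\lambda_j|\le 3S$. The left-hand side is a sum of squares, so each $|h_{ijkl}|$ is bounded by the square root of this constant, which depends only on $S$. The estimate is pointwise and frame-independent, because both sides of \eqref{eq:bochner1} are invariant contractions evaluated in the principal frame at each point. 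The bound does not depend on the order of indices in $h_{ijkl}$.

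\textbf{Bounds on $H_{ij}$ and $\sum_k h_{ijk}\langle X,e_k\rangle$.} Tracing the symmetry of $h_{ijkl}$ in its first three indices gives
\[
H_{ij}=\sum_k h_{kkij}=\sum_k h_{ijkk}+\text{(Ricci commutator terms)}.
\]
More simply, $H_{ij}=\sum_k h_{kki j}$ by definition, which is a sum of three components $h_{kkij}$, each bounded by the previous step. So $|H_{ij}|\le 3\sup|h_{ijkl}|$. Then \eqref{eq:Hderivatives} gives
\[
\sum_k h_{ijk}\langle X,e_k\rangle=H_{ij}-h_{ij}+H\sum_k h_{ik}h_{kj},
\]
and each term on the right is bounded: $|H\sum_k h_{ik}h_{kj}|\le\sqrt{3S}\,S$.

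The only potentially delicate point is that $\langle X,e_k\rangle$ itself is not yet known to be bounded. This is why the lemma bounds the contraction $\sum_k h_{ijk}\langle X,e_k\rangle$ rather than $\langle X,e_k\rangle$, and it is obtained indirectly through \eqref{eq:Hderivatives} rather than by estimating $X$. The main step is the application of the Bochner identity \eqref{eq:bochner1}; once it is available, every bound is elementary.
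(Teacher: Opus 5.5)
Your proposal is correct and follows the paper's proof essentially step for step. You bound $h_{ij}$ and $h_{ijk}$ directly from $S$ and \eqref{eq:nablah}, bound $|\nabla^2 h|^2$ by $(10S-2)S(S-1)$ via \eqref{eq:bochner1}, use $H_{ij}=\sum_k h_{kkij}$, and read off the bound on $\sum_k h_{ijk}\langle X,e_k\rangle$ from \eqref{eq:Hderivatives}. Your aside rewriting $H_{ij}$ as $\sum_k h_{ijkk}$ plus commutator terms is unnecessary, since, as you yourself note, the identity $H_{ij}=\sum_k h_{kkij}$ already suffices.
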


\begin{proof}
We have \(|\lambda_i|\leq\sqrt S\) and
\(\sum h_{ijk}^2=S(S-1)\). Therefore
\eqref{eq:bochner1} gives
\[
  \sum_{i,j,k,l}h_{ijkl}^2 \leq\{S-2+9S\}S(S-1)=(10S-2)S(S-1).
\]
The identity \(H_{ij}=\sum_kh_{kkij}\) bounds the second
derivatives of \(H\). By \eqref{eq:Hderivatives},
\[
  \sum_kh_{ijk}\langle X,e_k\rangle =H_{ij}-h_{ij}+H\sum_kh_{ik}h_{kj}.
\]
Since \(|H|\leq\sqrt{3S}\), the last expression is bounded
in terms of \(S\).
\end{proof}

\begin{proposition}\label{prop:bounded}
Let $X:M^3\to\mathbb R^4$ be a complete self-shrinker
with constant $S>1$. Then $|X|$ is bounded on $M$.
\end{proposition}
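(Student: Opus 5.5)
The plan is to argue by contradiction and blow down along a divergent sequence. Suppose $|X|$ is unbounded, and pick points $p_m\in M$ with $|X(p_m)|\to\infty$. The self-shrinker equation gives $|\langle X,\xi\rangle|=|H|\le\sqrt{3S}$, so the normal part of $X$ stays bounded and the tangential part $X^\top=\sum_k\langle X,e_k\rangle e_k$ must satisfy $|X^\top|(p_m)\to\infty$. Set $v_m=X^\top/|X^\top|$ at $p_m$. Lemma~\ref{lem:bounds} gives two quantities that are linear in $X^\top$ and bounded uniformly in terms of $S$:
\[
H_i=\sum_j h_{ij}\langle X,e_j\rangle ,\qquad \sum_k h_{ijk}\langle X,e_k\rangle .
\]
Here $H_i=\sum_k h_{kki}$ is bounded because $|h_{ijk}|^2\le S(S-1)$. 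Dividing by $|X^\top|(p_m)$ then yields
\[
\sum_j h_{ij}(v_m)_j=O(|X^\top|^{-1}),\qquad \sum_k h_{ijk}(v_m)_k=O(|X^\top|^{-1}).
\]

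Next I pass to a subsequence so that, in principal frames at $p_m$, the quantities $\lambda_i$, $h_{ijk}$, $h_{ijkl}$ and $v_m$ converge. This is possible because Lemma~\ref{lem:bounds} bounds all of them by constants depending only on $S$. Write $e_3$ for the limiting direction of $v_m$. In the limit the two displays give
\[
h_{i3}=0\quad\text{and}\quad h_{ij3}=0\qquad\text{for all } i,j .
\]
Hence $\lambda_3=0$, and by the full symmetry of $h_{ijk}$ only the components $h_{ijk}$ with $i,j,k\in\{1,2\}$ survive. I also want $h_{ijk3}\to0$. For this I differentiate $\sum_k h_{ijk}\langle X,e_k\rangle$ using $\nabla_l\langle X,e_k\rangle=\delta_{kl}-Hh_{kl}$, which gives
\[
\nabla_l\Bigl(\sum_k h_{ijk}\langle X,e_k\rangle\Bigr)=\sum_k h_{ijkl}\langle X,e_k\rangle+h_{ijl}-H\sum_k h_{kl}h_{ijk}.
\]
To conclude that $h_{ijl3}\to0$, I need the left-hand side to stay bounded, for instance from a bound on third derivatives of $H$. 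Where that is not available, I would fall back on a local-compactness argument. Translate by $-X(p_m)$; since the second fundamental form and its derivatives are bounded, the translated immersions converge locally smoothly to a limit hypersurface $M_\infty$. On $M_\infty$ the vector $e_3$ is tangent everywhere, because $\langle X(p_m)/|X(p_m)|,\xi\rangle\to0$ locally uniformly. So $M_\infty$ splits locally as $\Sigma^2\times\RR$, with $\nabla_{e_3}h\equiv0$ and $h(e_3,\cdot)\equiv0$.

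Finally I derive the contradiction on the limit. The identities \eqref{eq:nablah}, \eqref{eq:bochner1} and \eqref{17} are pointwise algebraic relations among $\lambda_i,h_{ijk},h_{ijkl}$, so they pass to the limit. On the limit, $\lambda_3=0$, $\lambda_1^2+\lambda_2^2=S$, and all components involving the index $3$ vanish. With these constraints, \eqref{eq:bochner1} and \eqref{17}, together with the trace relations $\sum_a\lambda_ah_{aak}=0$ and $\sum_a\lambda_a h_{aakl}=-\sum_{a,b}h_{abk}h_{abl}$ from \eqref{eq:hijab}, reduce to identities for a two-dimensional tensor with only four independent components $h_{111},h_{112},h_{122},h_{222}$. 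I expect the Simons identity $\sum h_{ijk}^2=S(S-1)$, the constraint $\lambda_1h_{11k}+\lambda_2h_{22k}=0$, and the Bochner identity restricted to $\Sigma$ to be incompatible with $S>1$. Equivalently, $\Sigma$ would be a surface in $\RR^3$ with constant $|A|^2=S$ satisfying the cylinder-reduced relations, and I would rule this out directly.

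The main obstacle is this final algebraic step, together with justifying that the second-derivative components $h_{ijk3}$ vanish in the limit. If the pointwise reduction does not close, I would use the smooth-convergence picture instead. There, $M_\infty$ is a product whose cross-section satisfies the shrinker-type equation with an extra bounded linear term, coming from $\lim|X(p_m)|\,\langle w_m,\xi\rangle$, and I would apply the maximum-principle argument of Lemma~\ref{lem:weighted-geometry} to $H$ on that cross-section.
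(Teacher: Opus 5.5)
Your proposal has a genuine gap. Blowing down at an arbitrary divergent sequence loses one of the two first-order trace relations, and the argument needs it.

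\textbf{What your limit gives.} At $p_m$ you only know that $H_k=\sum_i h_{iik}$ is bounded. So in the limit you get $\bar h_{i3}=\bar h_{ij3}=0$ and $\sum_i\bar\lambda_i\bar h_{iik}=0$, but not $\sum_i\bar h_{iik}=0$.

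\textbf{The equal case is fine.} If $\bar\lambda_1=\bar\lambda_2$, then $\bar\lambda_1^2=\bar\lambda_2^2=S/2$. Every surviving coefficient $\bar\lambda_i^2-2\bar\lambda_i\bar\lambda_j$ in \eqref{eq:bochner1} equals $-S/2$, so $\sum\bar h_{ijkl}^2=-(S/2+2)S(S-1)<0$. Your ingredients settle this case.

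\textbf{The distinct case does not close.} When $\bar\lambda_1\neq\bar\lambda_2$, the pointwise relations you list are consistent. Take $\bar\lambda=(\ell,-\ell,0)$ with $\ell^2=S/2$. Let $\bar h_{111}=\bar h_{122}=\frac12\sqrt{S(S-1)}$, with all other components zero up to symmetry.
\begin{itemize}
\item Then $\sum_i\bar\lambda_i\bar h_{iik}=0$ and $\sum\bar h_{ijk}^2=S(S-1)$.
\item The right side of \eqref{eq:bochner1} equals $\frac12S(S-1)(5S-4)>0$, so there is no contradiction there.
\item For $S\ge2$ you can go further. Write $\bar h_{ijkl}=\Omega_{ijkl}+V_{ijkl}$ with $\Omega$ as in Section~\ref{sec:algebra}, so here $\Omega_{1122}=-\ell^3$. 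Take $V$ fully symmetric with $V_{1111}=-V_{2222}=S(2-S)/(4\ell)$ and $V_{1112}=V_{1222}=y$, where $8y^2=\frac S2(3S-1)(S-2)$. Set all other components of $V$ to zero.
\item This array satisfies the Ricci identity, $\sum_a\bar\lambda_a\bar h_{aakl}=-\bar Q_{kl}$, \eqref{eq:bochner1}, and \eqref{17}. Every component containing the index $3$ vanishes.
\end{itemize}
So your final algebraic step cannot close, even if you grant $h_{ijk3}\to0$ (which you also have not justified).

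\textbf{The fallback is only sketched and does not repair this.}
\begin{itemize}
\item Lemma~\ref{lem:bounds} bounds $h$ only up to second derivatives, so smooth convergence of the translated immersions is not justified.
\item The shrinker equation degenerates in this limit. The bounded quantity $\langle X(p_m),\xi\rangle$ reflects the first-order deviation of the surfaces from the cylinder, not a fixed vector. So $\Sigma$ inherits no closed equation.
\item You do not say what contradiction the maximum principle would produce. Lemma~\ref{lem:weighted-geometry} itself only yields $\inf H^2=0$.
\end{itemize}

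\textbf{The missing idea} is to choose the divergent points so that $\nabla H\to0$ there. The paper does this as follows.
\begin{itemize}
\item Take $|X(p_m)|\ge m^2$, and let $q_m$ maximize $H-m^{-1}d(p_m,\cdot)$. The maximum exists because $H$ is bounded and closed balls are compact.
\item Comparing with the value at $p_m$ gives $d(p_m,q_m)\le2\sqrt{3S}\,m$. Hence $|X(q_m)|\ge m^2-2\sqrt{3S}\,m\to\infty$.
\item Comparing along geodesics from $q_m$ gives $|\nabla H(q_m)|\le 1/m$.
\item The limit at $q_m$ then also satisfies $\bar h_{11k}+\bar h_{22k}=0$. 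Together with $\bar\lambda_1\bar h_{11k}+\bar\lambda_2\bar h_{22k}=0$, this forces all $\bar h_{ijk}=0$ when $\bar\lambda_1\ne\bar\lambda_2$, contradicting $S(S-1)>0$.
\item The equal case is the Bochner computation above.
\end{itemize}
With this choice of points you need neither $h_{ijk3}\to0$, nor \eqref{17}, nor any blow-down limit.
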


\begin{proof}
We argue by contradiction. Suppose that $|X|$ is unbounded on $M$.
For each positive integer $m$, choose $p_m\in M$ such that \(|X(p_m)|\ge m^2\).
Consider the function \(F_m(q)=H(q)-m^{-1}d(p_m,q)\),
where $d$ denotes the intrinsic distance on $M$.

Since $|H|\le \sqrt{3S}$, the function $H$ is bounded. Hence
\[
  F_m(q)\to -\infty \qquad\text{as }\qquad d(p_m,q)\to\infty.
\]
Therefore the maximum of $F_m$ is attained in a sufficiently large
closed metric ball centered at $p_m$. Since $M$ is complete, closed
metric balls are compact by the Hopf--Rinow theorem. Thus there exists
$q_m\in M$ such that
\(F_m(q_m)=\max_M F_m\).

Since $F_m(q_m)\geq F_m(p_m)$, we obtain
\[
  H(q_m)-m^{-1}d(p_m,q_m)\ge H(p_m).
\]
Since $|H|\le\sqrt{3S}$, it follows that
\[
  d(p_m,q_m) \le m\bigl(H(q_m)-H(p_m)\bigr) \le 2\sqrt{3S}\,m.
\]

We next estimate the gradient of $H$ at $q_m$. Let
$v\in T_{q_m}M$ be a unit vector and let $\gamma$ be the unit-speed
geodesic satisfying $\gamma(0)=q_m$ and $\gamma'(0)=v$.
By the maximality of $q_m$, \(F_m(\gamma(t))\le F_m(q_m)\),
and hence
\[
  H(\gamma(t))-H(q_m) \le m^{-1} \left\{ d(p_m,\gamma(t))-d(p_m,q_m) \right\}.
\]
By the triangle inequality,
\[
  d(p_m,\gamma(t)) \le d(p_m,q_m)+d(q_m,\gamma(t)) \le d(p_m,q_m)+|t|,
\]
and therefore \(H(\gamma(t))-H(q_m)\le m^{-1}|t|\).
Dividing by $t$ and letting $t\to0^+$ and $t\to0^-$,
respectively, we obtain
\[
dH_{q_m}(v)\le m^{-1},
\qquad
dH_{q_m}(v)\ge -m^{-1}.
\]
Since $v$ is arbitrary, it follows that
\begin{equation}\label{eq:gradHsmall}
|\nabla H(q_m)|\le m^{-1}.
\end{equation}
The Euclidean distance $|X(p_m)-X(q_m)|$ is no larger than
$d(p_m,q_m)$. Hence
\[
\begin{aligned}
|X(q_m)|\ge
|X(p_m)|-|X(q_m)-X(p_m)|\ge
|X(p_m)|-d(p_m,q_m)\ge
m^2-2\sqrt{3S}\,m,
\end{aligned}
\]
and consequently \(|X(q_m)|\to\infty\).
On the other hand, by the self-shrinker equation, \(X^\perp=-H\xi\),
so \(|X^\perp|=|H|\le\sqrt{3S}\).
Therefore
\[
  |X^\top(q_m)|\to\infty.
\]

After discarding finitely many points, choose an orthonormal frame at each $q_m$ such that
\[
  e_3=\frac{X^\top(q_m)}{|X^\top(q_m)|}.
\]
By Lemma~\ref{lem:bounds}, the components $h_{ij}$, $h_{ijk}$, and $h_{ijkl}$ are
uniformly bounded. Passing to a subsequence, we may therefore assume
that
\[
h_{ij}(q_m)\to\bar h_{ij},
\qquad
h_{ijk}(q_m)\to\bar h_{ijk},
\qquad
h_{ijkl}(q_m)\to\bar h_{ijkl}.
\]

At $q_m$, the first identity in \eqref{eq:Hderivatives} becomes
\[
  H_i(q_m) = h_{i3}(q_m)|X^\top(q_m)|.
\]
By \eqref{eq:gradHsmall}, \(|H_i(q_m)|\le |\nabla H(q_m)|\to0\),
whereas $|X^\top(q_m)|\to\infty$. Hence \(h_{i3}(q_m)\to0\),
and therefore
$\bar h_{i3}=0$ for all $i$.

Similarly, Lemma~\ref{lem:bounds} shows that \(\sum_k h_{ijk}\langle X,e_k\rangle\)
is uniformly bounded. Since our choice of frame gives
\[
  \sum_k h_{ijk}\langle X,e_k\rangle = |X^\top(q_m)|\,h_{ij3}(q_m),
\]
we obtain \(h_{ij3}(q_m)\to 0\).
Thus \(\bar h_{ij3}=0\).
Since $h_{ijk}$ is fully symmetric, every component
$\bar h_{ijk}$ containing the index $3$ vanishes.
After rotating the $(e_1,e_2)$-plane, we may assume that
\[
  (\bar h_{ij}) = \operatorname{diag} (\bar\lambda_1,\bar\lambda_2,0).
\]
We now use the two trace relations satisfied by $h_{ijk}$. Since \(H_k=\sum_i h_{iik}\) and \eqref{eq:gradHsmall} gives $H_k(q_m)\to0$, we obtain, for
$k=1,2$, \(\bar h_{11k}+\bar h_{22k}=0\),
because $\bar h_{33k}=0$.

Moreover, since $S$ is constant, \(0=\frac12S_k = \sum_{i,j}h_{ij}h_{ijk}\).
Passing to the limit and using the diagonal form of $\bar h_{ij}$ gives
\[
  \bar\lambda_1\bar h_{11k} + \bar\lambda_2\bar h_{22k} =0, \quad k=1,2.
\]

We now distinguish two cases.

If \(\bar\lambda_1\ne\bar\lambda_2\),
then, for each $k=1,2$, the system
\[
\bar h_{11k}+\bar h_{22k}=0,
\quad
\bar\lambda_1\bar h_{11k}
+
\bar\lambda_2\bar h_{22k}=0
\]
has only the trivial solution. Hence $\bar h_{11k} = \bar h_{22k} =0$, $k=1,2$.
Together with the vanishing of all components containing the index $3$
and the full symmetry of $\bar h_{ijk}$, this implies $\bar h_{ijk}=0$ for all $i,j,k$.
This contradicts \eqref{eq:nablah}, since
\[
  \sum_{i,j,k}\bar h_{ijk}^2 = S(S-1)>0.
\]

If \(\bar\lambda_1=\bar\lambda_2\), then since $\bar\lambda_3=0$ and \(\bar\lambda_1^2+\bar\lambda_2^2=S\),
we have \(\bar\lambda_1^2 = \bar\lambda_2^2 = \frac{S}{2}\).
As noted above, every nonzero component $\bar h_{ijk}$ has
$i,j,k\in\{1,2\}$. Hence, whenever $\bar h_{ijk}\ne0$,
\[
  \bar\lambda_i^2 - 2\bar\lambda_i\bar\lambda_j = -\frac{S}{2}.
\]
Passing to the limit in \eqref{eq:bochner1}, we obtain
\[
\begin{aligned}
 \sum_{i,j,k,l}\bar h_{ijkl}^2
 &=S(S-1)(S-2)-\frac{3S}{2}\sum_{i,j,k}\bar h_{ijk}^2\\
 &=\left(S-2-\frac{3S}{2}\right)S(S-1)
   =-\left(\frac S2+2\right)S(S-1)<0.
\end{aligned}
\]
This is impossible, since the left-hand side is a sum of squares. Thus $|X|$ is bounded on $M$.
\end{proof}
We shall use the following form of Omori's Hessian maximum
principle \cite[Theorem A']{Omori}. Let $M$ be a complete
Riemannian manifold with sectional curvature bounded below.
If $f\in C^2(M)$ is bounded above, then there exists a sequence
$\{q_m\}\subset M$ such that
\[
\begin{gathered}
f(q_m)>\sup_M f-\frac{1}{m},
|\nabla f(q_m)|<\frac{1}{m},
\qquad
\nabla^2 f(q_m)\leq \frac{1}{m}g.
\end{gathered}
\]

\Needspace{16\baselineskip}
\begin{proposition}
\label{prop:limit}
Let $X:M^3\to\mathbb R^4$ be a complete self-shrinker
with constant $S>1$. Then there exists a sequence of points
$\{q_m\}\subset M$ and a choice of orthonormal principal frames
at $q_m$ such that, after passing to a subsequence,
the principal curvatures $\lambda_i$ and the components
$h_{ijk},h_{ijkl}$ converge to
$\bar\lambda_i,\bar h_{ijk},\bar h_{ijkl}$, respectively,
satisfying
\begin{equation}
\label{eq:limit-principal}
\begin{gathered}
 \bar\lambda_i>0,\qquad \bar H=\sum_i\bar\lambda_i,\qquad
 \bar H\bar\lambda_i\geq1,\\
 \sum_i\bar\lambda_i^2=S,\qquad
 \sum_{i,j,k}\bar h_{ijk}^2=S(S-1).
\end{gathered}
\end{equation}
Moreover,
\begin{equation}
\label{eq:limit-firsttrace}
\sum_i\bar h_{iik}
=
\sum_i\bar\lambda_i\bar h_{iik}
=
0,
\end{equation}
and
\begin{equation}
\label{eq:limit-secondtrace}
\sum_i\bar h_{iikl}
=
(\bar\lambda_k-\bar H\bar\lambda_k^2)\delta_{kl},
\end{equation}
while
\begin{equation}
\label{eq:limit-weightedtrace}
\sum_i\bar\lambda_i\bar h_{iikl}
=
-\bar Q_{kl},
\qquad
\bar Q_{kl}
=
\sum_{i,j}\bar h_{ijk}\bar h_{ijl}.
\end{equation}
The components $\bar h_{ijk}$ are fully symmetric, and
$\bar h_{ijkl}$ are symmetric in the first three indices.
The Ricci identity in the first line of \eqref{eq:ricci-general}
and the identities \eqref{eq:bochner1} and \eqref{17} also hold
for these limiting components.
\end{proposition}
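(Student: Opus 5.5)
Since $|X|$ is bounded by Proposition~\ref{prop:bounded}, the function $f=|X|^2/2$ is bounded above. I plan to apply Omori's Hessian maximum principle to $f$. The required curvature bound comes from the Gauss equation: the sectional curvatures are $\lambda_i\lambda_j\geq -S$. This gives a sequence $\{q_m\}$ with
\[
|\nabla f(q_m)|<\tfrac1m,\qquad \nabla^2 f(q_m)\le \tfrac1m g.
\]
Since $f_i=\langle X,e_i\rangle$, we get $|X^\top(q_m)|\to0$. Since $f_{ij}=\delta_{ij}-Hh_{ij}$, in a principal frame at $q_m$ the Hessian condition reads $1-H\lambda_i\le 1/m$, that is, $H\lambda_i\ge 1-1/m$. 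By Lemma~\ref{lem:bounds}, all of $\lambda_i$, $h_{ijk}$, $h_{ijkl}$ are bounded in terms of $S$. I would pass to a subsequence along which the ordered principal curvatures and the components in the chosen principal frames converge. In the limit we obtain $\bar H\bar\lambda_i\ge1$ for every $i$. In particular $\bar H\ne0$ and each $\bar\lambda_i$ has the sign of $\bar H$. After reversing $\xi$ if needed (this flips the signs of $h,\nabla h,\nabla^2 h,H$ and leaves $S$ and the shrinker equation unchanged), all $\bar\lambda_i>0$. The relations $\sum\bar\lambda_i^2=S$ and $\sum\bar h_{ijk}^2=S(S-1)$ follow by continuity from \eqref{eq:nablah}.

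For the trace identities, $S_k=0$ gives $\sum_i\lambda_ih_{iik}=0$ at every point. Also $H_k=\sum_j h_{kj}\langle X,e_j\rangle\to0$ because $X^\top(q_m)\to0$ and $h$ is bounded, so $\sum_i\bar h_{iik}=0$. For \eqref{eq:limit-secondtrace} I use $\sum_ih_{iikl}=H_{kl}$, by Codazzi symmetry $h_{iikl}=h_{kiil}$ and commuting appropriately. I then substitute the second line of \eqref{eq:Hderivatives}. The term $\sum_j h_{klj}\langle X,e_j\rangle$ tends to $0$ because $h_{klj}$ is bounded and $X^\top\to0$. In the principal frame the remaining terms give $(\lambda_k-H\lambda_k^2)\delta_{kl}$.

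Some care is needed with the index order, since $h_{ijkl}$ is symmetric only in the first three indices. We have $H_{kl}=\sum_i h_{iikl}$ directly, because $H_k=\sum_i h_{iik}$ and then one differentiates in $e_l$. That matches the stated index order.

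For \eqref{eq:limit-weightedtrace} I take the identity obtained in the proof of \eqref{17}: differentiating $\sum h_{ab}h_{abk}=0$ gives $\sum_{a,b}h_{ab}h_{abkl}=-\sum_{a,b}h_{abk}h_{abl}$. In the principal frame this is $\sum_i\lambda_ih_{iikl}=-Q_{kl}$, and passing to the limit gives the claim. The symmetries, the Ricci identity, \eqref{eq:bochner1} and \eqref{17} are pointwise algebraic identities among $\lambda_i,h_{ijk},h_{ijkl}$ (with curvature given by Gauss), valid in any principal frame at each $q_m$, so they pass to the limit.

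The main obstacle is making the limit of frames legitimate when principal curvatures coalesce, since the principal frame is then not unique. I would fix any principal frame at each $q_m$ and use compactness of all bounded components only, never continuity of frames along $M$. Because every identity is a pointwise statement at $q_m$, no smoothness of the frame field is needed; this removes the difficulty.
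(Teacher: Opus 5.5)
Your proposal is correct and follows the paper's proof essentially step for step. Both apply Omori's Hessian maximum principle to $f=|X|^2/2$ to get $|X^\top(q_m)|\to0$ and $H\lambda_i\ge 1-\varepsilon_m$, use compactness of the bounded components in arbitrarily chosen principal frames at the $q_m$, and read the trace relations off $H_k$, $H_{kl}$, $S_k=0$ and $S_{kl}=0$. Your curvature bound is also fine: the sectional curvature of a non-principal plane is a convex combination of the $\lambda_i\lambda_j$, so it is still $\ge -S$.

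The only difference is cosmetic. You fix the orientation after passing to the limit, reading $\bar H\neq0$ and the common sign of the $\bar\lambda_i$ off $\bar H\bar\lambda_i\ge1$. The paper instead first uses $H(q_m)^2\ge3(1-\varepsilon_m)$ to arrange $H(q_m)>0$ along a subsequence.
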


\begin{proof}
By Proposition~\ref{prop:bounded}, the function \(f=\frac{|X|^2}{2}\)
is bounded above on $M$. We first verify that the Hessian maximum
principle applies to $f$.

Let $e_i,e_j$ be orthonormal tangent vectors. By the Gauss equation,
\[
  K(e_i,e_j)=h_{ii}h_{jj}-h_{ij}^2.
\]
Using \(h_{ii}h_{jj} \ge -\frac{h_{ii}^2+h_{jj}^2}{2}\),
we obtain
\[
  K(e_i,e_j) \ge -\frac{h_{ii}^2+h_{jj}^2}{2}-h_{ij}^2.
\]
Since \(h_{ii}^2+h_{jj}^2+2h_{ij}^2\le S\),
it follows that \(K(e_i,e_j)\ge-\frac{S}{2}\).
Thus the sectional curvature of $M$ is bounded from below, and hence
the Hessian maximum principle applies to $f$.
Therefore there exist points $q_m\in M$ and positive numbers
$\varepsilon_m\to0$ such that
\[
  f(q_m)\to\sup_M f, \qquad |\nabla f(q_m)|\to0, \qquad \nabla^2f(q_m)\le\varepsilon_m g.
\]

Recall that
$f_i=\langle X,e_i\rangle$, $f_{ij}=\delta_{ij}-Hh_{ij}$.
Hence
\[
  |\nabla f|^2 = \sum_i\langle X,e_i\rangle^2 = |X^\top|^2
\]
and therefore
\(|X^\top(q_m)|\to0\).

At each $q_m$, choose an orthonormal frame diagonalizing the second
fundamental form, so that \(h_{ij}(q_m)=\lambda_i(q_m)\delta_{ij}\).
The inequality \(\nabla^2f(q_m)\le\varepsilon_m g\)
then gives, for each $i$, \(1-H(q_m)\lambda_i(q_m)\le\varepsilon_m\),
or equivalently,
\begin{equation}
\label{eq:Hlambda-approx}
H(q_m)\lambda_i(q_m)\ge1-\varepsilon_m.
\end{equation}

Taking the trace of \((\delta_{ij}-Hh_{ij})(q_m)\le\varepsilon_m(\delta_{ij})\),
we obtain \(3-H(q_m)^2\le3\varepsilon_m\),
and hence \(H(q_m)^2\ge3(1-\varepsilon_m)\).
After passing to a subsequence and reversing the unit normal if necessary,
we may assume that \(H(q_m)>0\)
for all sufficiently large $m$. Since $\varepsilon_m\to0$, it follows
from \eqref{eq:Hlambda-approx} that \(\lambda_i(q_m)>0\)
for all sufficiently large $m$.

By Lemma~\ref{lem:bounds}, the components $h_{ij}$, $h_{ijk}$, and $h_{ijkl}$ are
uniformly bounded in terms of $S$. Passing to a further subsequence, we
may therefore assume that
\[
\lambda_i(q_m)\to\bar\lambda_i,
\qquad
h_{ijk}(q_m)\to\bar h_{ijk},
\qquad
h_{ijkl}(q_m)\to\bar h_{ijkl}.
\]
Set \(\bar H = \bar\lambda_1+\bar\lambda_2+\bar\lambda_3\).
Passing to the limit in \eqref{eq:Hlambda-approx} gives \(\bar H\bar\lambda_i\ge1\).
Moreover, \(H^2\le3\sum_i\lambda_i^2=3S\),
so \(\bar H\le\sqrt{3S}\).
Since $\bar H\bar\lambda_i\ge1$, we have $\bar H>0$ and
\[
  \bar\lambda_i \ge \frac1{\bar H} \ge \frac1{\sqrt{3S}} >0.
\]
Finally, passing to the limit in \(\sum_i\lambda_i^2=S\)
and in \eqref{eq:nablah} gives
\[
  \sum_i\bar\lambda_i^2=S, \qquad \sum_{i,j,k}\bar h_{ijk}^2=S(S-1).
\]
Thus \eqref{eq:limit-principal} holds.

We next derive the first-order trace relations. By the first identity in
\eqref{eq:Hderivatives},
\[
  H_k = \sum_jh_{kj}\langle X,e_j\rangle.
\]
Since $h_{kj}$ is uniformly bounded and \(|X^\top(q_m)|\to0\),
we obtain \(H_k(q_m)\to0\).
On the other hand, \(H_k=\sum_i h_{iik}\),
and hence
\(\sum_i\bar h_{iik}=0\).

Since $S$ is constant,
\[
  0 = \frac12S_k = \sum_{i,j}h_{ij}h_{ijk}.
\]
At $q_m$, where $h_{ij}=\lambda_i\delta_{ij}$, this becomes \(\sum_i\lambda_i h_{iik}=0\).
Passing to the limit gives \(\sum_i\bar\lambda_i\bar h_{iik}=0\).
Therefore \eqref{eq:limit-firsttrace} holds.

We now derive the second-order trace relations. By the second identity in
\eqref{eq:Hderivatives},
\[
  H_{kl} = \sum_jh_{klj}\langle X,e_j\rangle +h_{kl} -H\sum_jh_{kj}h_{jl}.
\]
The first term tends to zero, since $h_{klj}$ is uniformly bounded and
$|X^\top(q_m)|\to0$. In the principal frame,
\[
  h_{kl}=\lambda_k\delta_{kl}, \qquad \sum_jh_{kj}h_{jl} = \lambda_k^2\delta_{kl}.
\]
Hence
\[
  H_{kl}(q_m) \to (\bar\lambda_k-\bar H\bar\lambda_k^2)\delta_{kl}.
\]
Since \(H_{kl} = \sum_i h_{iikl}\),
we obtain
\[
  \sum_i\bar h_{iikl} = (\bar\lambda_k-\bar H\bar\lambda_k^2)\delta_{kl},
\]
which gives \eqref{eq:limit-secondtrace}.

Finally, differentiating $S$ twice gives
\[
  0 = \frac12S_{kl} = \sum_{i,j}h_{ijk}h_{ijl} + \sum_{i,j}h_{ij}h_{ijkl}.
\]
At $q_m$, where $h_{ij}=\lambda_i\delta_{ij}$, the above equation becomes
\[
  \sum_i\lambda_i h_{iikl} = -\sum_{i,j}h_{ijk}h_{ijl}.
\]
Passing to the limit gives
\[
  \sum_i\bar\lambda_i\bar h_{iikl} = -\sum_{i,j}\bar h_{ijk}\bar h_{ijl}.
\]
Thus, defining \(\bar Q_{kl} = \sum_{i,j}\bar h_{ijk}\bar h_{ijl}\),
we obtain \eqref{eq:limit-weightedtrace}.

The full symmetry of $\bar h_{ijk}$ and the symmetry of $\bar h_{ijkl}$
in the first three indices follow by passing to the limit. Likewise, the Ricci identity in the first line of
\eqref{eq:ricci-general} and the identities \eqref{eq:bochner1}
and \eqref{17} remain valid in the limit, since they involve only
these bounded components and the principal curvatures.
\end{proof}

In the subsequent algebraic argument we omit the bars from
these limiting quantities. Thus \(h_{ijk}\) and \(h_{ijkl}\)
there denote the limiting components.

\section{An algebraic inequality}
\label{sec:algebra}\label{sec:positive-minimum}

We now turn to the algebraic inequality needed for the proof of
Theorem~\ref{thm:main}. By Proposition~\ref{prop:limit},
the limiting principal curvatures are positive and satisfy
$\bar H\bar\lambda_i\geq 1$.

We formulate the inequality for algebraic quantities satisfying
the relations arising from Proposition~\ref{prop:limit}.
Throughout this section, we omit the bars and use
$\lambda_i$, $h_{ijk}$, and $h_{ijkl}$ for the corresponding
algebraic variables. In particular, the Gauss equation and
the Ricci identity give the commutation relation
\begin{equation}\label{eq:algebra-ricci}
h_{ijkl}-h_{ijlk}
=
\lambda_i\lambda_j(\lambda_i-\lambda_j)
(\delta_{ik}\delta_{jl}-\delta_{il}\delta_{jk}).
\end{equation}

After relabeling, we may assume that
\[
0<\lambda_1\leq\lambda_2\leq\lambda_3.
\]
For convenience, put
\[
a=\lambda_1,\qquad b=\lambda_2,\qquad c=\lambda_3,
\]
so that $A=\operatorname{diag}(a,b,c)$, and set
\[
H=a+b+c,\qquad
S=a^2+b^2+c^2,\qquad
f_3=a^3+b^3+c^3.
\]
Here $H$, $S$, and $f_3$ are regarded as algebraic functions
of $a,b,c$, agreeing with the corresponding limiting curvature
quantities in the geometric case.

To make the algebraic conditions compatible with rescaling,
we replace the constant $1$ by a parameter $\rho>0$ and put
\[
C=S(S-\rho).
\]
We assume $S>\rho$, so that $C>0$. Since $a$ is the smallest
principal curvature and $H>0$, the inequalities
$H\lambda_i\geq\rho$ for all $i$ are equivalent to
$aH\geq\rho$.

Under the rescaling
\[
\lambda_i\mapsto t\lambda_i,\qquad
h_{ijk}\mapsto t^2h_{ijk},\qquad
h_{ijkl}\mapsto t^3h_{ijkl},\qquad
\rho\mapsto t^2\rho
\qquad (t>0),
\]
we have $H\mapsto tH$, $S\mapsto t^2S$, and
$C\mapsto t^4C$. Thus the algebraic constraints in
Proposition~\ref{prop:positivity} retain their form.
This scaling will later allow us to normalize $a=1$.
The geometric situation of Proposition~\ref{prop:limit}
corresponds to $\rho=1$.

Define
\begin{equation}\label{eq:J}
\begin{aligned}
J={}&
\sum_{i,j,k,l}
\left(\lambda_l+\frac H2\right)h_{ijkl}^2
-3\sum_{i,j,k,r,s}h_{ijk}h_{rsk}h_{ijrs}\\
&+\sum_{i,j,k}
\left\{
3S\lambda_i+6\lambda_i^2\lambda_j-3\lambda_i^3
-\frac{3H}{2}(\lambda_i^2-2\lambda_i\lambda_j)
\right\}h_{ijk}^2\\
&-\left\{f_3+\frac H2(S-2\rho)\right\}C.
\end{aligned}
\end{equation}

For the limiting quantities obtained in
Proposition~\ref{prop:limit}, the identity
\eqref{eq:bochner1} gives
\[
0=
\sum_{i,j,k,l}h_{ijkl}^2
-3\sum_{i,j,k}
(\lambda_i^2-2\lambda_i\lambda_j)h_{ijk}^2
-(S-2)S(S-1).
\]
These limiting quantities correspond to $\rho=1$ in
the algebraic formulation, so that $C=S(S-1)$.
Multiplying the above identity by $H/2$ and adding
\eqref{17}, we obtain $J=0$.

The corresponding algebraic positivity result is as follows.

\begin{proposition}\label{prop:positivity}
Under the notation above, suppose that $A$ is nonscalar
and $aH\geq\rho$. Let $h_{ijk}$ be fully symmetric and satisfy
\begin{equation}\label{eq:algebra-T}
\sum_i h_{iik}=0,\qquad
\sum_i\lambda_i h_{iik}=0,\qquad
\sum_{i,j,k}h_{ijk}^2=C.
\end{equation}
Let $h_{ijkl}$ be symmetric in its first three indices
and satisfy \eqref{eq:algebra-ricci} and the trace identities
\begin{equation}\label{eq:algebra-U}
\begin{aligned}
\sum_i h_{iikl}
&=(\rho\lambda_k-H\lambda_k^2)\delta_{kl},\\
\sum_i\lambda_i h_{iikl}
&=-\sum_{r,s}h_{rsk}h_{rsl}.
\end{aligned}
\end{equation}
Then $J>0$.
\end{proposition}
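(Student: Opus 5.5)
The plan is to regard $J$ as a quadratic function of the second-derivative variables $h_{ijkl}$, with the $h_{ijk}$ entering through the cubic coupling term $-3\sum h_{ijk}h_{rsk}h_{ijrs}$ and through the constant terms. Then minimize first over $h_{ijkl}$ subject to the linear constraints, and afterwards over $h_{ijk}$.

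\textbf{Step 1: parametrize the second derivatives.} The Ricci relation \eqref{eq:algebra-ricci} together with symmetry in the first three indices means $h_{ijkl}$ is determined by a fully symmetric $4$-tensor $T_{ijkl}$ plus an explicit correction. That correction is linear in $\lambda_i\lambda_j(\lambda_i-\lambda_j)$; for instance, one can take $h_{ijkl}$ equal to the symmetrization plus the commutator part. A fully symmetric $4$-tensor in dimension $3$ has $15$ components. The trace identities \eqref{eq:algebra-U} impose $12$ linear conditions: for each symmetric pair $(k,l)$ there are two, namely $\sum_i T_{iikl}$ and $\sum_i\lambda_iT_{iikl}$. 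When $a,b,c$ are distinct, these let us solve for the components $T_{iikl}$ with repeated indices in terms of the right-hand sides, which involve $Q_{kl}=\sum h_{rsk}h_{rsl}$. This leaves a small number of free variables, presumably $T_{1111}$-type combinations and $T_{1123}$-type ones; the paper's ``four free variables''. Substituting into $J$ makes $\sum(\lambda_l+H/2)h_{ijkl}^2-3\sum h_{ijk}h_{rsk}h_{ijrs}$ a quadratic in the free variables. The weights $\lambda_l+H/2$ are positive since all $\lambda_i>0$, so this quadratic is positive definite. Minimizing it by completing the square gives a lower bound $J\ge J_{\min}(h_{ijk};a,b,c,\rho)$, which is a quartic form in $h_{ijk}$ plus quadratic and constant terms.

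\textbf{Step 2: reduce the first derivatives.} The constraints \eqref{eq:algebra-T} leave seven variables among the $h_{ijk}$ when $a,b,c$ are distinct:
\begin{itemize}
\item $h_{123}$;
\item for each $k$, the vector $(h_{11k},h_{22k},h_{33k})$ orthogonal to $(1,1,1)$ and $(a,b,c)$, hence one parameter per $k$, say $h_{11k}=(b-c)u_k$, $h_{22k}=(c-a)u_k$, $h_{33k}=(a-b)u_k$;
\item the off-diagonal components $h_{iij}$ with $i\ne j$, which are already among these by symmetry.
\end{itemize}
The norm condition fixes $\sum h_{ijk}^2=C$.

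I would use the scaling to normalize $a=1$, so that $\rho\le H$ and $S>\rho$. The lower bound is then homogeneous in the $h_{ijk}$ up to the fixed norm $C$. I would aim to show that the resulting expression, a quadratic form in the quadratic monomials of $h_{ijk}$ restricted to the sphere of radius $\sqrt C$, exceeds $\bigl(f_3+\tfrac H2(S-2\rho)\bigr)C$ minus the remaining terms. It is natural to bound the quartic part from below by a multiple of $C^2=(\sum h_{ijk}^2)^2$ using the constraint. That yields an inequality of the form $\alpha(a,b,c)C+\beta(a,b,c,\rho)>0$, linear in $\rho$. Linearity means it suffices to check the endpoints $\rho\to S$ (where $C\to0$) and $\rho=aH$.

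\textbf{Step 3: verify the coefficient inequalities.} At the endpoint $\rho=aH$ one gets polynomial inequalities in $(b,c)$ with $1\le b\le c$. I would handle these by a concavity argument in one variable reducing to the boundary $b=1$ or $c\to\infty$, followed by sign checks of finitely many polynomial coefficients done by exact arithmetic.

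Repeated eigenvalues, $a=b<c$ or $a<b=c$, need a separate treatment because the trace system degenerates. There one solves the constraints directly and the kernel is larger. The nonscalar hypothesis excludes $a=b=c$, where the system collapses entirely.

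\textbf{Main obstacle.} I expect the main obstacle to be Step 2. The minimization over $h_{ijkl}$ produces a quartic form in $h_{ijk}$ whose sign is not obvious, and controlling it against the constant term $-(f_3+\tfrac H2(S-2\rho))C$ requires a sharp choice of how to bound the cross terms. The rest is lengthy but mechanical polynomial verification.
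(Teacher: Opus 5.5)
\textbf{There is a genuine gap, in your Step 2.} Your Step 1 is what the paper does. It writes $h_{ijkl}$ as a fully symmetric part plus a fixed commutator correction, solves the twelve trace equations (rank eleven), and completes squares in the four remaining free components using $\lambda_l+H/2>0$. Your parametrization $h_{iik}=d_iu_k$, $h_{123}=u_4$ is also the paper's; note that it leaves four free variables, not seven.

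Step 2 is where the whole content of the proposition lies, and the plan there does not work. After minimizing over $h_{ijkl}$ and homogenizing via $h=\sqrt{C/M}\,\widehat h$, you have a single quartic form $\mathcal Q$ in $(u_1,\dots,u_4)$, and proving it positive is the entire problem.
\begin{itemize}
\item Bounding its quartic part below by $\mu C^2$, with $\mu$ the minimum on the sphere, only restates that problem for a piece of it. Minimizing the quartic and quadratic pieces separately also discards how they are correlated, and nothing indicates enough positivity survives.
\item The lower bound is not linear in $\rho$. $R_k$, $\xi_i$ and $\kappa_i$ are affine in $\rho$, so terms such as $M^2R_k^2$ appear, and $C^2=S^2(S-\rho)^2$. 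In the repeated-eigenvalue case the appendix coefficients have positive $z^2$ terms, with $z=H-\rho$, so $\mathcal Q$ is convex, not concave, in $\rho$. An endpoint reduction is therefore unavailable.
\item The endpoints you chose are wrong. The hypothesis gives $0<\rho\le aH$, and $S-aH=b(b-a)+c(c-a)>0$ for nonscalar $A$. So $\rho\to S$ lies outside the admissible range, while $\rho\to0$ is never checked.
\end{itemize}

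\textbf{What is missing is the structure of the quartic.} The paper proceeds as follows.
\begin{itemize}
\item It expresses $\mathcal Q$ through the Gram entries $q_i,\widehat Q_{ij}$, using that $\widehat\Phi$ is linear in $\widehat Q$ for distinct eigenvalues.
\item It passes to cone coordinates $p,q,r$, which satisfy $p,q\ge0$ and $r\ge-\min\{p,q\}$.
\item A factorization identity eliminates $\widehat Q_{12}^2$ and $\widehat Q_{23}^2$, giving $\mathcal Q=G(p,q,r)+C^2\beta\widehat Q_{13}^2$.
\item Positivity follows from $\beta>0$ and from positivity of the ternary quadratic $G$ on that cone, checked through nine coefficient inequalities.
\item One of these inequalities is $G(1,1,-1)>0$, proved by concavity in $w$ for fixed $t$. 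The other eight are exact polynomial coefficient checks in $(x,y,z)$, valid for all $z=H-\rho\ge0$, so every admissible $\rho$ is covered at once.
\end{itemize}
$G$ is only shown positive on this cone, not on all of $\RR^3$. The cone constraints on the quadratic invariants are exactly what a crude sphere bound throws away.

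For repeated eigenvalues, the paper uses rotation invariance in the double eigenspace, with weights $3$ and $2$, to rule out mixed angular invariants. This reduces $\mathcal Q$ to a binary quadratic in $P_1,P_2$ with positive coefficients. Your proposal only notes that this case needs separate treatment.
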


We will prove Proposition~\ref{prop:positivity} in three steps.
In the first step, we use the Ricci commutation relation
to decompose the fourth-order tensor and the trace identities
to describe the remaining free components.
In the second step, we express $J$ in terms of these
components and reduce the problem to the minimization
of a quadratic form.
In the final step, we establish the strict positivity
of the resulting minimum, considering separately the cases
of distinct and repeated principal curvatures.

\medskip
\textbf{Step 1. Tensor decomposition and trace equations.}
\quad

Let $\widehat h_{ijk}$ be any
fully symmetric array satisfying the first two equations in
\eqref{eq:algebra-T}. Set
\[
  M=\sum_{i,j,k}\widehat h_{ijk}^{\,2}>0.
\]
Since $M>0$, we may normalize $\widehat h$ by setting
\[
  h_{ijk} = \sqrt{\frac{C}{M}}\, \widehat h_{ijk}.
\]
Then \(\sum_{i,j,k}h_{ijk}^{\,2}=C\),
while the first two equations in \eqref{eq:algebra-T} are preserved
under this rescaling. Hence $h_{ijk}$ satisfies all three conditions in
\eqref{eq:algebra-T}. Conversely, every array satisfying
\eqref{eq:algebra-T} is obtained in this way from a nonzero fully
symmetric array $\widehat h_{ijk}$ satisfying the first two trace
constraints.

Set
\[
  \widehat Q_{kl}=\sum_{r,s}\widehat h_{rsk}\widehat h_{rsl}
\]
and $q_i=\widehat Q_{ii}$. We have
\begin{equation}\label{eq:raw-gram}
 \sum_iq_i=M,\qquad
 \sum_{r,s}h_{rsk}h_{rsl}=\frac CM\widehat Q_{kl}.
\end{equation}

The array $h_{ijkl}$ is symmetric in the first three indices, but not necessarily symmetric in its last two indices. Instead,
the difference $h_{ijkl}-h_{ijlk}$ is prescribed by \eqref{eq:algebra-ricci}.
For each $i<j$, define
\begin{equation*}
\begin{aligned}
 \Omega_{iijj}=\Omega_{ijij}=\Omega_{jiij}
  &=\frac12\lambda_i\lambda_j(\lambda_i-\lambda_j),\\
 \Omega_{jjii}=\Omega_{jiji}=\Omega_{ijji}
  &=-\frac12\lambda_i\lambda_j(\lambda_i-\lambda_j),
\end{aligned}
\end{equation*}
and set all other components equal to zero. This defines every
component of $\Omega$ and gives symmetry in the first three indices.
If $i\ne j$, the difference $\Omega_{ijkl}-\Omega_{ijlk}$ can be
nonzero only when $(k,l)=(i,j)$ or $(j,i)$. For example,
\[
  \Omega_{ijij}-\Omega_{ijji}=\lambda_i\lambda_j(\lambda_i-\lambda_j).
\]
The other case has the opposite sign. When $i=j$, the difference is zero.
Hence
\begin{equation}\label{eq:omega-commutation}
 \Omega_{ijkl}-\Omega_{ijlk}
 =\lambda_i\lambda_j(\lambda_i-\lambda_j)
   (\delta_{ik}\delta_{jl}-\delta_{il}\delta_{jk}).
\end{equation}
Put
\begin{equation}\label{eq:decomposition}
 V_{ijkl}=M(h_{ijkl}-\Omega_{ijkl}).
\end{equation}
Subtracting \eqref{eq:omega-commutation} from \eqref{eq:algebra-ricci},
we obtain $V_{ijkl}=V_{ijlk}$. Since $V$ is also symmetric in the
first three indices, $V$ is invariant under $(12)$, $(23)$, and $(34)$.
These adjacent transpositions generate all permutations of four indices,
which proves that $V$ is fully symmetric. Conversely,
\[
  h_{ijkl}=\Omega_{ijkl}+M^{-1}V_{ijkl}
\]
satisfies \eqref{eq:algebra-ricci}
for every fully symmetric $V$.

We next calculate the two traces. If $k\ne l$, then
$\Omega_{iikl}=0$. For $k=l$, we have
\[
\begin{aligned}
 \sum_i\Omega_{iikk}
 &=\frac12\left(\lambda_k\sum_i\lambda_i^2-\lambda_k^2\sum_i\lambda_i\right)
   =\frac12(S\lambda_k-H\lambda_k^2),\\
 \sum_i\lambda_i\Omega_{iikk}
 &=\frac12\left(\lambda_k\sum_i\lambda_i^3-\lambda_k^2\sum_i\lambda_i^2\right)
   =\frac12(f_3\lambda_k-S\lambda_k^2).
\end{aligned}
\]
The summands vanish when $i=k$, so both sums include all $i$.
Combining \eqref{eq:algebra-U} and \eqref{eq:decomposition}, we obtain
\[
\begin{aligned}
 \sum_iV_{iikl}
 &=M\left\{\rho\lambda_k-H\lambda_k^2
            -\frac12(S\lambda_k-H\lambda_k^2)\right\}\delta_{kl},\\
 \sum_i\lambda_iV_{iikl}
 &=M\left\{-\frac CM\widehat Q_{kl}
            -\frac12(f_3\lambda_k-S\lambda_k^2)\delta_{kl}\right\},
\end{aligned}
\]
where the second equality uses \eqref{eq:raw-gram}. Set
\begin{equation*}
 R_i=\left(\rho-\frac S2\right)\lambda_i-\frac H2\lambda_i^2.
\end{equation*}
The trace equations therefore become
\begin{equation}\label{eq:scaled-traces}
\begin{aligned}
 \sum_iV_{iikl}&=MR_k\delta_{kl},\\
 \sum_i\lambda_iV_{iikl}
 &=-C\widehat Q_{kl}+\frac M2(S\lambda_k^2-f_3\lambda_k)\delta_{kl}.
\end{aligned}
\end{equation}
For $k=l$, subtracting $\lambda_k$ times the first equality from
the second eliminates $V_{kkkk}$. The right-hand side is
\[
\begin{aligned}
 -Cq_k+\frac M2(S\lambda_k^2-f_3\lambda_k)-M\lambda_kR_k
 &=M\left\{(S-\rho)\lambda_k^2+
     \frac{H\lambda_k^3-f_3\lambda_k}{2}\right\}-Cq_k.
\end{aligned}
\]
We write
\begin{equation}\label{eq:xi}
 \xi_i=M\left\{(S-\rho)\lambda_i^2+
       \frac{H\lambda_i^3-f_3\lambda_i}{2}\right\}-Cq_i.
\end{equation}
Summing over $i$ and using $\sum_iq_i=M$, we have
\[
  \sum_i\xi_i =M\left\{(S-\rho)S+\frac12(Hf_3-f_3H)\right\}-CM=0.
\]
Thus
\begin{equation}\label{eq:compatibility}
 \sum_i\xi_i=MC-C\sum_iq_i=0,
\end{equation}
which is the compatibility condition for \eqref{eq:scaled-traces}.

\begin{lemma}\label{lem:rank}
For a nonscalar \(A\), the twelve equations
\eqref{eq:scaled-traces} have rank eleven, and
\eqref{eq:compatibility} is the only compatibility relation.
The solution space is a product of four affine lines.
\end{lemma}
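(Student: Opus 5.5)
The plan is to sort the fifteen independent components of the fully symmetric array $V$ by index pattern. The twelve equations \eqref{eq:scaled-traces} then split into four blocks that share no unknowns, and I will compute the rank of each block separately. The components of $V$ are of five types:
\[
V_{1111},\ V_{2222},\ V_{3333};\qquad V_{1122},\ V_{1133},\ V_{2233};
\]
the six components $V_{iiij}$ with $i\ne j$; and $V_{1123},V_{1223},V_{1233}$. The equations are indexed by unordered pairs $k\le l$, with two equations for each pair.

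\textbf{Diagonal block ($k=l$).} For $k=l$ the equations involve only the six unknowns $V_{kkkk}$ and $V_{iijj}$. As in the text, subtracting $\lambda_k$ times the first equation from the second eliminates $V_{kkkk}$. With $x=V_{1122}$, $y=V_{1133}$, $z=V_{2233}$, this leaves
\[
(b-a)x+(c-a)y=\xi_1,\qquad (a-b)x+(c-b)z=\xi_2,\qquad (a-c)y+(b-c)z=\xi_3 .
\]
The rows of the coefficient matrix sum to zero, so the only obstruction to solvability is \eqref{eq:compatibility}. Its $2\times2$ minors include $(c-b)(c-a)$ and $(c-a)(b-a)$. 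Since $A$ is nonscalar we have $c>a$, and $b$ cannot equal both $a$ and $c$, so one of these minors is nonzero and the rank is exactly $2$. The first trace equations then determine each $V_{kkkk}$ uniquely. Hence this block of six equations has rank $5$, its single compatibility relation is \eqref{eq:compatibility}, and its solution set is an affine line.

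\textbf{Off-diagonal blocks ($k\ne l$).} For $(k,l)=(1,2)$, the right-hand sides are $0$ and $-C\widehat Q_{12}$, and the two equations read
\[
V_{1112}+V_{1222}+V_{1233}=0,\qquad aV_{1112}+bV_{1222}+cV_{1233}=-C\widehat Q_{12}.
\]
The coefficient rows $(1,1,1)$ and $(a,b,c)$ are independent exactly when $A$ is nonscalar, so the rank is $2$ with no compatibility condition, and the solution set is an affine line. The pair $(1,3)$ involves exactly $V_{1113},V_{1223},V_{1333}$, and the pair $(2,3)$ involves exactly $V_{1123},V_{2223},V_{2333}$. Each is again a rank-$2$ system in three unknowns, solved by an affine line.

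\textbf{Assembling the count.} These four blocks use pairwise disjoint sets of unknowns: $6+3+3+3=15$, so every component appears exactly once. The total rank is therefore $5+2+2+2=11$, the only compatibility relation is \eqref{eq:compatibility}, and the solution space is the product of four affine lines, one per block.

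The only point needing care is the diagonal block. There one must check that nonscalarity, and not full distinctness of $a,b,c$, already forces rank $2$; this is what covers repeated principal curvatures such as $a=b<c$. The minor computation above handles it.
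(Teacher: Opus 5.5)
Your proposal is correct and follows essentially the same route as the paper. Both split the fifteen components into the even block, where eliminating $V_{kkkk}$ leaves a $3\times3$ system whose rows sum to zero and which has a nonvanishing $2\times2$ minor when $A$ is nonscalar, and three off-diagonal blocks with independent coefficient rows $(1,1,1)$ and $(a,b,c)$, for a total rank of $5+2+2+2=11$. You use the minor $(c-a)(b-a)$ for the case $b=c$, where the paper uses $(a-b)(a-c)$; the two arguments are equivalent.
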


\begin{proof}
The components with all indices occurring an even number of
times are
\[
 (V_{1111},V_{2222},V_{3333},V_{1122},V_{1133},V_{2233}).
\]
The ordinary trace equations give
\[
  V_{iiii}=MR_i-\sum_{j\ne i}V_{iijj},\quad i=1,2,3.
\]
Substituting these values into the weighted traces, we obtain
\[
\begin{aligned}
 (b-a)V_{1122}+(c-a)V_{1133}&=\xi_1,\\
 (a-b)V_{1122}+(c-b)V_{2233}&=\xi_2,\\
 (a-c)V_{1133}+(b-c)V_{2233}&=\xi_3.
\end{aligned}
\]
Thus the coefficient matrix is
\[
 \begin{pmatrix}
 b-a&c-a&0\\
 a-b&0&c-b\\
 0&a-c&b-c
 \end{pmatrix}
\]
on the last three components. The sum of the three rows is zero.
If \(c>b\), the minor \((c-a)(c-b)\) is positive.
If \(c=b>a\), the minor \((a-b)(a-c)\) is positive.
Thus these six components are subject to five independent
equations, and the system is consistent if and only if
\eqref{eq:compatibility} holds.

For each pair \(i<j\), the remaining group has components
\((V_{11ij},V_{22ij},V_{33ij})\). The two coefficient rows
are \((1,1,1)\) and \((a,b,c)\), which are independent.
The total rank is therefore \(5+3\cdot2=11\).
\end{proof}

\Needspace{7\baselineskip}
\medskip
We now parametrize the third-order components. Put
\begin{equation*}
 d=(b-c,c-a,a-b),\quad
 D_0=\sum_i d_i^2,\quad J_2=\sum_i\lambda_i d_i^2.
\end{equation*}
All solutions of the first two equations in
\eqref{eq:algebra-T} have the form
\begin{equation}\label{eq:T-param}
 \widehat h_{iik}=d_i u_k,\qquad
 \widehat h_{123}=u_4,
\end{equation}
with full symmetry. Indeed, for each \(k\), the vector \((\widehat h_{11k},\widehat h_{22k},\widehat h_{33k})\)
is orthogonal to both \((1,1,1)\) and \((a,b,c)\), whose
common orthogonal complement is spanned by \(d\). The
component \(\widehat h_{123}\) is free. A component with two equal indices occurs three times in the
norm, and the component $\widehat h_{123}$ occurs six times. We obtain
\begin{equation}\label{eq:M}
 M=\sum_{k=1}^3(3D_0-2d_k^2)u_k^2+6u_4^2.
\end{equation}
Since $3D_0-2d_k^2=d_k^2+3\sum_{i\ne k}d_i^2>0$, the quadratic
form in \eqref{eq:M} is positive definite.

Define the fully symmetric array
\begin{equation*}
 \widehat\Phi_{ijkl}=\frac13\sum_m
 \bigl(\widehat h_{ijm}\widehat h_{klm}
       +\widehat h_{ikm}\widehat h_{jlm}
       +\widehat h_{ilm}\widehat h_{jkm}\bigr).
\end{equation*}

\begin{lemma}\label{lem:gram}
Suppose that \(a<b<c\), and put \(\theta=u_1^2+u_2^2+u_3^2\).
For pairwise distinct \(i,j,k\), one has
\begin{align}
 q_i&=D_0u_i^2+2d_i^2\sum_{j\ne i}u_j^2+2u_4^2,
 \qquad
 \widehat Q_{ij}=2d_k(d_ku_iu_j-u_ku_4),
 \label{eq:gram-components}\\
 \widehat\Phi_{iiii}&=d_i^2\theta,\qquad
 \widehat\Phi_{iijj}=\frac{q_i+q_j-q_k}{3}+d_id_j\theta,
 \label{eq:Phi-even}\\
 \widehat\Phi_{iiij}&=-\frac{d_i}{2d_k}\widehat Q_{ij},
 \qquad
 \widehat\Phi_{ijjj}=-\frac{d_j}{2d_k}\widehat Q_{ij},
 \qquad
 \widehat\Phi_{ijkk}=\frac16\widehat Q_{ij}.
 \label{eq:Phi-odd}
\end{align}
Moreover,
\begin{equation}\label{eq:theta-Q}
 4d_1d_2d_3\theta=\sum_i d_iq_i.
\end{equation}
Thus, for fixed distinct principal curvatures, \(\widehat\Phi\) depends linearly on \(\widehat Q\).
\end{lemma}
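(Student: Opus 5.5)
The plan is a direct computation from the parametrization \eqref{eq:T-param}. First I list all components of \(\widehat h\) explicitly. For pairwise distinct \(i,j,k\):
\[
\widehat h_{iii}=d_iu_i,\qquad \widehat h_{iij}=d_iu_j,\qquad \widehat h_{ijk}=u_4.
\]
Throughout I use the two elementary identities coming from \(d_1+d_2+d_3=0\):
\[
d_i+d_j=-d_k,\qquad D_0=2d_k^2-2d_id_j .
\]
The second follows from \(d_i^2+d_j^2=(d_i+d_j)^2-2d_id_j\). The hypothesis \(a<b<c\) is used only to guarantee \(d_1d_2d_3\neq0\), so that division by \(d_k\) is legitimate.

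\textbf{The Gram matrix \(\widehat Q\).} For \(q_i=\sum_{r,s}\widehat h_{rsi}^2\), I split the sum into diagonal pairs and off-diagonal pairs.
\begin{itemize}
\item The diagonal pairs \(r=s\) give \(\sum_r d_r^2u_i^2=D_0u_i^2\).
\item Each unordered off-diagonal pair is counted twice. The pair \(\{i,j\}\) gives \(\widehat h_{iji}=d_iu_j\), and similarly for \(\{i,k\}\). The pair \(\{j,k\}\) gives \(u_4\).
\end{itemize}
This yields the stated formula for \(q_i\).

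For \(\widehat Q_{ij}\) the same splitting gives
\[
\widehat Q_{ij}=D_0u_iu_j+2d_id_ju_iu_j+2(d_i+d_j)u_ku_4 .
\]
Since \(D_0+2d_id_j=2d_k^2\) and \(d_i+d_j=-d_k\), this becomes \(2d_k(d_ku_iu_j-u_ku_4)\).

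\textbf{Proof of \eqref{eq:theta-Q}.} I compute
\[
\sum_i d_iq_i=\sum_i(D_0d_i-2d_i^3)u_i^2+2\theta\sum_i d_i^3+2u_4^2\sum_i d_i .
\]
The last term vanishes. For the others I use \(\sum_i d_i^3=3d_1d_2d_3\) and
\[
D_0d_i-2d_i^3=-2d_id_jd_k .
\]
Together these give \(-2d_1d_2d_3\theta+6d_1d_2d_3\theta=4d_1d_2d_3\theta\).

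\textbf{The tensor \(\widehat\Phi\).} I expand its definition componentwise.
\begin{itemize}
\item \(\widehat\Phi_{iiii}=\sum_m\widehat h_{iim}^2=d_i^2\theta\).
\item \(\widehat\Phi_{iiij}=\sum_m\widehat h_{iim}\widehat h_{ijm}=d_i\bigl((d_i+d_j)u_iu_j+u_ku_4\bigr)=-d_i(d_ku_iu_j-u_ku_4)\), which equals \(-\frac{d_i}{2d_k}\widehat Q_{ij}\). The formula for \(\widehat\Phi_{ijjj}\) follows by symmetry.
\item \(\widehat\Phi_{ijkk}=\frac13\bigl(\sum_m\widehat h_{ijm}\widehat h_{kkm}+2\sum_m\widehat h_{ikm}\widehat h_{jkm}\bigr)\). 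Expanding with the same identities should give \(\frac16\widehat Q_{ij}\).
\item \(\widehat\Phi_{iijj}=\frac13\bigl(d_id_j\theta+2(d_i^2u_j^2+d_j^2u_i^2+u_4^2)\bigr)\).
\end{itemize}

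\textbf{Main obstacle.} The hardest step is matching \(\widehat\Phi_{iijj}\) with \(\frac{q_i+q_j-q_k}{3}+d_id_j\theta\). Substituting the formula for \(q\), the difference of the two sides is a quadratic form in \(u_1,\dots,u_4\). For each monomial \(u_i^2,u_j^2,u_k^2,u_4^2\) I will check that its coefficient reduces to zero using \(d_k=-d_i-d_j\) and \(D_0=2d_k^2-2d_id_j\). If a coefficient fails to cancel, I will instead use \eqref{eq:theta-Q} to trade \(\theta\) for \(\sum_i d_iq_i\).

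\textbf{Linearity.} By \eqref{eq:theta-Q} and \(d_1d_2d_3\neq0\), \(\theta\) is a linear function of the \(q_i\). Combined with the formulas above, every component of \(\widehat\Phi\) is then linear in \(\widehat Q\).
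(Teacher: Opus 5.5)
Your proposal is correct. For $\widehat Q$, \eqref{eq:theta-Q}, $\widehat\Phi_{iiii}$ and $\widehat\Phi_{iiij}$ it is the same direct substitution the paper makes.

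The two steps you left open both close.

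\textbf{The component $\widehat\Phi_{ijkk}$.} The two contractions are $\sum_m\widehat h_{ijm}\widehat h_{kkm}=d_k\bigl((d_i+d_j)u_iu_j+u_ku_4\bigr)=-\tfrac12\widehat Q_{ij}$ and $\sum_m\widehat h_{ikm}\widehat h_{jkm}=d_k^2u_iu_j+(d_i+d_j)u_ku_4=\tfrac12\widehat Q_{ij}$. Together they give $\tfrac13\bigl(-\tfrac12+1\bigr)\widehat Q_{ij}=\tfrac16\widehat Q_{ij}$.

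\textbf{The component $\widehat\Phi_{iijj}$.} Your coefficient check succeeds with no fallback. Using $d_k=-d_i-d_j$ one gets $q_i+q_j-q_k=2(d_i^2u_j^2+d_j^2u_i^2+u_4^2)-2d_id_j\theta$. This matches your expansion $\widehat\Phi_{iijj}=\tfrac13 d_id_j\theta+\tfrac23(d_i^2u_j^2+d_j^2u_i^2+u_4^2)$ exactly.

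\textbf{How the paper does this step.} The paper obtains $\widehat\Phi_{iijj}$ by a different route. The trace condition $\sum_i\widehat h_{iim}=0$ gives $\sum_i\widehat\Phi_{iikl}=\tfrac23\widehat Q_{kl}$. Applying this to the three diagonal entries, with $\widehat\Phi_{kkkk}=d_k^2\theta$, yields a small linear system. Its solution is $\tfrac13(q_i+q_j-q_k)+\tfrac12(d_k^2-d_i^2-d_j^2)\theta$, and this is the stated formula because $d_k^2-d_i^2-d_j^2=2d_id_j$.

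That route avoids the monomial bookkeeping and explains where the combination $q_i+q_j-q_k$ comes from. Yours is more mechanical but fully self-contained.
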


\begin{proof}
Substituting \eqref{eq:T-param} into the definition of
$\widehat Q_{kl}$, we obtain \eqref{eq:gram-components}. For instance, the coefficient of
\(u_iu_j\) in \(\widehat Q_{ij}\) is \(D_0+2d_id_j=2d_k^2\), and the coefficient of \(u_ku_4\)
is \(2(d_i+d_j)=-2d_k\). Since \(D_0-2d_i^2=-2d_jd_k\),
\[
  q_i=2d_i^2\theta-2d_jd_ku_i^2+2u_4^2.
\]
Multiplication by \(d_i\) and summation, using
\(\sum_i d_i=0\) and \(\sum_i d_i^3=3d_1d_2d_3\), gives
\eqref{eq:theta-Q}.

The definition of \(\widehat\Phi\) gives
\[
  \widehat\Phi_{iiii}=\sum_m\widehat h_{iim}^2=d_i^2\theta.
\]
The trace condition on \(\widehat h\) also gives
\[
  \sum_i\widehat\Phi_{iikl}=\frac23\widehat Q_{kl}.
\]
Apply this to the three diagonal entries and solve for \(\widehat\Phi_{1122},\widehat\Phi_{1133},\widehat\Phi_{2233}\)
to obtain \eqref{eq:Phi-even}. Finally,
\[
  \widehat\Phi_{iiij} =d_i\{(d_i+d_j)u_iu_j+u_ku_4\} =-\frac{d_i}{2d_k}\widehat Q_{ij}.
\]
The formula for \(\widehat\Phi_{ijjj}\) follows in the same
way. In \(3\widehat\Phi_{ijkk}\), the first contraction is
\(-\widehat Q_{ij}/2\), and the other two are each
\(\widehat Q_{ij}/2\). This proves \eqref{eq:Phi-odd}.
\end{proof}

\Needspace{8\baselineskip}
The following identities will simplify the terms involving
\(\widehat h_{ijk}\). They also hold when two principal
curvatures coincide.

\begin{lemma}\label{lem:contractions-Q}
For a fully symmetric \(\widehat h_{ijk}\) satisfying the two
trace conditions in \eqref{eq:algebra-T}, one has
\begin{equation*}
\begin{aligned}
 \sum_{ijk}\lambda_i\lambda_j\widehat h_{ijk}^2
 &=\sum_i\left\{2H\lambda_i-2\lambda_i^2
                    -\frac{H^2-S}{2}\right\}q_i,\\
 \sum_{ijk}\lambda_i^2\lambda_j\widehat h_{ijk}^2
 &=\sum_i\left\{-H\lambda_i^2+H^2\lambda_i
                    -\frac{2H^3-3HS+f_3}{6}\right\}q_i.
\end{aligned}
\end{equation*}
\end{lemma}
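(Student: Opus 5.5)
The plan is to verify both identities directly from the parametrization \eqref{eq:T-param}, reducing everything to sums over the entries $\widehat h_{iik}=d_iu_k$ and $\widehat h_{123}=u_4$.

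\textbf{Validity of the parametrization.} For nonscalar $A$, the vectors $(1,1,1)$ and $(a,b,c)$ are independent, so their common orthogonal complement is one-dimensional and spanned by $d=(b-c,c-a,a-b)\neq0$. This holds also when two principal curvatures coincide; for instance, if $a=b<c$ then $d=(b-c,c-b,0)$. Hence \eqref{eq:T-param} describes every admissible $\widehat h$ without assuming $a<b<c$, and both identities become equalities of quadratic forms in $(u_1,\dots,u_4)$ with coefficients polynomial in $a,b,c$. Since $d\ne0$, the $u_k$ are linear functions of $\widehat h$. So it suffices to compare the coefficients of $u_k^2$, of $u_ku_4$, and of $u_iu_j$ on both sides.

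\textbf{Regrouping by index pattern.} I would write $q_i$ as in Lemma~\ref{lem:gram}, namely $q_i=D_0u_i^2+2d_i^2\sum_{j\ne i}u_j^2+2u_4^2$. This computation uses only \eqref{eq:T-param}, not distinctness. Next I would split $\sum_{ijk}w(i,j,k)\widehat h_{ijk}^2$ according to the index pattern:
\begin{itemize}
\item the terms with $i=j=k$;
\item the terms with exactly two equal indices, each entry $\widehat h_{iik}$ ($i\ne k$) appearing three times with weights $w(i,i,k)$, $w(i,k,i)$, $w(k,i,i)$;
\item the terms with all indices distinct, each carrying $u_4^2$ with total weight $\sum_{\sigma}w(\sigma)$ over the six permutations.
\end{itemize}
For $w=\lambda_i\lambda_j$, the $u_4^2$ coefficient is $2(ab+bc+ca)=H^2-S$. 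For $w=\lambda_i^2\lambda_j$ it is $2\sum_{i\ne j}\lambda_i^2\lambda_j=2(HS-f_3)$. On the right-hand side, the $u_4^2$ coefficient is $2\sum_i c_i$, where $c_i$ is the bracketed coefficient. Checking these, e.g.\ $2(2H^2-2S-\tfrac32(H^2-S))=H^2-S$, is the first consistency test.

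\textbf{Coefficients of $u_k^2$.} The left side contributes $u_k^2\sum_i d_i^2\,g_k(\lambda_i)$ for suitable $g_k$. For the first identity, $g_k(\lambda_i)=\lambda_i^2$ if $i=k$, and $g_k(\lambda_i)=\lambda_i^2+2\lambda_i\lambda_k$ if $i\ne k$. The right side contributes $u_k^2\bigl(D_0c_k+2\sum_{i\ne k}d_i^2c_i\bigr)$. I would reduce both to symmetric expressions using the elementary relations $\sum_i d_i=0$, $\sum_i\lambda_id_i=0$, and $d_i^2=D_0/2-d_jd_k$ (with $\{i,j,k\}$ distinct). I would also use that $d_jd_k$ and $\lambda$ are tied through $(\lambda_j-\lambda_i)(\lambda_i-\lambda_k)$, so that both sides become polynomials in $\lambda_k$, $H$, $S$ and $f_3$.

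\textbf{Main obstacle.} The hardest step is this coefficient matching for $u_k^2$, together with confirming that no cross terms $u_ku_4$ or $u_iu_j$ survive. On the left, squares of single entries produce no cross terms at all, while on the right the $q_i$ contain none either, so the cross-term check should be immediate. For the $u_k^2$ coefficient, I would first try to express $\sum_i d_i^2\lambda_i^m$ for $m=0,1,2,3$ via Newton identities; $J_2$ is the $m=1$ case. I would then verify the resulting identity as a polynomial identity in $(a,b,c)$, symmetric after accounting for $k$. The constants $-(H^2-S)/2$ and $-(2H^3-3HS+f_3)/6$ are forced by the $u_4^2$ match, which serves as a check on the linear parts $2H\lambda_i-2\lambda_i^2$ and $-H\lambda_i^2+H^2\lambda_i$.
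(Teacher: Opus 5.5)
Your route is correct but genuinely different from the paper's. The paper never parametrizes $\widehat h$. For each fixed $k$ it applies the Cayley--Hamilton identity to $A+sB$, with $B=(\widehat h_{ijk})_{i,j}$, and reads off the $s^2$ coefficient; this is where $\operatorname{tr}B=\operatorname{tr}(AB)=0$ are used. It then multiplies by $A$ or $A^2$, takes traces and sums over $k$. Full symmetry converts $\sum_{ijk}\lambda_i^m\widehat h_{ijk}^2$ into $\sum_i\lambda_i^mq_i$, and for the second identity the characteristic polynomial of $A$ eliminates $\lambda_i^3$. That argument is short, coordinate-free, and indifferent to eigenvalue multiplicities. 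Your pull-back to $u_1,\dots,u_4$ via \eqref{eq:T-param} is longer but entirely elementary. Every entry of $\widehat h$ is a single monomial, $d_iu_k$ or $u_4$, so both sides are diagonal quadratic forms in $u$. Each identity therefore reduces to one $u_4^2$ check and one $u_k^2$ check, since only $d_i^2$ enters and the three values of $k$ differ by relabeling. Your route also reuses the bookkeeping of Lemma~\ref{lem:gram}, whose formula for $q_i$ indeed does not need distinct eigenvalues. It needs $A$ nonscalar, which is the standing hypothesis; for scalar $A$ both sides are trivially $\lambda^2M$ and $\lambda^3M$.

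There are two points to fix. First, your $u_4^2$ count for the second identity has a spurious factor $2$. Each ordered pair $(i,j)$ with $i\ne j$ comes from exactly one permutation of $(1,2,3)$, just as in your correct count $\sum_{i\ne j}\lambda_i\lambda_j=H^2-S$. So the left coefficient is $\sum_{i\ne j}\lambda_i^2\lambda_j=HS-f_3$, and the right side gives $2\sum_ic_i=HS-f_3$; with your value $2(HS-f_3)$ the consistency test would fail. Second, in the $u_k^2$ step the two sides are not literally equal once written in $\lambda_k$, $H$, $\sigma_2=\frac{H^2-S}{2}$ and $\sigma_3=abc$. The match also needs the characteristic polynomial $\chi(t)=t^3-Ht^2+\sigma_2t-\sigma_3$ evaluated at $t=\lambda_k$. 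Concretely, use
\[
D_0=2(H^2-3\sigma_2),\qquad J_2=H\sigma_2-9\sigma_3,\qquad \sum_i d_i^2\lambda_i^2=2\sigma_2^2-6H\sigma_3,
\]
\[
\sum_i d_i^2\lambda_i^3=H\sigma_2^2-4H^2\sigma_3+3\sigma_2\sigma_3,\qquad d_k^2=H^2+2H\lambda_k-3\lambda_k^2-4\sigma_2.
\]
With these, the right-hand minus left-hand $u_k^2$ coefficient becomes $-6(3\lambda_k-H)\chi(\lambda_k)$ in the first identity. In the second identity it becomes $-(6\lambda_k^2+8H\lambda_k-4H^2+2\sigma_2)\chi(\lambda_k)$. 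Both vanish, so your plan does close.
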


\begin{proof}
For fixed $k$, let $B=(\widehat h_{ijk})_{i,j}$.
Then $\operatorname{tr}B=\operatorname{tr}(AB)=0$.
Recall that the Cayley--Hamilton identity for a
$3\times3$ matrix $T$ is
\[
T^3-(\operatorname{tr}T)T^2
+\frac12\bigl((\operatorname{tr}T)^2
-\operatorname{tr}(T^2)\bigr)T
-(\det T)I=0.
\]
Applying this identity to $T=A+sB$ and comparing
the coefficients of $s^2$, we obtain
\[
\begin{aligned}
0={}&AB^2+BAB+B^2A-HB^2
-\frac12\operatorname{tr}(B^2)A-\left\{\operatorname{tr}(AB^2)
-\frac H2\operatorname{tr}(B^2)\right\}I.
\end{aligned}
\]
Multiply by \(A\) and take the trace. Summation over \(k\)
gives
\[
 \sum_{ijk}(2\lambda_i^2+\lambda_i\lambda_j)
                       \widehat h_{ijk}^2
 =\sum_i\left\{2H\lambda_i-\frac{H^2-S}{2}\right\}q_i,
\]
which proves the first identity. Multiplication by \(A^2\)
instead gives
\[
 \sum_{ijk}(2\lambda_i^3+\lambda_i^2\lambda_j)
                       \widehat h_{ijk}^2
 =\sum_i\left\{H\lambda_i^2+S\lambda_i
                     +\frac{f_3-HS}{2}\right\}q_i.
\]
Now use
\[
  \lambda_i^3-H\lambda_i^2+\frac{H^2-S}{2}\lambda_i-abc=0, \qquad 6abc=H^3-3HS+2f_3.
\]
This yields the second identity.
\end{proof}

\medskip
\noindent\textbf{Step 2. Expansion and minimization.}
\quad

Put \(w_i=\lambda_i+H/2>0\).
The correction \(\Omega\) has zero contraction with \(\widehat h_{ijk}\widehat h_{rsk}\). Indeed, for \(i<j\),
the positions \((iijj,ijij,jiij)\) and
\((jjii,jiji,ijji)\) have opposite corrections. The
contractions over either set of three positions give
\[
  \sum_m\bigl(\widehat h_{iim}\widehat h_{jjm} +2\widehat h_{ijm}^2\bigr).
\]
 Since \(V\) is fully
symmetric, it follows that
\begin{equation*}
 M^2\sum_{ijkrs}h_{ijk}h_{rsk}h_{ijrs}
       =C\sum_{ijrs}V_{ijrs}\widehat\Phi_{ijrs}.
\end{equation*}
Set
\begin{equation*}
\begin{aligned}
 K_\Omega&=\frac34\sum_{i<j}(w_i+w_j)
                  \lambda_i^2\lambda_j^2(\lambda_i-\lambda_j)^2,\\
 \kappa_i&=-\frac{33H}{2}\lambda_i^2
      +\frac{27H^2+3S}{2}\lambda_i
      -\frac{5H^3}{2}+HS+H\rho-9abc.
\end{aligned}
\end{equation*}
Applying Lemma~\ref{lem:contractions-Q} to the terms independent
of \(V\), we obtain
\begin{align*}
 &\sum_{ijk}
 \bigl\{S-2\rho+3(\lambda_i^2-2\lambda_i\lambda_j)\bigr\}
       \widehat h_{ijk}^2
 =\sum_i(15\lambda_i^2-12H\lambda_i+3H^2-2S-2\rho)q_i,\\
 &\sum_{ijk}
 (3S\lambda_i+6\lambda_i^2\lambda_j-3\lambda_i^3-f_3)
       \widehat h_{ijk}^2
 =\sum_i\left\{-9H\lambda_i^2
      +\frac32(5H^2+S)\lambda_i-H^3-9abc\right\}q_i.
\end{align*}
Subtracting \(H/2\) times the first identity from the second
gives \(\sum_i\kappa_iq_i\). The fixed correction \(\Omega\)
contributes \(K_\Omega M^2\). Consequently,
\begin{equation}\label{eq:quadratic}
 M^2J
 =K_\Omega M^2+CM\sum_i\kappa_iq_i
       +\sum_\alpha(m_\alpha V_\alpha^2+\ell_\alpha V_\alpha),
\end{equation}
where the last sum runs over the fifteen independent
components of $V$. The corresponding quadratic and
linear coefficients are given by
\[
\begin{aligned}
m_{iiii}&=w_i,\\
m_{iijj}&=3(w_i+w_j), && i<j,\\
m_{iiij}&=3w_i+w_j, && i\ne j,\\
m_{ijkk}&=3(w_i+w_j+2w_k),
&& i,j,k\text{ distinct},\ i<j,
\end{aligned}
\]
and
\[
\begin{aligned}
\ell_{iiii}&=-3C\widehat\Phi_{iiii},\\
\ell_{iijj}&=-18C\widehat\Phi_{iijj}
-3M\lambda_i\lambda_j(\lambda_i-\lambda_j)^2,
&& i<j,\\
\ell_{iiij}&=-12C\widehat\Phi_{iiij},
&& i\ne j,\\
\ell_{ijkk}&=-36C\widehat\Phi_{ijkk},
&& i,j,k\text{ distinct},\ i<j.
\end{aligned}
\]
For example, $V_{iijj}$ has six distinct permutations,
three with last index $i$ and three with last index $j$.
Hence its quadratic coefficient is $3(w_i+w_j)$.
The cross term involving $\Omega$ contributes
\[
3M(w_j-w_i)\lambda_i\lambda_j(\lambda_i-\lambda_j)V_{iijj}
=-3M\lambda_i\lambda_j(\lambda_i-\lambda_j)^2V_{iijj}.
\]
The contraction with $\widehat\Phi$ contributes
$-18C\widehat\Phi_{iijj}V_{iijj}$.
These give the stated coefficients $m_{iijj}$ and
$\ell_{iijj}$.
The remaining coefficients follow similarly from the
multiplicities $1$, $4$, and $12$ of
$V_{iiii}$, $V_{iiij}$, and $V_{ijkk}$, respectively.

Since $w_i>0$, all $m_\alpha$ are positive.
By Lemma~\ref{lem:rank}, the minimization under
\eqref{eq:scaled-traces} reduces to completing four
scalar squares.

Fix a nonzero array $(\widehat h_{ijk})$, and let
$\mathcal Q=\min M^2J$, where the minimum is taken
over all $V$ satisfying \eqref{eq:scaled-traces}.
We first consider the case $a<b<c$.
The cases of two equal principal curvatures
will be treated in Lemma~\ref{lem:repeated}.

\Needspace{7\baselineskip}
\medskip
We first minimize the terms involving the off-diagonal components.

For a pair \(i<j\), let \(k\) be the remaining index and put
\[
 \delta=\lambda_j-\lambda_i>0,\qquad
 D_k=5(2H+\lambda_k)\delta^2
                  +(2H-\lambda_k)(3\lambda_k-H)^2>0.
\]
Take \(V_{kkij}\) as the free component. The two trace
equations give
\[
 V_{iiij}
 =\frac{C\widehat Q_{ij}+(\lambda_k-\lambda_j)V_{kkij}}{\delta},
 \qquad
 V_{jjij}
 =-\frac{C\widehat Q_{ij}+(\lambda_k-\lambda_i)V_{kkij}}{\delta}.
\]
Substituting these expressions into the quadratic expansion
of $M^2J$ above and using \eqref{eq:Phi-odd}, we obtain
the following quadratic polynomial in $V_{kkij}$:
\[
\begin{split}
 &\frac{D_k}{\delta^2}V_{kkij}^2
 +\frac{C\{5(H+\lambda_k)(3\lambda_k-H)-\delta^2\}}
             {\delta^2}\widehat Q_{ij}V_{kkij}
       +\frac{2C^2(H+7\lambda_k)}{\delta^2}\widehat Q_{ij}^2.
\end{split}
\]
Since $D_k>0$, completing the square gives
\[
 \frac{D_k}{\delta^2}\left(V_{kkij}
 +\frac{C\{5(H+\lambda_k)(3\lambda_k-H)-\delta^2\}}{2D_k}
       \widehat Q_{ij}\right)^2+C^2\gamma_{ij}\widehat Q_{ij}^2.
\]
The minimum is therefore $C^2\gamma_{ij}\widehat Q_{ij}^2$, where
\begin{equation}\label{eq:gamma-odd}
 \gamma_{ij}
 =\frac{2(H+7\lambda_k)}{\delta^2}
 -\frac{\{5(H+\lambda_k)(3\lambda_k-H)-\delta^2\}^2}
             {4\delta^2D_k}.
\end{equation}

\Needspace{7\baselineskip}
\medskip
For the even components, put $x=b-a>0$ and $y=c-b>0$.\label{subsec:even}
Then \(d=(-y,x+y,-x)\).
For \(\mathbf q=(q_1,q_2,q_3)\), use
\begin{equation*}
 M=q_1+q_2+q_3,\qquad
 \theta=\frac{\sum_i d_iq_i}{4d_1d_2d_3},
\end{equation*}
and define \(\xi_i\) by \eqref{eq:xi}.
The following expressions are linear in \(q_1,q_2,q_3\):
\begin{equation}\label{eq:direct-v}
\begin{alignedat}{2}
  v_{11}&=MR_1-\frac{\xi_1}{x+y},&\qquad v_{22}&=MR_2-\frac{\xi_2}{y},\\
  v_{33}&=MR_3-\frac{\xi_1}{x+y}-\frac{\xi_2}{y},& v_{12}&=0,\\
  v_{13}&=\frac{\xi_1}{x+y},& v_{23}&=\frac{\xi_2}{y},
\end{alignedat}
\end{equation}
where \(v_{ij}\) denotes the \(iijj\) component of a
particular solution. The ordinary traces of this solution are \(MR_i\).
The differences between the weighted and ordinary trace
equations are given by
\[
 xv_{12}+(x+y)v_{13}=\xi_1,\qquad
 -xv_{12}+yv_{23}=\xi_2,\qquad
 -(x+y)v_{13}-yv_{23}=\xi_3.
\]
The first two follow from \eqref{eq:direct-v}, and the third
follows from \(\sum_i\xi_i=0\).

For convenience, write
$\ell_{ii}=\ell_{iiii}$ and
$\ell_{ij}=\ell_{iijj}$ for $i<j$.
Using the formulas for $\ell_{iiii}$ and $\ell_{iijj}$
given above, together with \eqref{eq:Phi-even}, we obtain
\begin{equation*}
\begin{aligned}
 \ell_{ii}&=-3Cd_i^2\theta,\\
 \ell_{ij}&=-3M\lambda_i\lambda_j(\lambda_i-\lambda_j)^2
            -6C(q_i+q_j-q_k)-18Cd_id_j\theta
            \quad(i<j),
\end{aligned}
\end{equation*}
where \(k\) is the remaining index. Put
\[
  m_{ii}=w_i,\qquad m_{ij}=3(w_i+w_j)\quad(i<j), \qquad n_{ij}=-d_id_j\quad(i\leq j).
\]
We extend $n_{ij}=-d_id_j$ symmetrically to all $i,j$. Since
$\sum_i d_i=\sum_i\lambda_i d_i=0$, we have
\[
\begin{aligned}
  \sum_i n_{ik}&=-d_k\sum_i d_i=0,\\
  \sum_i\lambda_i n_{ik}&=-d_k\sum_i\lambda_i d_i=0.
\end{aligned}
\]
Thus $n$ is a nonzero solution of the homogeneous even trace equations.
By Lemma~\ref{lem:rank}, every even solution has the form
$V_{iijj}=v_{ij}+sn_{ij}$, where $s\in\RR$. The coefficient of \(s^2\) is
\begin{equation}\label{eq:scalar-m0}
 m_0=\sum_iw_id_i^4+3\sum_{i<j}(w_i+w_j)d_i^2d_j^2=D_0(HD_0+2J_2)>0.
\end{equation}
To verify the second equality, use
\(\sum_i d_i=\sum_i\lambda_i d_i=0\) and
\[
  d_i^3-\frac{D_0}{2}d_i-d_1d_2d_3=0.
\]
Multiplying by \(d_i\) and by \(\lambda_i d_i\), respectively,
and summing over \(i\), we obtain
\[
  \sum_i d_i^4=\frac{D_0^2}{2},\qquad \sum_i\lambda_i d_i^4=\frac{D_0J_2}{2}.
\]
Thus the first line of \eqref{eq:scalar-m0} equals
\[
 3D_0\left(J_2+\frac H2D_0\right)
 -2\left(\frac{D_0J_2}{2}+\frac H4D_0^2\right)
 =D_0(HD_0+2J_2).
\]

Let $
L=\sum_{i\leq j}n_{ij}(2m_{ij}v_{ij}+\ell_{ij})$.
The terms involving the free parameter are
\[
  m_0s^2+Ls =m_0\left(s+\frac{L}{2m_0}\right)^2-\frac{L^2}{4m_0}.
\]
Consequently the even terms, together with the constant
terms in \eqref{eq:quadratic}, have minimum
\begin{equation}\label{eq:direct-even}
\begin{split}
 E_0(\mathbf q)
 =K_\Omega M^2+CM\sum_i\kappa_iq_i+\sum_{i\leq j}(m_{ij}v_{ij}^2+\ell_{ij}v_{ij})
       -\frac{L^2}{4m_0}.
\end{split}
\end{equation}
Since the four groups of components are independent under the trace
equations, the four minima can be attained simultaneously. Combining
the even minimum with the three off-diagonal minima, we obtain
\begin{equation}\label{eq:homogenization}
 \mathcal Q
 =E_0(\mathbf q)+C^2\sum_{i<j}\gamma_{ij}\widehat Q_{ij}^2.
\end{equation}
For fixed principal curvatures, \(E_0\) is quadratic in the
three diagonal entries. The expression \(\mathcal Q\) is
therefore homogeneous of degree four in \(u_1,u_2,u_3,u_4\).

To reduce the remaining expression, suppose that \(a<b<c\). By \eqref{eq:gram-components}, the quantities
\begin{equation}\label{eq:cone-coordinates}
\begin{aligned}
 p&=q_1-2y^2\theta=2x(x+y)u_1^2+2u_4^2,\\
 q&=q_3-2x^2\theta=2y(x+y)u_3^2+2u_4^2,\\
 r&=2(x+y)^2\theta-q_2=2xyu_2^2-2u_4^2
\end{aligned}
\end{equation}
satisfy $p,q\geq0$ and $r\geq-\min\{p,q\}$.

Indeed, \(p+r=2x((x+y)u_1^2+yu_2^2)\geq0\) and \(q+r=2y((x+y)u_3^2+xu_2^2)\geq0\).
If \(p=q=r=0\), then \eqref{eq:cone-coordinates} gives
\(u_1=u_2=u_3=u_4=0\). Thus \(M>0\) implies
\((p,q,r)\ne(0,0,0)\).

The off-diagonal entries satisfy
\begin{align}
 \widehat Q_{12}^2
 &=-\frac{x^3}{(x+y)^3}\widehat Q_{13}^2
   +\frac{x^2}{y(x+y)}p(q+r),\notag\\
 \widehat Q_{23}^2
 &=-\frac{y^3}{(x+y)^3}\widehat Q_{13}^2
   +\frac{y^2}{x(x+y)}q(p+r).
 \label{eq:eliminate-Q}
\end{align}
For the first identity, use
\[
  \widehat Q_{12}=2x(xu_1u_2+u_3u_4),\qquad \widehat Q_{13}=2(x+y)((x+y)u_1u_3-u_2u_4)
\]
and the elementary factorization
\[
\begin{split}
 &(x+y)(xu_1u_2+u_3u_4)^2
       +x((x+y)u_1u_3-u_2u_4)^2\\
 &\hspace{20mm}=(x(x+y)u_1^2+u_4^2)(xu_2^2+(x+y)u_3^2).
\end{split}
\]
The second identity follows in the same way, with
\(x,u_1\) and \(y,u_3\) interchanged.
The diagonal entries are recovered from
\begin{equation}\label{eq:recover-diagonal}
\begin{aligned}
 \theta&=\frac{yp+(x+y)r+xq}{2xy(x+y)},\\
 (q_1,q_2,q_3)
   &=(2y^2\theta+p,\ 2(x+y)^2\theta-r,\ 2x^2\theta+q).
\end{aligned}
\end{equation}
In particular, \(q_1,q_2,q_3\) depend linearly on \(p,q,r\).

Substituting \eqref{eq:eliminate-Q} into
\eqref{eq:homogenization} gives
\begin{equation}\label{eq:Q-cone-reduction}
 \mathcal Q=G(p,q,r)+C^2\beta\widehat Q_{13}^2,
 \qquad
 \beta=\gamma_{13}
       -\frac{x^3\gamma_{12}+y^3\gamma_{23}}{(x+y)^3},
\end{equation}
where
\begin{equation}\label{eq:direct-G}
\begin{split}
 G(p,q,r)={}&E_0(q_1,q_2,q_3)+C^2\left\{\frac{x^2\gamma_{12}}{y(x+y)}p(q+r)
       +\frac{y^2\gamma_{23}}{x(x+y)}q(p+r)\right\}.
\end{split}
\end{equation}
Here \(q_1,q_2,q_3\) are given by
\eqref{eq:recover-diagonal}. Since \(E_0\) is quadratic,
we may write
\begin{equation}\label{eq:G-form}
 G(p,q,r)=c_{200}p^2+c_{110}pq+c_{101}pr
         +c_{020}q^2+c_{011}qr+c_{002}r^2.
\end{equation}
The coefficients of $G$ are given by
\begin{equation}\label{eq:polarization}
\begin{aligned}
c_{200}&=G(1,0,0),&
\qquad c_{110}&=G(1,1,0)-c_{200}-c_{020},\\[5pt]
c_{020}&=G(0,1,0),&
\qquad c_{101}&=G(1,0,1)-c_{200}-c_{002},\\[5pt]
c_{002}&=G(0,0,1),&
\qquad c_{011}&=G(0,1,1)-c_{020}-c_{002}.
\end{aligned}
\end{equation}
\medskip
\noindent\textbf{Step 3. Positivity of the minimum.}
\quad

We now establish the strict positivity of the minimum
obtained in Step 2. By \eqref{eq:Q-cone-reduction}, it suffices to prove
that $G>0$ and $\beta>0$. We consider first the case
of three distinct principal curvatures, $a<b<c$.
To simplify the positivity estimates, we introduce
the following scale-invariant quantities:
\begin{equation}\label{eq:spectral-variables}
t=1-\frac{3(ab+ac+bc)}{H^2},
\qquad
w=\frac{(3a-H)(3b-H)(3c-H)}{H^3}.
\end{equation}
Since $a,b,c>0$ are distinct, we have
\[
0<t=
\frac{(a-b)^2+(b-c)^2+(c-a)^2}{2H^2}<1.
\]
These quantities will be used in the following two
lemmas to establish the required positivity.
The cases of two equal principal curvatures will
be considered separately.

\begin{lemma}\label{lem:beta}
For \(0<a<b<c\), the coefficient \(\beta\) in
\eqref{eq:Q-cone-reduction} is positive.
\end{lemma}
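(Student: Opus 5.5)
The coefficient \(\beta\) depends only on \(a,b,c\) through \(\gamma_{12},\gamma_{13},\gamma_{23}\) in \eqref{eq:gamma-odd}, and each \(\gamma_{ij}\) is homogeneous of degree \(-1\) in the principal curvatures. The plan is therefore to normalize by scaling (using \(H\), or \(a=1\) as suggested earlier) and write \(\beta\) as a single rational function of two variables. Clearing the positive denominators \(\delta^2D_k\) and \((x+y)^3\) reduces the claim to the positivity of one polynomial \(P\) on the relevant region.

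Concretely, I would set \(H=1\) and express everything through \(x=b-a\), \(y=c-b\), or equivalently through the spectral variables \(t,w\) of \eqref{eq:spectral-variables}. For each pair \(i<j\) with remaining index \(k\) I would record \(\delta\) and \(\lambda_k\); note that \(3\lambda_k-H\) appears in both \(D_k\) and the numerator of \(\gamma_{ij}\).

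Multiplying \(\gamma_{ij}\) by \(4\delta^2D_k\) gives
\[
 4\delta^2D_k\gamma_{ij}=8(H+7\lambda_k)D_k-\bigl\{5(H+\lambda_k)(3\lambda_k-H)-\delta^2\bigr\}^2 .
\]
First I would check whether this is already positive for each pair, and in particular for the pair \((1,3)\), whose remaining index carries the middle curvature \(b\). Then
\[
 \beta=\gamma_{13}-\frac{x^3\gamma_{12}+y^3\gamma_{23}}{(x+y)^3}
\]
is a weighted difference. The weights satisfy \(x^3+y^3\le(x+y)^3\), so it would suffice to show \(\gamma_{13}>\max(\gamma_{12},\gamma_{23})\), or at least the weighted inequality. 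Failing that, I would compute the numerator of \(\beta\) directly. That numerator is symmetric under the swap \((a,x)\leftrightarrow(c,y)\) induced by \(\lambda\mapsto H-\lambda\) only up to the constraint \(a>0\), so in practice I would expand it as a polynomial in \(x,y\) with \(a\) fixed.

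The hypothesis \(aH\ge\rho\) plays no role here, since \(\beta\) involves only \(a,b,c\). So the positivity must hold for all \(0<a<b<c\). In the \(a=1\) normalization this means all coefficients of the polynomial in \(x,y\ge0\) should be positive, after possibly grouping terms.

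The main obstacle is this polynomial step. If the expansion in \(x,y\) has mixed signs, I would switch to the variables \((t,w)\). There the admissible region is \(0<t<1\) with \(w\) bounded by the discriminant constraint \(27w^2\le 4t^3\) type condition, together with positivity of \(a\). I would then argue that \(P\) is concave in \(w\) (or quadratic with negative leading coefficient). That reduces positivity to the boundary curves: the repeated-root boundary and the boundary \(a=0\). On those curves \(P\) becomes a one-variable polynomial, which I would verify positive by exact rational arithmetic (Sturm sequences), as the paper does for its coefficient inequalities.
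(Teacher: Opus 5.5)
Your overall plan (scale, clear the denominators $\delta^2D_k$ and $(x+y)^3$, and prove that what remains is positive) is the right kind of argument. But the proposal never actually establishes positivity, and the concrete routes you offer would fail.

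\emph{The shortcut is false.} The inequality $\gamma_{13}>\max(\gamma_{12},\gamma_{23})$ fails at $(a,b,c)=(1,2,3)$, where
$\gamma_{12}=54-\tfrac{17956}{624}\approx25.2$, $\gamma_{13}=10-\tfrac{1}{280}\approx10.0$, and $\gamma_{23}=26-\tfrac{11236}{656}\approx8.87$, while $\beta\approx5.73$.

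\emph{The adjacent-pair $\gamma_{ij}$ are not positive.} As $b\to a^+$ with $a<c$ fixed, $8(H+7c)D_3-\{5(H+c)(3c-H)-\delta^2\}^2\to-144(c-a)^4$. Hence $\gamma_{12}\to-\infty$, and $\beta$ stays positive only because this term carries the weight $x^3$.

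\emph{The $(t,w)$ fallback does not work as described.} $\beta$ is not a rational function of $(t,w)$, because it carries the non-symmetric factor $xy/(x+y)^2$. Once that factor is split off, the remaining numerator is a quadratic in $w$ with positive leading coefficient $25+42t$. It is therefore convex rather than concave, and checking only the boundary curves would not suffice.

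\emph{The direct expansion is not carried out.} You leave the expansion in $x,y$ at $a=1$ unexecuted. Positivity on the quadrant does not force positive coefficients, so ``should be positive after grouping'' is a hope rather than an argument.

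The idea you are missing is structural. Put $d=(b-c,c-a,a-b)$, and let $\gamma^{(k)}$ denote the $\gamma_{ij}$ whose pair is complementary to $k$; it depends only on $\lambda_k$, $d_k^2$ and $H$. Then $(x+y)^3\beta=\sum_k d_k^3\gamma^{(k)}$, and this expression is alternating under permutations of $(a,b,c)$. Over the symmetric denominator $D_1D_2D_3$, its numerator is therefore divisible by $d_1d_2d_3=xy(x+y)$. The quotient is symmetric, hence expressible in $H$, $t$, $w$. Computing it, the paper obtains
\[
\beta=\frac{16xyH^8}{81(x+y)^2D_1D_2D_3}\bigl\{66000t^2+12460t^3+4t^4+(25+42t)w^2+(2000t+1670t^2)(2t-w)\bigr\}.
\]
Every term is now visibly nonnegative, because $t>0$ and $2t-w=3H^{-3}\bigl(a(b-c)^2+b(c-a)^2+c(a-b)^2\bigr)>0$. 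The term $66000t^2>0$ gives strictness, so no concavity or boundary analysis is needed.
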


\begin{proof}
Using the expressions for $\gamma_{ij}$ in \eqref{eq:gamma-odd}
and the definitions of $t$ and $w$ in \eqref{eq:spectral-variables},
we obtain

\begin{equation*}
\begin{split}
 \beta={}&\frac{16xyH^8}{81(x+y)^2D_1D_2D_3}
 \bigl\{66000t^2+12460t^3+4t^4+(25+42t)w^2\\
 &\hspace{36mm}+(2000t+1670t^2)(2t-w)\bigr\}.
\end{split}
\end{equation*}
Since $t>0$ and
\[
2t-w=\frac{3}{H^3}
\left(a(b-c)^2+b(c-a)^2+c(a-b)^2\right)>0,
\]
every term in braces is nonnegative, and the term
$66000t^2$ is strictly positive.
Together with $x,y>0$ and $D_i>0$ for $i=1,2,3$,
this proves $\beta>0$.
\end{proof}

\Needspace{7\baselineskip}
\medskip
We next establish a boundary estimate for the quadratic form.

\begin{lemma}\label{lem:k3-manual}
Suppose that \(0<a<b<c\) and \(0<\rho\leq H^2/3\). Then \(G(1,1,-1)>0\).
\end{lemma}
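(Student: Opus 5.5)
Our plan is to evaluate $G(1,1,-1)$ explicitly as a rational function of the principal curvatures and $\rho$, and then prove its positivity by a concavity argument in $\rho$. At $(p,q,r)=(1,1,-1)$ the correction terms in \eqref{eq:direct-G} vanish, since $q+r=p+r=0$. Hence $G(1,1,-1)=E_0(q_1,q_2,q_3)$, where by \eqref{eq:recover-diagonal}
\[
\theta=\frac{y+x-(x+y)}{2xy(x+y)}=0,\qquad (q_1,q_2,q_3)=(1,1,1),\qquad M=3 .
\]
This point corresponds to $u_1=u_2=u_3=0$ and $u_4^2=1/2$, that is, to an array $\widehat h$ whose only nonzero component is $\widehat h_{123}$. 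This explains why the off-diagonal terms drop out. With $\mathbf q=(1,1,1)$ and $\theta=0$, the quantities $\xi_i$, $v_{ij}$, $\ell_{ij}$ and $L$ in \eqref{eq:direct-v}--\eqref{eq:direct-even} become explicit polynomials in $a,b,c,\rho$. The denominators $x+y$ and $y$ cancel, because $\xi_i$ is divisible by the corresponding differences once $q_i$ is constant. For example, $\xi_i=3\{(S-\rho)\lambda_i^2+(H\lambda_i^3-f_3\lambda_i)/2\}-C$. Hence
\[
E_0=K_\Omega\cdot 9+3C\sum_i\kappa_i+\sum_{i\le j}(m_{ij}v_{ij}^2+\ell_{ij}v_{ij})-\frac{L^2}{4m_0}.
\]

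Step one is to record the dependence on $\rho$. The quantities $C=S(S-\rho)$, $R_i$, $\xi_i$ and $\kappa_i$ are affine in $\rho$. Therefore $v_{ij}$ and $L$ are affine in $\rho$, and $E_0$ is a quadratic polynomial in $\rho$ whose leading coefficient is
\[
\sum_{i\le j} m_{ij}(\partial_\rho v_{ij})^2-\frac{(\partial_\rho L)^2}{4m_0}+(\text{terms from }C\kappa_i,\ C\,\ell_{ij}).
\]
We expect this coefficient to be nonpositive, so that $E_0$ is concave in $\rho$; this is the ``concavity argument'' announced in the introduction. We would verify it by computing $\partial_\rho^2E_0$ symbolically in the variables $t,w$ of \eqref{eq:spectral-variables}, after scaling $H=1$. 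If concavity holds on $0<\rho\le H^2/3$, it suffices to check the two endpoints: the value at $\rho\to0^+$, and the value at $\rho=H^2/3$.

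Step two is to check both endpoints. At $\rho=H^2/3$, and also at $\rho=0$, we would normalize $H=1$ and express $E_0\cdot m_0$ as a polynomial in $t\in(0,1)$ and $w$, with $|w|$ constrained by the discriminant condition for real roots. Note also that $aH\ge\rho$ is not assumed here, only $a>0$. The goal is to exhibit the result, as in Lemma~\ref{lem:beta}, as a combination of manifestly nonnegative terms, using $t>0$, $2t-w>0$, positivity of $a,b,c$, and $m_0>0$ from \eqref{eq:scalar-m0}, with at least one strictly positive term. Positivity of $a,b,c$ is encoded by $1-3t+2w$-type inequalities, namely $27abc/H^3>0$.

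The main obstacle will be the endpoint $\rho=H^2/3$, and possibly the concavity claim itself. There the cancellation between $K_\Omega$, the $C\kappa_i$ term and $-L^2/4m_0$ is delicate, and an SOS-type decomposition in $(t,w)$ may not be apparent. If a direct decomposition fails, we would fall back on writing the numerator as a polynomial in $w$ for fixed $t$, bounding its discriminant, and handling a finite number of sign checks by exact rational arithmetic, as the paper does for its coefficient inequalities in Appendix~\ref{app:polynomials}.
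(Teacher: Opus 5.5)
There is a genuine gap. Your Step one rests on $G(1,1,-1)=E_0(1,1,1)$ being concave in $\rho$, but it is strictly convex. Already at $(a,b,c)=(1,2,3)$, where $H=6$, $C=14(14-\rho)$ and $m_0=360$, one finds
$\xi=(11\rho-199,\,2\rho-64,\,263-13\rho)$, $\sum_i\kappa_i=126+18\rho$ and $L=28\rho-6272$.
The $\rho^2$-coefficients of $3C\sum_i\kappa_i$, $\sum m_{ij}v_{ij}^2$, $\sum\ell_{ij}v_{ij}$ and $-L^2/(4m_0)$ are $-756$, $1158$, $630$ and $-\tfrac{49}{90}$, totalling $\tfrac{92831}{90}>0$.

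In general, put $u=\frac13-\rho/H^2\ge0$. Then $G(1,1,-1)=H^7\{F_0+12uF_1+36u^2F_2\}/(3888t(5t-w))$, with $F_2>0$ on the whole admissible $(t,w)$-region, so $\partial_\rho^2G>0$ throughout. For a convex function, positivity at $\rho\to0^+$ and at $\rho=H^2/3$ says nothing about the interior, so checking your two endpoints does not prove the lemma. The ``concavity argument'' mentioned in the introduction is concavity in $w$, not in $\rho$.

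The $\rho$-dependence has to be handled by expanding about $\rho=H^2/3$ and proving $F_0$, $F_1$ and $F_2$ all positive. Then $G$ is decreasing in $\rho$ on $(0,H^2/3]$ and only the endpoint $\rho=H^2/3$ matters. You must, however, additionally prove $F_1>0$, i.e.\ $\partial_\rho G<0$ at $\rho=H^2/3$, which your plan never addresses.

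Step two is also missing its key idea. You only hope for a termwise nonnegative decomposition as in Lemma~\ref{lem:beta}, and none is apparent, because $F_0$ degenerates at the edge of the region. On $w=2t^{3/2}$, with $t=s^2$, one has $F_0=4s^4(1-s)P(s)\to0$ as $s\to1$ (the limit $a=b\to0$). What works is that $F_0$ and $F_1$ are strictly concave in $w$ for fixed $t$ on $\{|w|\le2t^{3/2},\ w>3t-1\}$. It therefore suffices to check the upper curve $w=2t^{3/2}$ and the lower curve, which is $w=-2t^{3/2}$ for $t\le\frac14$ and $w=3t-1$ for $t\ge\frac14$. On these curves, one-variable polynomial estimates finish the argument. $F_2$ yields to the crude bounds $w\ge-2t$ and $w^2\le4t^3$.

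Two smaller points, both harmless for $a<b<c$:
\begin{itemize}
\item Positivity of $abc$ reads $w-3t+1=27abc/H^3>0$, not a condition of type $1-3t+2w$.
\item $\xi_i$ is not divisible by $c-a$ or $c-b$. On the hyperplane $c=a$, for instance at $(a,b,c)=(1,2,1)$ with $\mathbf q=(1,1,1)$, one gets $\xi_1=3\rho-27\neq0$. So the individual $v_{ij}$ keep their denominators, and only the minimized $E_0$ is free of them, its denominator coming from $m_0$.
\end{itemize}
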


\begin{proof}
Equation~\eqref{eq:direct-G} gives \(G(1,1,-1)=E_0(1,1,1)\).
Thus \(M=3\) and \(\theta=0\) in \eqref{eq:direct-even}.
Use \(t,w\) from \eqref{eq:spectral-variables}, and put \(u=\frac13-\frac{\rho}{H^2}\geq0\).
Besides \(0<t<1\), we have
\begin{equation}\label{eq:k3-domain}
 |w|\leq2t^{3/2},\qquad w>3t-1.
\end{equation}
Indeed,
\[
  4t^3-w^2=\frac{27(a-b)^2(b-c)^2(c-a)^2}{H^6}\geq0, \qquad w-3t+1=\frac{27abc}{H^3}>0.
\]

In these variables,
\[
  S=\frac{H^2}{3}(1+2t),\qquad C=\frac{H^4}{9}(1+2t)(2t+3u).
\]
Also \(D_0=2H^2t\) and \(J_2=H^3(2t-w)/3\), so that
\[
  m_0=\frac43H^5t(5t-w),\qquad \sum_i\kappa_i=H^3(2-5t-w-3u).
\]
Substitution in \eqref{eq:direct-even} gives
\begin{equation}\label{eq:k3-three-polynomials}
 G(1,1,-1)=\frac{H^7\{F_0(t,w)+12uF_1(t,w)+36u^2F_2(t,w)\}}
              {3888t(5t-w)},
\end{equation}
where
\begin{align*}
 F_0={}&t^2(3500+65200t+65856t^2-66368t^3-23104t^4)\\
    &+tw(4300+12520t-14768t^2-8064t^3)\\
    &-w^2(3025+10616t+2672t^2-9472t^3)
      -(440+240t)w^3-16w^4,\\
 F_1={}&t^2(6320+14512t-8480t^2-3952t^3)\\
    &+tw(1566+1278t-1112t^2)
      -(935+964t-1256t^2)w^2-68w^3,\\
 F_2={}&t^2(2354+2752t-676t^2)+tw(638-64t)
      -(289+12t)w^2.
\end{align*}
The denominator is positive by \eqref{eq:k3-domain}.
Since \(638-64t>0\), the bounds \(w\geq-2t\) and
\(w^2\leq4t^3\) give
\begin{equation*}
\begin{aligned}
 F_2&\geq t^2(2354+2752t-676t^2)
             -2t^2(638-64t)-4t^3(289+12t)\\
    &=1078t^2+1724t^3-724t^4
     \geq1078t^2+1000t^3>0.
\end{aligned}
\end{equation*}

For fixed \(t\), the other two polynomials are strictly
concave in \(w\). Indeed, on the interval in
\eqref{eq:k3-domain},
\begin{align*}
 (F_0)_{ww}
 &=-6050-21232t-5344t^2+18944t^3
      -(2640+1440t)w-192w^2\\
 &\leq-6050-15952t-2464t^2+18944t^3\\
 &=-5522-(1-t)(528+16480t+18944t^2)<0,\\
 (F_1)_{ww}
 &=-1870-1928t+2512t^2-408w\\
 &\leq-1870-1112t+2512t^2
 =-470-(1-t)(1400+2512t)<0.
\end{align*}
It therefore suffices to check the endpoints
\[
 w=2t^{3/2},\qquad
 w=\begin{cases}
 -2t^{3/2},&0<t\leq1/4,\\
 3t-1,&1/4\leq t<1.
 \end{cases}
\]
The change in the lower endpoint occurs at \(t=1/4\), since \(2s^3+3s^2-1=(2s-1)(s+1)^2\) for \(s=\sqrt t\).
For the first two, write \(t=s^2\). Direct expansion gives
\[
  F_0(s^2,\pm2s^3)=4s^4(1\mp s)P(\pm s),\qquad F_1(s^2,\pm2s^3)=4s^4Q(\pm s),
\]
where
\begin{align*}
 P(s)={}&875+3025s+16300s^2+22560s^3+28408s^4\\
        &+20144s^5+880s^6-3632s^7,\\
 Q(s)={}&1580+783s+2693s^2+639s^3-3084s^4-692s^5+268s^6.
\end{align*}
For \(0<s<1\), we have $20144s^5-3632s^7\geq16512s^5>0$, and \(Q(s)\geq1580-1083s^2\geq497\).
For \(0<s\leq1/2\), using \(s^3\leq s^2/2\) and
\(s^5\leq s^4/2\), we obtain
\[
  P(-s)\geq875-3025s+5020s^2>0,\qquad Q(-s)\geq\frac{2377}{2}+\frac{3205}{2}s^2>0.
\]
The quadratic in the first inequality is positive because \(3025^2<4\cdot875\cdot5020\).
Finally,
\begin{align*}
 F_0(t,3t-1)={}&(1-t)
 (23104t^5+28416t^4+87456t^3+43640t^2-2895t-2601),\\
 F_1(t,3t-1)={}&-3952t^5-512t^4+1410t^3+10201t^2+2468t-867.
\end{align*}
When \(1/4\leq t<1\), we have \(t^3\geq t^2/4\), \(t\leq4t^2\), and \(1\leq16t^2\).
The polynomial in parentheses is therefore at least
\[
  (21864+43640-11580-41616)t^2=12308t^2.
\]
Furthermore,
\[
  F_1(t,3t-1)\geq5737t^2+2468t-867\geq\frac{1737}{16}>0.
\]
This shows \(F_0,F_1>0\) throughout \eqref{eq:k3-domain}.
Equation~\eqref{eq:k3-three-polynomials} now gives \(G(1,1,-1)>0\).
\end{proof}

\Needspace{7\baselineskip}
\medskip
It remains to verify the coefficient inequalities.

\begin{lemma}\label{lem:coefficients}
If $0<a<b<c$ and $0<\rho\leq aH$, then
the six coefficients of $G$ satisfy
\[
\begin{gathered}
c_{200},\,c_{020},\,c_{002},
\,c_{110},\,c_{101},\,c_{011}>0,\\[6pt]
2c_{200}+c_{110}-c_{101}>0,\qquad
c_{110}+2c_{020}-c_{011}>0,\\[6pt]
c_{200}+c_{110}+c_{020}
-c_{101}-c_{011}+c_{002}>0.
\end{gathered}
\]
\end{lemma}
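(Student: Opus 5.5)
The nine inequalities can be verified by direct computation with rational polynomials in the principal curvatures, after using the scaling symmetry of Section~\ref{sec:algebra} to normalize. By the rescaling $\lambda_i\mapsto t\lambda_i$, $\rho\mapsto t^2\rho$, each coefficient $c_{\alpha}$ is homogeneous in $(a,b,c,\sqrt\rho)$. Hence we may set $a=1$ and write $b=1+x$, $c=1+x+y$ with $x,y>0$. The hypothesis $\rho\le aH$ then reads $0<\rho\le 3+2x+y$. We also have $\rho\le H^2/3$, since $aH\le H^2/3$; this is the range in Lemma~\ref{lem:k3-manual}. Because $S-\rho>0$ is automatic here, $C>0$.

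The coefficients are computed by \eqref{eq:polarization}. We evaluate $G$ at the six points $(1,0,0),(0,1,0),(0,0,1),(1,1,0),(1,0,1),(0,1,1)$ via \eqref{eq:direct-G}. For this, $q_1,q_2,q_3$ are taken from \eqref{eq:recover-diagonal}, and $E_0$ is formed from \eqref{eq:direct-even} using $v_{ij}$ from \eqref{eq:direct-v}, $\ell_{ij}$, $m_0$ from \eqref{eq:scalar-m0}, and $\gamma_{12},\gamma_{23}$ from \eqref{eq:gamma-odd}.

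The combinations in the last two lines have a geometric meaning. $2c_{200}+c_{110}-c_{101}$ is (up to the quadratic terms) the derivative of $G$ along the boundary ray $r=-p$. $c_{200}+c_{110}+c_{020}-c_{101}-c_{011}+c_{002}=G(1,1,-1)$, whose positivity is already Lemma~\ref{lem:k3-manual}, so that inequality needs no new work. Together the nine inequalities are exactly what is needed to make $G$ positive on the cone $p,q\ge0$, $r\ge-\min\{p,q\}$. I would nonetheless still record all nine as rational functions.

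Each $c_\alpha$ is a rational function of $(x,y,\rho)$. Clearing the positive denominators leaves a polynomial numerator $N_\alpha(x,y,\rho)$. These denominators are products of powers of $x,y,x+y$, the $D_k$, $m_0$, and $t(5t-w)$-type factors, all positive by the earlier lemmas. The numerators are given in Appendix~\ref{app:polynomials}. To handle the dependence on $\rho$, I substitute $\rho=(3+2x+y)\sigma$ with $\sigma\in(0,1]$. Since $G$ depends on $\rho$ only through $C$, $R_i$ and $\kappa_i$, each $N_\alpha$ is a polynomial of low degree in $\sigma$ (at most quadratic, as in \eqref{eq:k3-three-polynomials}). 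The aim is to write each $N_\alpha$ in the Bernstein basis $\sigma^k(1-\sigma)^{n-k}$ with coefficients polynomial in $x,y$. In the same spirit, writing $x=\mu s$, $y=(1-\mu)s$ (or directly in $x,y$), the goal is to show that every coefficient has only nonnegative monomial coefficients, with at least one strictly positive term. This positivity of all monomial coefficients is verified by exact rational arithmetic.

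The main obstacle is that naive expansion may produce some negative monomials. When that happens I would fall back on one of two devices. The first is a termwise domination argument, as in the proof of Lemma~\ref{lem:k3-manual}: pair each negative monomial with larger positive ones, using $x,y>0$. The second is to switch to the scale-invariant variables $t,w$ together with $u=\frac13-\rho/H^2$, subject to the constraints \eqref{eq:k3-domain}. There $\rho\le aH$ gives an extra upper bound on $u$, namely $u\ge\frac13-a/H$. One then exploits concavity in $w$ to reduce to the boundary curves $w=\pm2t^{3/2}$ and $w=3t-1$, and checks one-variable polynomials in $s=\sqrt t$. I expect the mixed coefficients $c_{110}$, $c_{101}$, $c_{011}$, and the two edge combinations, to be the delicate cases, because there the $\gamma_{ij}$ contributions and the $-L^2/4m_0$ term nearly cancel.
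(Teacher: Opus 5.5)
Your plan matches the paper's proof. You normalize $a=1$, $b=1+x$, $c=1+x+y$, get the last inequality as $G(1,1,-1)>0$ from Lemma~\ref{lem:k3-manual} (using $\rho\le aH<H^2/3$), and certify the other eight by clearing positive denominators and checking coefficient signs in exact rational arithmetic. The only difference is how $\rho$ is parametrized: the paper sets $\rho=H-z$ and finds every monomial coefficient of the numerators in $\mathbb Q[x,y,z]$ positive for all $z\ge0$, with a positive $z$-free term; since $z=H(1-\sigma)$ with $H=3+2x+y$, this gives your Bernstein certificate in $\sigma=\rho/H$ directly, so the fallback devices you describe are never needed.
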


\begin{proof}
Since $a<H/3$ and $\rho\leq aH$, we have
$\rho<H^2/3$. Thus the last inequality follows from
Lemma~\ref{lem:k3-manual}.

To prove the remaining eight inequalities, consider the
rescaling
\begin{equation}\label{eq:scaling}
\begin{aligned}
h_{ij}&\mapsto a^{-1}h_{ij},&
h_{ijk}&\mapsto a^{-2}h_{ijk},\\
h_{ijkl}&\mapsto a^{-3}h_{ijkl},&
\rho&\mapsto a^{-2}\rho.
\end{aligned}
\end{equation}
This rescaling preserves the hypotheses of
Proposition~\ref{prop:positivity} and sends $J$ to
$a^{-7}J$. Each coefficient in \eqref{eq:G-form}
is multiplied by the same positive factor $a^{-7}$.
Hence their signs are unchanged, and we may assume
\begin{equation}\label{eq:xyz}
\begin{gathered}
a=1,\qquad b=1+x,\qquad c=1+x+y,\\
H=3+2x+y,\qquad \rho=H-z,
\end{gathered}
\end{equation}
where $x,y>0$ and $0\leq z<H$.
Since
\[
S-H=2x+y+2x^2+2xy+y^2>0,
\]
we have $S>\rho$ throughout this domain.

By \eqref{eq:direct-even}, \eqref{eq:direct-G},
and \eqref{eq:polarization}, the eight quantities
can be expressed as rational functions of $x,y,z$.
After clearing their positive denominators,
the resulting numerators are polynomials whose
nonzero coefficients are all positive.
Moreover, each numerator contains a positive term
independent of $z$, ensuring strict positivity
also when $z=0$.

The exact polynomial expansions and coefficient
verifications are described in
Appendix~\ref{app:polynomials}.
This proves the remaining eight inequalities.
\end{proof}

\Needspace{7\baselineskip}
\medskip
We now treat the case of repeated principal curvatures.\label{subsec:repeated}

\begin{lemma}\label{lem:repeated}
Under the hypotheses of Proposition~\ref{prop:positivity},
if  exactly two principal curvatures coincide, then \(\mathcal Q>0\).
\end{lemma}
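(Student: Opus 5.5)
The plan is to redo Steps 1--2 directly in the repeated case rather than take a limit of the distinct case, since the formulas of Lemma~\ref{lem:gram} and \eqref{eq:gamma-odd} contain denominators $d_k$ and $\delta$ that degenerate.

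\textbf{Setup.} There are two subcases: $a=b<c$ and $a<b=c$. I would handle them uniformly by writing the eigenvalues as $(\mu,\mu,\nu)$ with $\mu\ne\nu$, after relabeling indices. The relabeling is harmless because $J$ and the constraints are symmetric under simultaneous permutation of indices. Then $d$ is proportional to $(1,-1,0)$, so by \eqref{eq:T-param} we have
\[
\widehat h_{11k}=-\widehat h_{22k}=u_k,\qquad \widehat h_{33k}=0,\qquad \widehat h_{123}=u_4,
\]
up to the normalization of $d$. This gives $q_i$, $\widehat Q_{ij}$, and $\widehat\Phi$ explicitly as quadratics in $u_1,\dots,u_4$.

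\textbf{Odd components and rotational symmetry.} The $(e_1,e_2)$-plane carries an $O(2)$ symmetry, and I would exploit it first. For the pair $(1,2)$ we have $\delta=0$, so the two trace equations for $(V_{1112},V_{2212},V_{3312})$ become $V_{1112}+V_{2212}+V_{3312}=0$ and $\mu(V_{1112}+V_{2212})+\nu V_{3312}=-C\widehat Q_{12}$. These force $V_{3312}=-C\widehat Q_{12}/(\nu-\mu)$ and leave $V_{1112}-V_{2212}$ free. For the pairs $(1,3)$ and $(2,3)$ the formulas of Step 2 apply with $\delta=\pm(\nu-\mu)\ne0$ and $\lambda_k=\mu$, so \eqref{eq:gamma-odd} is valid there. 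In the even block, Lemma~\ref{lem:rank} again gives rank five, so there is one free even parameter. Completing squares as in Step 2 yields $\mathcal Q$ as an explicit quartic in $u$.

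Rotating the frame in the $(e_1,e_2)$-plane, I would further normalize, e.g.\ $u_4=0$ or $u_1u_2=0$, to reduce the number of variables.

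\textbf{Positivity.} Scaling as in \eqref{eq:scaling}, I would normalize $\mu=1$ and set $\nu=1+y$ with $y>-1$, $y\ne0$, and $\rho=H-z$ with the constraint $\rho\le aH$, where $a=\min(\mu,\nu)$. The reduced $\mathcal Q$ should be a quadratic form in the few nonnegative monomials $u_k^2$, $u_4^2$, i.e.\ in cone coordinates analogous to \eqref{eq:cone-coordinates}. I would then show its coefficients are positive polynomials in the remaining variables, as in Lemma~\ref{lem:coefficients}, and cite the appendix computation for the exact expansions.

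\textbf{Expected obstacle.} I expect the main obstacle to be the subcase $a<b=c$, i.e.\ $\nu<\mu$, where the constraint $\rho\le\nu H$ is tighter and the signs of the linear terms are less favorable. There I would try a concavity-in-one-variable argument as in Lemma~\ref{lem:k3-manual} if plain coefficient positivity fails. A cross-check is available: the result should agree with the $x\to0$ or $y\to0$ limit of $G$ and $\beta$. Since those are continuous, this shows at least $\mathcal Q\ge0$, and strictness must come from the leading term that survives in the limit.
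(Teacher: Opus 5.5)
Your plan is essentially the paper's proof. You recompute Steps 1--2 directly for eigenvalues $(\lambda,\lambda,\mu)$ and complete the four squares group by group; your use of \eqref{eq:gamma-odd} for the two pairs with $\delta\neq0$ agrees with the paper's direct value $5C^2\delta^2(13\lambda+\mu)P_1P_2$. You then use the rotation symmetry of the repeated eigenspace and conclude from explicitly positive polynomial coefficients. The paper handles the two points you leave open more efficiently. First, the cubic and quadratic parts have rotation weights $3$ and $2$, so no mixed invariant exists in degree four; hence $\mathcal Q=\alpha P_1^2+\gamma P_1P_2+\beta P_2^2$, which can be computed on a reflection-symmetric two-parameter family where the $(1,2)$ and $(2,3)$ odd groups drop out. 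Second, the paper normalizes the \emph{smallest} curvature to $1$, i.e.\ $(a,b,c)=(1,1,1+h)$ or $(1,1+h,1+h)$ with $h>0$ and $z=H-\rho\geq0$. Then $\alpha_\pm,\beta_\pm,\gamma_\pm$ have only positive coefficients (up to positive denominators), so the obstacle you foresee for $a<b=c$ comes only from normalizing the repeated value, and no concavity argument is needed.
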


\begin{proof}
By \eqref{eq:scaling}, we may normalize the smallest
principal curvature to $1$. Since $\rho\leq aH$,
we may write $z=H-\rho$, where $0\leq z<H$.

After relabeling, put
\[
(\lambda_1,\lambda_2,\lambda_3)
=(\lambda,\lambda,\mu),
\qquad
\delta=\mu-\lambda\neq0.
\]
The first two trace conditions in
\eqref{eq:algebra-T} imply
\[
\widehat h_{33k}=0,\qquad
\widehat h_{11k}+\widehat h_{22k}=0.
\]
Thus the cubic form associated with
$\widehat h_{ijk}$ decomposes into a harmonic cubic
on the repeated eigenspace and the transverse
coordinate multiplied by a harmonic quadratic
on that eigenspace.

The constraints and the quadratic expression
\eqref{eq:quadratic} are invariant under rotations
in the repeated eigenspace. Hence $\mathcal Q$
is rotation invariant. Under such rotations,
the two pairs of coefficients transform with
weights $3$ and $2$, respectively. Put
\[
P_1=
\frac{\widehat h_{111}^2+\widehat h_{112}^2}{\delta^2},
\qquad
P_2=
\frac{\widehat h_{113}^2+\widehat h_{123}^2}{\delta^2}.
\]
For fixed $\lambda,\mu,\rho$, completing the four squares shows
that $\mathcal Q$ is a homogeneous polynomial of degree four in
the components $\widehat h_{ijk}$. Hence no mixed angular
invariant can occur. Indeed,
such an invariant would require
$3m=2n$ for nonzero integers $m,n$ with
$|m|+|n|\leq4$, which is impossible.
Consequently, $\mathcal Q$ is a linear combination
of $P_1^2$, $P_1P_2$, and $P_2^2$.

It is therefore enough to calculate $\mathcal Q$
on the family of fully symmetric arrays determined by
\[
\begin{aligned}
\widehat h_{111}=-\delta u,\quad
\widehat h_{122}=\delta u,\quad
\widehat h_{113}=-\delta v,\quad
\widehat h_{223}=\delta v,\quad
\end{aligned}
\]
with all other independent components equal to zero.
For this family, $P_1=u^2$, $P_2=v^2$, and
\[
\begin{aligned}
M&=\delta^2(4P_1+6P_2),\\
q_1=q_2&=2\delta^2(P_1+P_2),
q_3&=2\delta^2P_2,
\widehat Q_{13}&=2\delta^2uv.
\end{aligned}
\]
The other off-diagonal entries of $\widehat Q$ vanish.

By the definition of $\widehat\Phi$, we obtain
\[
\begin{alignedat}{3}
\widehat\Phi_{1111}=\widehat\Phi_{2222}
&=\delta^2(P_1+P_2),
&\quad \widehat\Phi_{1122}
&=\frac{\delta^2}{3}(P_1-P_2),
&\quad\quad  \widehat\Phi_{1113}
&=\delta^2uv,\\[6pt]
\widehat\Phi_{1133}=\widehat\Phi_{2233}
&=\frac{2\delta^2}{3}P_2,
&\quad \widehat\Phi_{3333}=\widehat\Phi_{1333}
&=0,
&\quad \widehat\Phi_{1223}
&=\frac{\delta^2}{3}uv.
\end{alignedat}
\]
The remaining odd components vanish.
In this frame,
$
w_1=w_2=\lambda+\frac H2,
w_3=\mu+\frac H2.
$
Set
\[
\begin{aligned}
t_0&=\frac{M}{\delta}
\left\{
(S-\rho)\lambda^2
+\frac{H\lambda^3-f_3\lambda}{2}
\right\}
-2C\delta(P_1+P_2),\\
t_1&=MR_1-t_0,\quad t_2=MR_3-2t_0.
\end{aligned}
\]
The even trace equations give
\[
\begin{aligned}
V_{1111}=V_{2222}&=t_1-V_{1122},\\
V_{1133}=V_{2233}&=t_0,\quad
V_{3333}=t_2.
\end{aligned}
\]
Thus $V_{1122}$ is the only free even component.

Substituting these expressions into
\eqref{eq:quadratic} and using the coefficient
formulas given above, the terms involving
$V_{1122}$ become
\[
\begin{aligned}
&8w_1V_{1122}^2
-4(w_1t_1-3C\delta^2P_2)V_{1122}\\
&\quad=
8w_1\left(
V_{1122}
-\frac{w_1t_1-3C\delta^2P_2}{4w_1}
\right)^2
-\frac{(w_1t_1-3C\delta^2P_2)^2}{2w_1}.
\end{aligned}
\]
Since $w_1>0$, the even contribution has minimum
\[
\begin{aligned}
\mathcal Q_{\mathrm e}
={}&2w_1t_1^2+w_3t_2^2
+6(w_1+w_3)t_0^2-6C\delta^2(P_1+P_2)t_1\\
&-6\delta^2(4CP_2+M\lambda\mu)t_0-\frac{(w_1t_1-3C\delta^2P_2)^2}{2w_1}.
\end{aligned}
\]

Only the odd group corresponding to the pair
$(1,3)$ contributes. Its trace equations are
\[
\begin{aligned}
V_{1333}&=-2C\delta uv,\quad
V_{1113}+V_{1223}&=2C\delta uv.
\end{aligned}
\]
The corresponding quadratic expression is
\[
\begin{aligned}
&(7\lambda+3\mu)
\left(V_{1113}^2+3V_{1223}^2\right)
+4C^2\delta^2P_1P_2
\left\{5(\lambda+\mu)-6\delta\right\}.
\end{aligned}
\]
Using the identity
\[
\begin{aligned}
V_{1113}^2+3V_{1223}^2
={}&\frac34(V_{1113}+V_{1223})^2+\frac14(V_{1113}-3V_{1223})^2\\
={}&\frac34(2C\delta uv)^2+\frac14(V_{1113}-3V_{1223})^2,
\end{aligned}
\]
we obtain the minimum
$
5C^2\delta^2(13\lambda+\mu)P_1P_2,
$
attained when $V_{1113}=3V_{1223}$.
For the other two odd groups, the trace data
and linear terms vanish, so their minima are zero.

Combining the preceding calculations gives
\begin{equation}\label{eq:repeated-scalar-formula}
\begin{aligned}
\mathcal Q
={}&\frac32(w_1+w_3)
\lambda^2\mu^2\delta^2M^2+2CM\delta^2
\left\{
2\kappa_1(P_1+P_2)+\kappa_3P_2
\right\}\\
&+\mathcal Q_{\mathrm e}+5C^2\delta^2(13\lambda+\mu)P_1P_2.
\end{aligned}
\end{equation}

Put $h=|\mu-\lambda|>0$.
We consider the two possible cases
$
(\lambda,\mu)=(1,1+h)$
 and
$(\lambda,\mu)=(1+h,1),
$
corresponding respectively to
$
(a,b,c)=(1,1,1+h)
$ and $
(a,b,c)=(1,1+h,1+h).
$
In both cases,
\[
M=h^2(4P_1+6P_2),
\]
and expanding \eqref{eq:repeated-scalar-formula}
yields
\[
\mathcal Q
=\alpha_\pm P_1^2
+\gamma_\pm P_1P_2
+\beta_\pm P_2^2.
\]

The explicit expressions for these coefficients
are given in
Appendix~\ref{app:polynomials},
Subsection~\ref{app:repeated-polynomials}.
By \eqref{eq:alpha-minus}--\eqref{eq:gamma-plus},
we have
\[
\begin{alignedat}{3}
\alpha_-&\geq 1575h^4,
&\qquad \beta_-&\geq \frac{7875h^4}{h+5},
&\qquad \gamma_-&\geq 3780h^4,\\[6pt]
\alpha_+&\geq 1728h^5,
&\qquad \beta_+&\geq \frac{7875h^4}{4h+5},
&\qquad \gamma_+&\geq 2520h^4.
\end{alignedat}
\]
All six lower bounds are strictly positive
for $h>0$. Since $P_1,P_2\geq0$ and
$P_1+P_2>0$, we conclude that
$\mathcal Q>0$ in both cases.
\end{proof}

We first consider the case $a<b<c$.
By \eqref{eq:cone-coordinates}, we have
$p,q\geq0$ and $(p,q,r)\neq(0,0,0)$.
If $r\geq0$, Lemma~\ref{lem:coefficients} shows
that all coefficients in \eqref{eq:G-form} are
strictly positive. Hence $G>0$.

If $r<0$, we rewrite \eqref{eq:G-form} as
\[
\begin{aligned}
G={}&c_{200}(p+r)^2
+c_{110}(p+r)(q+r)
+c_{020}(q+r)^2\\
&-(2c_{200}+c_{110}-c_{101})r(p+r)-(c_{110}+2c_{020}-c_{011})r(q+r)\\
&+(c_{200}+c_{110}+c_{020}
-c_{101}-c_{011}+c_{002})r^2.
\end{aligned}
\]
Since $p+r,q+r\geq0$ and $r<0$,
Lemma~\ref{lem:coefficients} implies that
every term on the right-hand side is nonnegative,
and the last term is strictly positive.
Thus $G>0$ also when $r<0$.

Combining this with Lemma~\ref{lem:beta}
and \eqref{eq:Q-cone-reduction}, we obtain
$\mathcal Q>0$. Since
$\mathcal Q=M^2\min J$ and $M>0$,
it follows that $\min J>0$, and hence $J>0$.

If exactly two principal curvatures coincide,
Lemma~\ref{lem:repeated} gives $\mathcal Q>0$,
which again implies $J>0$.
This completes the proof of
Proposition~\ref{prop:positivity}.

\Needspace{7\baselineskip}
\section{Proof of the main theorem}\label{sec:classification}

We now combine the results of the preceding sections
to prove Theorem~\ref{thm:main}. In particular,
Propositions~\ref{prop:limit} and
\ref{prop:positivity}, together with the identities
established in Section~\ref{sec:identities}, will be used to exclude
the possibility $S>1$.

Suppose that $S>1$. By Proposition~\ref{prop:limit}, the limiting
principal curvatures satisfy
\[
0<a\leq b\leq c,\qquad aH\geq1.
\]
If they are not all equal,
Proposition~\ref{prop:positivity} applies with
$\rho=1$ and yields $J>0$.
On the other hand, as established in
Section~\ref{sec:algebra}, these limiting quantities
satisfy $J=0$, a contradiction.

If all three principal curvatures are equal, then
$\lambda_i^2=S/3$ for each $i$. Equation~\eqref{eq:bochner1} then gives
\[
  \sum_{i,j,k,l}h_{ijkl}^2 =S(S-1)(S-2)-S^2(S-1) =-2S(S-1)<0.
\]
Thus \(S>1\) is impossible.

If \(S=0\), the immersion is totally geodesic, and
\(X(M)\) lies in an affine hyperplane. The self-shrinker
equation forces this hyperplane to pass through the origin.
Since the immersion is a local isometry and $M$ is complete,
$X(M)$ is the whole hyperplane.

Suppose that \(S=1\). Equation~\eqref{eq:nablah} gives
\(\nabla A=0\). Hence the principal curvatures are constant
and the corresponding eigendistributions are parallel. Two distinct
eigendistributions have zero mixed sectional curvature, so
the Gauss equation gives \(\lambda\mu=0\) for any two
distinct principal curvatures \(\lambda,\mu\).
There is therefore one nonzero principal curvature
\(\lambda\), of multiplicity \(k\), and
\[
  k\lambda^2=1,\qquad H=k\lambda,\qquad H^2=k,\qquad R=k-1.
\]

The distribution \(\ker A\) is parallel and determines a
fixed \((3-k)\)-dimensional subspace of \(\RR^4\).
Indeed, for a section \(v\) of \(\ker A\) and a tangent
vector field \(u\),
\[
  \bar\nabla_u(dX(v))=dX(\nabla_uv)+h(u,v)\xi =dX(\nabla_uv)\in dX(\ker A).
\]
Parallel transport in \(\ker A\) thus agrees with ambient
parallel transport. After an orthogonal change of coordinates,
this fixed subspace is the second factor of \(\RR^{k+1}\times\RR^{3-k}\).
Since \(H\) is constant, \eqref{eq:Hderivatives} gives
\(AX^\top=0\). Thus \(X^\top\) lies in the second factor,
whereas \(\xi\) lies in the first. The equation \(X=X^\top-H\xi\) shows that the projection of \(X\)
onto the first factor has length \(|H|=\sqrt{k}\). Hence $X(M)$ is
contained in $\SSS^k(\sqrt{k})\times\RR^{3-k}$, and $X$, regarded as a
map into this product, is a local isometry. Since $M$ is complete and
the product is connected, geodesic continuation shows that this map
is surjective. Consequently, $X(M)$ is isometric to
$\SSS^k(\sqrt{k})\times\RR^{3-k}$.

If $M$ is not orientable, we pass to its oriented
double cover $\pi:\widetilde M\to M$ and consider
the lifted immersion $\widetilde X=X\circ\pi$.
Since $\widetilde X(\widetilde M)=X(M)$, the same
description of the immersed image holds for the
original immersion.

This completes the proof of Theorem~\ref{thm:main}.
\begin{remark}
	After completing this work, we became aware of a recent arXiv preprint by S. Guan and Y. Zhao \cite{GuanZhao2026}, in which the same result was obtained. However, our paper use different approach to establish the key algebraic inequality.
\end{remark}
\Needspace{12\baselineskip}
\appendix
\section{Polynomial coefficient formulas}\label{app:polynomials}

We collect the polynomial expressions used in
Section~\ref{sec:positive-minimum}. The first subsection concerns
distinct principal curvatures. The second gives the coefficients
for two equal principal curvatures. A complete rational
calculation is included at the end.

\subsection{Eight positive numerator polynomials}\label{app:positive-numerators}

Use \eqref{eq:xyz}, with \(x,y>0\) and \(z\geq0\), and put
\begin{equation*}
\begin{aligned}
 g&=x+y,\qquad q_0=x^2+xy+y^2,\\
 p_0&=4x^3+6x^2y+5x^2+4xy^2+5xy+y^3+5y^2,\\
 p_{12}&=7x^3+7x^2y+10x^2+4xy^2+5xy+y^3+5y^2,\\
 p_{23}&=4x^3+6x^2y+5x^2+8xy^2+5xy+3y^3+10y^2.
\end{aligned}
\end{equation*}
All these factors are positive. Write
\[
  k_1=2c_{200}+c_{110}-c_{101},\qquad k_2=c_{110}+2c_{020}-c_{011}.
\]
The positivity of \(G(1,1,-1)\) was proved in
Lemma~\ref{lem:k3-manual}.

For each function
\(f\in\{c_{200},c_{110},c_{101},c_{020},c_{011},c_{002},k_1,k_2\}\),
define \(N_f=D_f f\), where \(f\) is determined by
\eqref{eq:direct-even}, \eqref{eq:direct-G}, and
\eqref{eq:polarization}. The positive denominators are given by
\[
\begin{alignedat}{2}
 D_{c_{200}}&=16x^2g^2q_0p_0,
 &\qquad D_{c_{110}}&=8xyg^2q_0p_0p_{12}p_{23},\\[3pt]
 D_{c_{101}}&=8x^2ygq_0p_0p_{12},
 &\qquad D_{c_{020}}&=16y^2g^2q_0p_0,\\[3pt]
 D_{c_{011}}&=8xy^2gq_0p_0p_{23},
 &\qquad D_{c_{002}}&=16x^2y^2q_0p_0,\\[3pt]
 D_{k_1}&=8xgq_0p_0p_{23},
 &\qquad D_{k_2}&=8ygq_0p_0p_{12}.
\end{alignedat}
\]
Exact division shows that \(N_f\in\mathbb Q[x,y,z]\).
Every nonzero coefficient of each \(N_f\) is positive.
In the order given above, the numerators have respectively
\(273,498,381,269,377,275,321,321\) nonzero terms,
for a total of \(2715\).
Retaining one positive term independent of \(z\) in each numerator gives
\[
\begin{alignedat}{2}
 N_{c_{200}}&\geq31500x^8,
 &\qquad N_{c_{110}}&\geq1260000x^{12},\\[3pt]
 N_{c_{101}}&\geq252000x^{10},
 &\qquad N_{c_{020}}&\geq34560x^9,\\[3pt]
 N_{c_{011}}&\geq172800x^{11},
 &\qquad N_{c_{002}}&\geq34560x^9,\\[3pt]
 N_{k_1}&\geq157500x^8,
 &\qquad N_{k_2}&\geq252000x^8.
\end{alignedat}
\]
Hence each numerator is strictly positive for \(x,y>0\), including \(z=0\).

The numerator signs hold for all \(z\geq0\), a larger range
than the geometric range \(0\leq z<H\). The factors \(x\) and
\(y\) in the denominators arise from the coordinates in
\eqref{eq:recover-diagonal}. These fractions are used only
for distinct eigenvalues. Lemma~\ref{lem:repeated} treats
the two repeated-eigenvalue cases directly.

The ancillary file \path{check_coefficients.py} evaluates
\eqref{eq:direct-even}, \eqref{eq:direct-G}, and
\eqref{eq:polarization} in \(\mathbb Q(x,y,z)\). It multiplies
the eight resulting fractions by the denominators given above,
performs exact polynomial
division, and checks every coefficient. The complete
expansions are supplied in \path{polynomial_coefficients.json}.
The \(2715\) nonzero coefficients are all positive.
The accompanying instructions describe how to regenerate
and compare them. The tensor identities and the repeated-eigenvalue formulas
are also checked independently in the accompanying files.
These eight general numerator signs are the part of the proof
that uses computer assistance.

\subsection{Two equal principal curvatures}\label{app:repeated-polynomials}

The following expressions are obtained from
\eqref{eq:repeated-scalar-formula}. The parameters \(u_i\)
refer to \eqref{eq:T-param}. The two quadratic expressions
\(P_1,P_2\) are the invariant squared norms used in
Lemma~\ref{lem:repeated}.

To display the resulting polynomials, write
$
  [c_0,c_1,\ldots,c_m]_h=\sum_{j=0}^m c_jh^j.
$
Since every list below has positive entries, the corresponding polynomial is positive
for \(h>0\).

Suppose first that $(a,b,c)=(1,1,1+h)$, $h>0$.
Put
$ P_1=u_1^2+u_2^2$, $ P_2=u_3^2+u_4^2/h^2$.
Then
\[
  M=h^2(4P_1+6P_2),\quad \mathcal Q=\alpha_-P_1^2+\beta_-P_2^2+\gamma_-P_1P_2.
\]
Here $H=3+h$, $S=3+2h+h^2$,
$C=S\{h(h+1)+z\}$,
\begin{equation}\label{eq:alpha-minus}
\begin{aligned}
\alpha_-&=h^{2}\bigl\{z^2[175,599,862,606,223,23]_h\\
 &\qquad +2zh[525,2380,3557,2944,1355,324,19]_h\\
 &\qquad +h^2[1575,7161,12751,12353,7225,2439,417,15]_h\bigr\}.
\end{aligned}
\end{equation}
\begin{equation}\label{eq:beta-minus}
\begin{aligned}
\beta_-&=\frac{h^{4}}{h + 5}\bigl\{2z^2[1177,1782,1086,302,27]_h\\
 &\qquad +4z[4740,11440,12277,7329,2387,365,18]_h\\
 &\qquad +[7875,46410,102551,119248,82295,34186,7925,820,18]_h\bigr\}.
\end{aligned}
\end{equation}
\begin{equation}\label{eq:gamma-minus}
\begin{aligned}
\gamma_-&=h^{2}\bigl\{z^2[630,1501,2176,1622,642,89]_h\\
 &\qquad +2zh[1155,5804,9305,8000,3845,956,71]_h\\
 &\qquad +h^2[3780,18057,33746,33975,20400,7075,1234,53]_h\bigr\}.
\end{aligned}
\end{equation}
Suppose next that $(a,b,c)=(1,1+h,1+h)$, $h>0$.
Put
$
  P_1=u_2^2+u_3^2$, $P_2=u_1^2+u_4^2/h^2.
$
Then
\[
  M=h^2(4P_1+6P_2),\qquad \mathcal Q=\alpha_+P_1^2+\beta_+P_2^2+\gamma_+P_1P_2.
\]
Here $H=3+2h$, $S=3+4h+2h^2$,
$C=S\{2h(h+1)+z\}$,
\begin{equation}\label{eq:alpha-plus}
\begin{aligned}
\alpha_+&=h^{2}\bigl\{z^2[175,276,216,136,76,32]_h\\
 &\qquad +8zh^2[102,417,714,640,304,64]_h\\
 &\qquad +16h^3[108,534,1125,1302,880,333,56]_h\bigr\}.
\end{aligned}
\end{equation}
\begin{equation}\label{eq:beta-plus}
\begin{aligned}
\beta_+&=\frac{h^{4}}{4 h + 5}\bigl\{2z^2[1177,2926,2802,1232,206]_h\\
 &\qquad +4z[4740,20531,37309,36437,20062,5852,700]_h\\
 &\qquad +[7875,73470,261287,493488,556860,389592,165840,39264,3952]_h\bigr\}.
\end{aligned}
\end{equation}

\begin{equation}\label{eq:gamma-plus}
\begin{aligned}
\gamma_+&=h^{2}\bigl\{z^2[630,1649,2472,2200,1072,236]_h\\
 &\qquad +2zh[735,5081,13104,17264,12656,4988,856]_h\\
 &\qquad +4h^2[630,4857,14794,24035,23044,13256,4300,620]_h\bigr\}.
\end{aligned}
\end{equation}

Every coefficient in these six expressions is positive.
Retaining the lowest term independent of \(z\) gives the
lower bounds used in Lemma~\ref{lem:repeated}.

\subsection{A rational calculation}

Take \((a,b,c,\rho)=(1,2,3,1)\). Then
\[
  H=6,\qquad S=14,\qquad C=182,\qquad x=y=1,\qquad d=(-1,2,-1),
\]
and \(K_\Omega=594,\qquad \kappa=(-96,114,126)^t\).
To calculate \(c_{200}\), put \((p,q,r)=(1,0,0)\).
Equation~\eqref{eq:recover-diagonal} gives
\[
  \mathbf q=(3/2,2,1/2)^t,\qquad M=4,\qquad\theta=1/4.
\]
The six components in \eqref{eq:direct-even} are given by
\[
\begin{aligned}
 \left(m_{11},n_{11},v_{11},\ell_{11}\right)
  &=\left(4,-1,\frac{209}{2},-\frac{273}{2}\right),\\[3pt]
 \left(m_{22},n_{22},v_{22},\ell_{22}\right)
  &=\left(5,-4,108,-546\right),\\[3pt]
 \left(m_{33},n_{33},v_{33},\ell_{33}\right)
  &=\left(6,-1,\frac{329}{2},-\frac{273}{2}\right),\\[3pt]
 \left(m_{12},n_{12},v_{12},\ell_{12}\right)
  &=\left(27,2,0,-1662\right),\\[3pt]
 \left(m_{13},n_{13},v_{13},\ell_{13}\right)
  &=\left(30,-1,-\frac{281}{2},-963\right),\\[3pt]
 \left(m_{23},n_{23},v_{23},\ell_{23}\right)
  &=\left(33,2,-204,474\right).
\end{aligned}
\]
In particular,
\[
  m_0=\sum_{i\leq j}m_{ij}n_{ij}^2 =4+80+6+108+30+132=360.
\]
The odd contribution in \eqref{eq:direct-G} vanishes at this
point. The four scalar quantities in \eqref{eq:direct-even} are given by
\begin{align*}
 K_\Omega M^2+CM\sum_i\kappa_iq_i&=116520,&
 \sum_{i\leq j}m_{ij}v_{ij}^2&=2229898,\\
 \sum_{i\leq j}\ell_{ij}v_{ij}&=-57081,&
 L&=-24584.
\end{align*}
It follows that
\[
  c_{200}(1,2,3,1) =116520+2229898-57081-\frac{24584^2}{1440} =\frac{84133507}{45}>0.
\]
The general positivity of \(c_{200}\) follows from the
numerator described in Subsection~\ref{app:positive-numerators}.

\end{document}